\title{Ramsey theory for layered semigroups}
\author{Jordan Mitchell Barrett\\
\small School of Mathematics and Statistics\\[-0.8ex]
\small Victoria University of Wellington\\[-0.8ex] 
\small New Zealand\\
\small\tt \randomize{math@jmbarrett.nz}}
\setlist[enumerate,1]{label={(\roman*)}}
\setlist[enumerate,2]{label={(\alph*)}}
\newcommand{\N}{\mathbb{N}}			
\newcommand{\V}{\mathbb{V}}
\newcommand{\F}{\mathcal{F}}
\newcommand{\M}{\mathcal{M}}
\newcommand{\Nn}{\mathcal{N}}
\newcommand{\U}{\mathcal{U}}		
\newcommand{\h}{{}^*}				
\newcommand{\kh}[1]{{}^{#1*}}		
\newcommand{\g}{{}^\gamma}
\theoremstyle{definition}
\newtheorem{axiom}[theorem]{Axiom}
\newtheorem{axioms}[theorem]{Axioms}
\newcommand{\abs}[1]{{\left\lvert #1 \right\rvert}}		
\newcommand{\ct}{{}^\frown}								
\newcommand{\defeq}{
	\mathrel{\mathrel{\mathop:}\mkern-1.2mu=}}			
\newcommand{\FIN}{\mathrm{FIN}}
\newcommand{\floor}[1]{{\left\lfloor #1 \right\rfloor}}		
\newcommand{\om}{\overline{\omega}}
\DeclareMathOperator{\dom}{dom}				
\DeclareMathOperator{\id}{id}				
\DeclareMathOperator{\FS}{FS}
\DeclareMathOperator{\Pow}{\mathcal{P}}		
\DeclareMathOperator{\range}{range}			
\DeclareMathOperator{\supp}{supp}			
\newcommand{\hA}{\h\hspace{-0.25em}A}
\newcommand{\hB}{\h\hspace{-0.15em}B}
\newcommand{\hK}{\h\hspace{-0.15em}K}
\newcommand{\hM}{\h\hspace{-0.15em}M}
\newcommand{\hS}{\h\hspace{-0.1em}S}
\newcommand{\hf}{\h\hspace{-0.2em}f}
\newcommand{\halpha}{\h\hspace{-0.15em}\alpha}
\newcommand{\hbeta}{\h\hspace{-0.15em}\beta}
\newcommand{\gA}{\g\hspace{-0.25em}A}
\newcommand{\gL}{\g\hspace{-0.15em}L}
\newcommand{\gS}{\g\hspace{-0.15em}S}
\newcommand{\khalpha}[1]{\kh{#1}\hspace{-0.15em}\alpha}
\begin{document}

\maketitle

\vspace{3mm}
\begin{abstract}
    We further develop the theory of \textit{layered semigroups}, as introduced by Farah, Hindman and McLeod, providing a general framework to prove Ramsey statements about such a semigroup $S$. By nonstandard and topological arguments, we show Ramsey statements on $S$ are implied by the existence of ``coherent'' sequences in $S$. This framework allows us to formalise and prove many results in Ramsey theory, including Gowers' $\FIN_k$ theorem, the Graham--Rothschild theorem, and Hindman's finite sums theorem. Other highlights include: a simple nonstandard proof of the Graham--Rothschild theorem for strong variable words; a nonstandard proof of Bergelson--Blass--Hindman's partition theorem for located variable words, using a result of Carlson, Hindman and Strauss; and a common generalisation of the latter result and Gowers' theorem, which can be proven in our framework.
\end{abstract}
\vspace{7mm}

\section{Introduction}

Ramsey theory mathematically studies to what extent regular configurations appear in disorder. A Ramsey-type result typically has the following form: for any finite colouring of some structure $\M$, we can find a monochromatic substructure $\Nn \subseteq M$ with certain properties. The structure $\M$ and required properties of $\Nn$ are what distinguish the various results. Commonly, the structure in question will be a semigroup. An early example is van der Waerden's theorem on monochromatic arithmetic progressions:

\begin{theorem}[van der Waerden]
	For every $k \in \N$ and finite colouring of $\N$, there is $a, d \in \N$ such that the arithmetic progression $a,\ a+d,\ a+2d,\ \ldots,\ a+(k-1)d$ is monochromatic.
\end{theorem}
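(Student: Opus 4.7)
The plan is to prove the theorem by induction on $k$, after reducing it (by the standard compactness argument on $r^\N$) to the quantitative statement that there exist finite van der Waerden numbers $W(k, r) \in \N$ such that every $r$-colouring of $\{1, \ldots, W(k, r)\}$ contains a monochromatic $k$-term arithmetic progression. The cases $k = 1$ (trivial) and $k = 2$ (pigeonhole) are immediate.

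For the inductive step $k \to k+1$, I would use the classical technique of \emph{colour-focused progressions}. Say that $k$-APs $P_1, \ldots, P_m$ with common differences $d_1, \ldots, d_m$ are colour-focused at $f \in \N$ if each $P_i$ is monochromatic in some colour $c_i$, the $c_i$ are pairwise distinct, and $\max P_i + d_i = f$ for every $i$. The goal is a secondary induction on $m$, establishing that for each $m \le r$ there exists $N(k, r, m) \in \N$ such that every $r$-colouring of $\{1, \ldots, N(k, r, m)\}$ contains either a monochromatic $(k+1)$-AP or $m$ colour-focused $k$-APs. Taking $m = r$ then closes the argument: by pigeonhole the colour of the focus $f$ must coincide with some $c_i$, and then $P_i \cup \{f\}$ is a monochromatic $(k+1)$-AP.

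The step from $m$ to $m+1$ is the engine: one partitions a sufficiently long interval into consecutive blocks of length $N(k, r, m)$, extracts $m$ colour-focused $k$-APs from a single block by the subsidiary inductive hypothesis, and then applies pigeonhole (or another invocation of the outer hypothesis $W(k, R)$ with $R = r^{N(k, r, m)}$ colours on block-patterns) across the blocks to find two blocks whose colourings agree on the relevant positions. The translation between these two blocks then furnishes the $(m+1)$-th colour-focused $k$-AP. The main obstacle is the delicate interleaving of the two inductions and the bookkeeping on common differences and block sizes, in particular ensuring that the eventual focus $f$ still lies within the ambient interval so the $(k+1)$-AP one extracts is valid. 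A conceptually more elegant route, closer in flavour to the techniques developed later in this paper, would be to derive the theorem either from the Hales--Jewett theorem (which specialises to van der Waerden by interpreting variable-word fillings as arithmetic progressions) or from the algebraic Ramsey theory of minimal idempotents in $\beta\N$.
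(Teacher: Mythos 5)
Your route is genuinely different from the paper's. The paper never proves van der Waerden's theorem directly: it states it in the introduction as motivation, and the only derivation it offers comes much later, as the chain Graham--Rothschild $\Rightarrow$ Hales--Jewett $\Rightarrow$ van der Waerden (citing Graham--Rothschild--Spencer for the last implication), where the Graham--Rothschild theorem is itself obtained from the layered-semigroup framework together with the Carlson--Hindman--Strauss construction of an $\F$-coherent sequence. So the ``more elegant route'' you mention in your closing sentence is precisely the paper's route. What you propose instead is the classical elementary double induction via colour-focused progressions; it is self-contained, needs no ultrafilters, nonstandard extensions, or the Hales--Jewett theorem, and yields explicit (if Ackermannian) bounds $W(k,r)$. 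The paper's approach buys uniformity --- van der Waerden drops out of the same machine that yields Gowers', Hindman's and the Graham--Rothschild theorems --- at the cost of resting on the hardest imported ingredient in the paper (Theorem \ref{thm:gr-f-coh}).

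One step of your sketch would fail as literally written. In the passage from $m$ to $m+1$ colour-focused $k$-progressions you say the aim is to ``find two blocks whose colourings agree.'' For $k \geq 3$ two identical blocks are not enough: if $P_i = \{a_i,\ a_i+d_i,\ \ldots,\ a_i+(k-1)d_i\}$ lives in a block $B$ and you pass to the fanned-out progression $Q_i = \{a_i,\ a_i+(d_i+s),\ \ldots,\ a_i+(k-1)(d_i+s)\}$, then its $j$-th term sits at the old relative position $a_i+jd_i$ inside the block $B+js$, so you need all $k$ blocks $B,\ B+s,\ \ldots,\ B+(k-1)s$ to carry the same colour pattern. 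That is a monochromatic $k$-term arithmetic progression in the block-pattern colouring, which is exactly an application of the outer inductive hypothesis $W(k, r^{N(k,r,m)})$ --- the parenthetical alternative you offer --- and cannot be replaced by pigeonhole; this inner invocation with exponentially many colours is the source of the tower-type growth of the bounds. (Pigeonhole on blocks does suffice when $k=2$, which is why the $W(3,r)$ case is often presented that way.) With that correction, and the usual care that the focus $f+ks$ still lies in the ambient interval, the argument is the standard one and goes through.
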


Here, the structure in question is the semigroup $(\N,+)$. A later example, more in the style of the results of this paper, is the Hales--Jewett theorem about the word semigroup $A^{<\omega}$ over a finite alphabet $A$. We let $V = \big( A \cup \{x\} \big)^{<\omega} \setminus A^{<\omega}$ be the set of \textit{variable words}, words over $A$ which include the variable symbol $x$. Given $u \in V$ and $a \in A$, the (nonvariable) word $u[a]$ is formed by replacing each occurrence of $x$ in $u$ with $a$.

\begin{theorem}[Hales--Jewett]
	For every finite colouring of $A^{<\omega}$, there is a variable word $u \in V$ such that $\{ u[a] : a \in A \}$ is monochromatic.
\end{theorem}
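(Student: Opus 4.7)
The plan is to realise $A^{<\omega}$ as a quotient of a larger word semigroup and apply the nonstandard/ultrafilter machinery the paper develops. Let $W = (A \cup \{x\})^{<\omega}$ under concatenation, so that $V = W \setminus A^{<\omega}$ is the set of variable words; for each $a \in A$, the substitution map $\phi_a : W \to A^{<\omega}$, $w \mapsto w[a]$ (with $\phi_a(w) = w$ when $w \in A^{<\omega}$), is a semigroup homomorphism. The theorem reduces to showing that for every finite colouring $c$ of $A^{<\omega}$ there is $u \in V$ with $c(\phi_a(u))$ independent of $a \in A$.

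I would then work in the Stone--\v{C}ech compactification $\beta W$, a compact right topological semigroup. The target is an idempotent $p \in \overline{V} \subseteq \beta W$ whose pushforwards $\phi_a(p)$ in $\beta A^{<\omega}$ all coincide, say to a common ultrafilter $q$. Given such a $p$, for any finite colouring $c$ there is a unique colour class $C$ with $C \in q$; then $\bigcap_{a \in A} \phi_a^{-1}(C) \in p$, so is nonempty, and any element of this intersection is the required variable word. In nonstandard terms, $p$ corresponds to an internal variable word $U$ such that $\phi_a(U)$ realises the same ``type'' for every $a$ --- exactly the flavour of the coherent sequences the paper introduces.

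The main obstacle is the existence of $p$: Ellis's theorem alone produces an idempotent, but not one whose pushforwards coincide. The usual route is induction on $|A|$, building longer and longer variable words whose substitutions agree more and more strongly across letters, and passing to an ultrafilter limit at the end. In the paper's framework this should translate into constructing a coherent sequence in the layered semigroup $A^{<\omega}$, where the layering records the nesting of variable substitutions and the finitely many maps $\phi_a$ act compatibly with the layer structure. Once coherence is established, deducing the monochromatic variable word is a routine extraction argument; the genuine content lies in arranging coherence simultaneously for every $a \in A$.
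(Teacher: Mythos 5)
Your reduction and extraction steps are fine: if $p\in\beta W$ satisfies $V\in p$ and all the pushforwards $\phi_a(p)$ coincide with a single $q\in\beta A^{<\omega}$, then for the colour class $C\in q$ the set $V\cap\bigcap_{a\in A}\phi_a^{-1}(C)$ lies in $p$, is nonempty, and any of its elements is the desired variable word. The problem is that you have correctly isolated the crux --- producing such a $p$ --- and then not proved it. ``Induction on $|A|$, building longer and longer variable words whose substitutions agree more and more strongly, and passing to an ultrafilter limit'' is a description of a hoped-for argument, not an argument; Ellis's theorem gives you an idempotent in the closed ideal $\overline{V}$, but nothing you have written forces its $|A|$ pushforwards to agree, and that is the entire content of the theorem.

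The paper closes exactly this gap with a minimality argument rather than an induction on the alphabet (Lemma \ref{lem:hj-ns}, following \cite[Lemma 7.9]{dinassoNonstandardMethodsRamsey2019}). In the notation there: choose a $\preccurlyeq$-\emph{minimal} $u$-idempotent $\nu$ in $\h W_0$ (Corollary \ref{cor:u-id-min}); since $\h W_1$ is a closed left ideal of $\h(W_0\cup W_1)$, Lemma \ref{lem:id-below} yields a $u$-idempotent $\om\in\h W_1$ with $\om\preccurlyeq\nu$. Each substitution $\phi_a$ is a homomorphism that is the \emph{identity} on $W_0$, so $\phi_a(\nu)=\nu$, and homomorphisms preserve $\preccurlyeq$, so $\om[a]\preccurlyeq\nu$; minimality of $\nu$ then forces $\om[a]\sim\nu$ for every $a\in A$ simultaneously, with no induction on $|A|$. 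The two ingredients your sketch is missing are precisely (a) taking $\nu$ minimal from the outset and (b) exploiting that $\phi_a$ fixes the non-variable layer pointwise. (Officially the paper deduces Hales--Jewett as Corollary \ref{cor:hj} from the Graham--Rothschild theorem via the layered-semigroup framework, whose coherent sequence it imports from Carlson--Hindman--Strauss; but the self-contained route through Lemma \ref{lem:hj-ns} is the one your proposal is shadowing, and it is the minimal-idempotent step you would need to supply.)
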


Infinitary Ramsey theory received a boost in the 1970s with the advent of ultrafilter methods, as pioneered by Glazer in his proof of Hindman's finite sums theorem \cite[Thm 10.3]{comfortUltrafiltersOldNew1977}. Given a semigroup $(S,+)$, we can naturally extend $+$ to an operation $\oplus$ on the set $\beta S$ of ultrafilters on $S$. Furthermore, $\beta S$ admits a natural topology, making it a compact right-topological semigroup. The rich algebraic structure of $\beta S$ has powerful applications and consequences all throughout combinatorics \cite{hindmanAlgebraStoneCechCompactification2012}. More recently, nonstandard methods have also seen success in Ramsey theory \cite{dinassoNonstandardMethodsRamsey2019}, particularly in studying partition regularity of Diophantine equations \cite{dinassoRamseyPropertiesNonlinear2018, dinassoFermatlikeEquationsThat2018, barrettRadoConditionsNonlinear2019}.

In \cite{farahPartitionTheoremsLayered2002}, Farah, Hindman and McLeod introduced \textit{layered semigroups}, as well as shifts and layered actions thereon. The motivation was to generalise partition results about certain spaces of variable words, such as Gowers' $\FIN_k$ theorem, the Hales--Jewett theorem, and Bergelson, Blass and Hindman's theorem on located words. Layered semigroups were further explored by Lupini \cite{lupiniGowersRamseyTheorem2017} and Farmaki--Negrepontis \cite{farmakiRamseyTheoryMixed}.

This paper should be considered a ``spiritual successor'' to \cite{farahPartitionTheoremsLayered2002}. We also work in the setting of layered semigroups, but we consider a different, much broader class of morphisms, called \textit{regressive maps}. Working in this setting, we develop a general framework to prove partition theorems about a layered semigroup $S$, assuming only the existence of certain ``coherent'' sequences in $S$. This framework allows a general way to formulate and prove many fundamental results of Ramsey theory.

While \cite{farahPartitionTheoremsLayered2002} was phrased in the language of ultrafilters, we instead formalise our results using nonstandard analysis. We believe the nonstandard formulation is more intuitive, but our work has an equivalent translation in the setting of ultrafilters. \S\ref{sec:nonst} reviews the necessary concepts of nonstandard analysis, working in an internal superstructure model. Effectively, every object $M$ under consideration is assigned a nonstandard extension $\hM$, such that the \textit{transfer principle} holds---$M$ and $\hM$ satisfy the same ``elementary'' properties.

Ellis' theory of compact semitopological semigroups (CSTSs) is also essential to our nonstandard study of Ramsey theory, and we discuss the topological prerequisites in \S\ref{sec:u-semi}. For a semigroup $S$, we define a topology on $\hS$ such that $\hS$ is ``nearly'' a CSTS. This gives us analogues of results in CSTS theory---particularly the Ellis-Numakura lemma guaranteeing the existence of idempotents, which are essential to our work.

In \S\ref{sec:lay-sg}, we define layered semigroups $S$---those which can be partitioned into countably many layers $S_0, S_1, \ldots$ so that $S_0 \cup \cdots \cup S_n$ forms a semigroup, of which $S_n$ is an ideal. We see some examples which naturally occur in Ramsey theory, some of which are in fact partial, but \textit{adequate} in a specified sense. \S\ref{sec:regr} considers regressive maps on $S$---semigroup homomorphisms $f: S \to S$ which map layers downwards, and don't separate or reorder them. Natural examples of maps on layered semigroups are generally regressive, hence this notion distills the essential Ramsey-theoretic properties of such maps.

In \S\ref{sec:pi} and \S\ref{sec:seq}, nonstandard analysis comes in, as we consider sequences $(\alpha_i)$ of nonstandard elements where $\alpha_n \in \hS_n$. Such a sequence is \textit{coherent} if it is closed under all regressive maps under consideration, and \textit{Ramsey} if $\alpha_n$ absorbs all $\alpha_i$, $i \leq n$ under the semigroup operation. We present the main mechanism for proving Ramsey statements in \S\ref{sec:main}. If $\F$ is a collection of regressive maps on $S$, the framework is summarised diagrammatically below:\vspace{2mm}

\begin{tikzcd}[shorten=1mm]
	(S,\F) \arrow[r,dashed] \arrow[r,dashed,bend left,start anchor={north east},end anchor={north west}] \arrow[r,dashed,bend right,start anchor={south east},end anchor={south west}] & \F\text{-coherent} \arrow[rr, "\text{Thm \ref{thm:coh->ram}}"] & & \F\text{-Ramsey} \arrow[rr,end anchor={[xshift=2mm]},"\text{Thm \ref{thm:ram1}}"] & & \text{\begin{tabular}{c}Ramsey statement\\about $(S,\F)$\end{tabular}} \\[-4mm]
\end{tikzcd}

It is difficult to construct general arguments giving the implication \begin{tikzcd}[cramped,sep=scriptsize]\arrow[r,dashed]&{}\end{tikzcd} above, without having to impose very strong conditions on $S$ and $\F$. Therefore, the construction of an $\F$-coherent will usually depend on the specific semigroup under consideration. The most general construction we give is Lemma \ref{lem:compl}, for ``complete'' subsemigroups of $\FIN^A$---this covers Gowers' theorem and Bergelson--Blass--Hindman's theorem on located variable words.

However, the other two implications work much more generally, only requiring weak, natural conditions on $S$ and $\F$. As a result, Ramsey statements on $(S,\F)$ can be reduced to the existence of coherent sequences in $S$. Towards the end of the paper, we show how our general framework recovers many fundamental results in Ramsey theory, including
\begin{itemize}
	\item Gowers' $\FIN_k$ theorem, and its generalisation due to Lupini (\S\ref{sec:gow});
	\item The Graham--Rothschild parameter sets theorem (\S\ref{sec:gr});
	\item The Galvin--Glazer theorem and Hindman's finite sums theorem (\S\ref{sec:gg});
	\item An infinitary, multivariable generalisation of Bergelson, Blass and Hindman's partition theorem on located variable words (\S\ref{sec:lv-words}).
\end{itemize}

Collectively, these theorems imply a variety of other Ramsey-type results, including Hindman's finite unions theorem, the Hales--Jewett theorem, and van der Waerden's theorem. In each case, we give elementary nonstandard constructions of coherent sequences, which is enough to imply the corresponding result via our framework. We also present a common generalisation of Gowers' theorem and the multivariable Bergelson--Blass--Hindman theorem (and even the Milliken--Taylor theorem) in \S\ref{sec:gowers-bbh}, which is provable using our framework. Again, an elementary nonstandard argument constructs an $\F$-coherent in this case.

Throughout, we let $\N = \{ 0, 1, 2, \ldots \}$ be the set of \textit{nonnegative} integers. We may use interval notation, e.g.\ $[0,3]$, $[1,7)$, and this should be interpreted \textit{in the natural numbers}, i.e.\ $[n,m] = \{ k \in \N: n \leq k \leq m \}$. This notation will later extend to nonstandard integers $\h \N$, i.e.\ $[\xi,\zeta] = \{ \alpha \in \h\N: \xi \leq \alpha \leq \zeta \}$.

In general, we will use uppercase Latin letters $A,B,\ldots,S,T,\ldots$ for sets and semigroups, lowercase Latin letters $s,t,\ldots$ for elements thereof, and lowercase Greek letters $\alpha,\beta,\ldots$ for elements of nonstandard extensions $\hS$ of semigroups.

\section{Prerequisites}

\subsection{Nonstandard analysis}
\label{sec:nonst}

The main results of this paper (in \S \ref{sec:main}) will be proved using the tools of nonstandard analysis. Here, we give a basic overview of the concepts needed---for a more in-depth exposition of nonstandard methods and their applications to Ramsey theory, see \cite{dinassoNonstandardMethodsRamsey2019}. All our work can alternatively be formulated using ultrafilter methods, as per \cite{todorcevicIntroductionRamseySpaces2010, hindmanAlgebraStoneCechCompactification2012}.

Effectively, we work inside a ``universe'' $\V$ which includes the semigroup $(S,+)$ under consideration, as well as subsets thereof and functions $f: S^k \to S^m$. This universe comes equipped with a \textit{star map} $*: \V \to \V$, which assigns every object $M \in \V$ to its \textit{nonstandard extension} $\hM$, in such a way that the following properties hold:

\begin{axioms}[Basic properties of the star map]\label{axs:basic}\
	\begin{enumerate}
		\item For a set $A$, $\hA$ is also a set, and ${}^\sigma\hspace{-0.25em} A \subseteq \hA$, where ${}^\sigma\hspace{-0.25em} A \defeq \{ \h x: x \in A \}$. This containment is strict iff $A$ is infinite;\label{ax:ext-set}
		\item If $A, B$ are sets such that $A \subseteq B$, then $\hA \subseteq \hB$;
		\item For a set $A$ and $k \in \N$, we have $\h (A^k) = (\hA)^k$;
		\item If $f$ is a function $A \to B$, then $\hf$ is a function $\hA \to \hB$;
		\item If $f: A \to B$ and $x \in A$, then $(\hf)(x) = f(x)$;\label{ax:ext-func}
		\item If $f: A \to B$ and $x \in A$, then $(\hf)(\h\hspace{-0.1em} x) = \h (f(x))$;\label{ax:f(*x)}
		\item If $s \in S$, then $\h\hspace{-0.1em} s = s$;\label{ax:*s}
		\item If $n \in \N$, then $\h n = n$.\label{ax:*n}
	\end{enumerate}
\end{axioms}

The star map also satisfies the following key principle:

\begin{axiom}[Transfer principle]\label{ax:transfer}
	For any elementary\footnote{A formula is \textit{elementary} if all the quantifiers are bounded, i.e.\ of the form $Q\, x \in y$ for objects $x,y \in \V$. All logical formulae which we consider will be elementary.} formula $\varphi(x_1,\ldots,x_n)$ and objects $M_1,\ldots,M_n \in \V$, we have
	\begin{equation*}
		\varphi(M_1,\ldots,M_n) \text{ holds} \iff \varphi(\hM_1,\ldots,\hM_n) \text{ holds}
	\end{equation*}
\end{axiom}

For a rigorous construction satisfying Axioms \ref{axs:basic} and \ref{ax:transfer}, see \cite{dinassoNonstandardMethodsRamsey2019}. Here, we will take on faith that such a structure does exist.

The semigroup operation $+$ can be considered as a function $+: S^2 \to S$, thus we get a natural extension $\h\hspace{-0.1em} +$ of this operation to $\hS$. Abusing notation, we will use $+$ to denote both the original operation and its nonstandard extension---this is somewhat justified by Axiom \ref{axs:basic}.\ref{ax:ext-func}. We will generally do the same for functions $f: S^k \to S^m$.

A peculiarity of our approach will be that we may \textit{iterate} the star map, to obtain nonstandard extensions of nonstandard extensions, and so on. In this way, we get objects $M, \hM, \h\hM, \ldots$. We will use $\kh{n}\hspace{-0.15em} M$ to denote the $n$-fold nonstandard extension of an object $M \in \V$. Axioms \ref{axs:basic} and \ref{ax:transfer} also hold when the objects under consideration are themselves nonstandard.

\begin{remark}
	In general, the simplifying assumptions made in Axioms \ref{axs:basic}.\ref{ax:*s} and \ref{axs:basic}.\ref{ax:*n} cannot be extended to elements of $\hS$, $\h \N$ or higher in the nonstandard hierarchy. As an example, $\N$ is an initial segment of $\h\N$ \cite[Prop 2.27]{dinassoNonstandardMethodsRamsey2019}, so by transfer, $\h\N$ is an initial segment of $\h\h\N$. Now, if we take $\xi \in \h\N \setminus \N$, we have  $\h\xi \in \h\h\N \setminus \h\N$ by transfer. It follows that $\xi < \h\xi \implies \xi \neq \h\xi$.
\end{remark}

\subsection{\texorpdfstring{$u$}{u}-semigroups}
\label{sec:u-semi}

Here, we develop some further notions that prove essential in the study of Ramsey semigroups. These are mostly based on the theory of compact semitopological semigroups (see \cite[\S2]{todorcevicIntroductionRamseySpaces2010}), as developed by Ellis and others. An example is given by $\beta S$, the set of ultrafilters on $S$, whose topological and algebraic structure is well-studied \cite{hindmanAlgebraStoneCechCompactification2012, todorcevicIntroductionRamseySpaces2010}. $\beta S$ is also homeomorphic to the \textit{Stone--\v Cech compactification} of $S$ with the discrete topology. The following map allows us to transport this structure to $\hS$.

\begin{definition}\label{defn:u-eq}
	Elements $\alpha \in \hS$ generate ultrafilters on $S$ via the \textit{ultrafilter map}:
	\begin{equation*}
		\alpha \mapsto \U_\alpha = \{ A \subseteq S : \alpha \in \hA \}
	\end{equation*}
	Two elements $\alpha, \beta$ are \textit{$u$-equivalent} (denoted $\alpha \sim \beta$) if $\U_\alpha = \U_\beta$.
\end{definition}

\begin{proposition}
	$\sim$ is an equivalence relation on $\hS$.
\end{proposition}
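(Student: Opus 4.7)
The proposition is essentially a tautology: the relation $\sim$ is defined by declaring $\alpha \sim \beta$ iff $\U_\alpha = \U_\beta$, so $\sim$ is the pullback of the equality relation on the power set $\Pow(\Pow(S))$ along the ultrafilter map $\alpha \mapsto \U_\alpha$. Since equality is an equivalence relation on any set, and since the pullback of an equivalence relation along any function is again an equivalence relation, the result is immediate. My plan is simply to verify the three axioms explicitly, for the reader's benefit.

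For \emph{reflexivity}, given $\alpha \in \hS$, the set $\U_\alpha = \{A \subseteq S : \alpha \in \hA\}$ is well-defined and equal to itself, hence $\alpha \sim \alpha$. For \emph{symmetry}, if $\alpha \sim \beta$ then $\U_\alpha = \U_\beta$, and equality of sets is symmetric, so $\U_\beta = \U_\alpha$ and $\beta \sim \alpha$. For \emph{transitivity}, if $\alpha \sim \beta$ and $\beta \sim \gamma$ then $\U_\alpha = \U_\beta = \U_\gamma$, so $\alpha \sim \gamma$.

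There is no genuine obstacle here; the only thing one might worry about is whether $\U_\alpha$ is a well-defined object, but this follows directly from Axioms~\ref{axs:basic}, which ensure that for each $A \subseteq S$ the extension $\hA$ is a determinate set and hence the condition ``$\alpha \in \hA$'' has an unambiguous truth value. The proof will therefore be a short three-line verification with no further dependencies.
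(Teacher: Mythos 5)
Your proof is correct and is exactly the (implicit) argument the paper relies on: since $\sim$ is defined as the pullback of equality along the ultrafilter map $\alpha \mapsto \U_\alpha$, the three equivalence-relation axioms are inherited immediately from equality of sets, which is why the paper states the proposition without proof. No gaps.
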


The relation $\sim$ on $\hS$ was first considered by Di Nasso in \cite{dinassoHypernaturalNumbersUltrafilters2015}, and has seen extensive combinatorial and Ramsey-theoretic applications in \cite{dinassoNonstandardMethodsRamsey2019, dinassoRamseyPropertiesNonlinear2018, dinassoFermatlikeEquationsThat2018, barrettRadoConditionsNonlinear2019}. Some of the key properties are summarised below.

\begin{proposition}[\cite{dinassoHypernaturalNumbersUltrafilters2015,dinassoNonstandardMethodsRamsey2019}]\label{prop:dinasso}\
	\begin{enumerate}[label=(\roman*)]
		\item If $\alpha \in \hS$, $s \in S$, then $\alpha \sim s$ if and only if $\alpha = s$.\label{prop:u-eq-injec}
		\item For any function $f: S \to S$, if $\alpha \sim \beta \in \hS$ then $f(\alpha) \sim f(\beta)$.\label{prop:f-alpha}
		\item For any function $f: S \to S$, if $\alpha \in \hS$ is such that $f(\alpha) \sim \alpha$, then $f(\alpha) = \alpha$.
		\item For any $\alpha, \alpha', \beta, \beta' \in \hS$, if $\alpha \sim \alpha'$ and $\beta \sim \beta'$, then $\alpha + \hbeta \sim \alpha' + \hbeta'$.
		\item For any $\alpha \in \hS$, $\alpha \sim \halpha$.
	\end{enumerate}
\end{proposition}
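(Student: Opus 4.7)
The plan is to establish each clause by direct appeal to the transfer principle, with the sole exception of (iii), which requires an additional combinatorial coloring argument. Throughout I use that $\alpha \sim \beta$ means precisely $\alpha \in \hA \iff \beta \in \hA$ for every $A \subseteq S$. For (i), if $\alpha \sim s$ then taking $A = \{s\}$ and transferring gives $\alpha \in \h\{s\} = \{s\}$, so $\alpha = s$; the converse is immediate. For (ii), the elementary equivalence $f(x) \in A \iff x \in f^{-1}(A)$ transfers to $\hf(\xi) \in \hA \iff \xi \in \h(f^{-1}(A))$ on $\hS$; chasing this through $\alpha \sim \beta$ yields $f(\alpha) \sim f(\beta)$. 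For (v), transfer of the elementary predicate ``$\alpha \in \hA$'' (a statement about the objects $\alpha, \hA \in \V$) yields $\halpha \in \h\hA$ for every $A \subseteq S$, which is precisely $\U_\alpha = \U_{\halpha}$.

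The heart of the proposition is (iii). I would argue the contrapositive: assuming $f(\alpha) \neq \alpha$, I will exhibit $A \subseteq S$ with $\alpha \in \hA$ but $f(\alpha) \notin \hA$. Let $D = \{s \in S : f(s) \neq s\}$; since $f(\alpha) \neq \alpha$, transfer places $\alpha \in \h D$. The key combinatorial input is that the undirected graph on $S$ with edges $\{s, f(s)\}$ for $s \in D$ is always $3$-colorable: each connected component of the functional graph of $f$ consists of trees hanging off at most one cycle (a ``rho'' shape), and while odd cycles force a third color, three colors always suffice, with Zorn's lemma assembling such a coloring component-by-component for uncountable $S$. Fix $c: S \to \{0,1,2\}$ with $c(s) \neq c(f(s))$ for all $s \in D$, and set $A_i = c^{-1}(i)$. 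Transferring this property to $\hS$ gives $\h c(\alpha) \neq \h c(f(\alpha))$, which separates $\alpha \in \hA_{\h c(\alpha)}$ from $f(\alpha) \notin \hA_{\h c(\alpha)}$, so $\alpha \not\sim f(\alpha)$.

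For (iv), I would chain $\alpha + \hbeta \sim \alpha' + \hbeta \sim \alpha' + \hbeta'$. The first $\sim$ follows from (ii) applied to the family of right-translations $g_t(s) = s + t$ parametrized by $t \in S$: transfer lifts this $\sim$-preservation property so that internal right-translation by $\beta \in \hS$ carries $\sim$-equivalent inputs to $\sim$-equivalent outputs, with (v) interpolating the extra $\h$. The second $\sim$ is symmetric, working one level up in $\h\hS$ via left-translations; transitivity of $\sim$ combines the two links. The main obstacle is the combinatorial step in (iii): one must handle odd cycles---hence three colors rather than two---and, for uncountable $S$, appeal to the Axiom of Choice to aggregate the component-wise colorings into a single map $c: S \to \{0,1,2\}$.
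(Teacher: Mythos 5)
The paper states this proposition without proof, citing Di Nasso's work, so the comparison here is against the standard arguments in those references. Your clauses (i), (ii) and (v) are correct and are exactly the standard transfer arguments, and your (iii) is the classical route: the three-colouring lemma for the functional graph of $f$ (with the odd-cycle obstruction forcing three colours and choice assembling the components) followed by transfer of the sentence $\forall s\, (f(s)\neq s \rightarrow c(s)\neq c(f(s)))$. These parts are fine.

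The genuine gap is in (iv). The two-link chaining is a reasonable plan, but the justification of the links is not valid as stated. The relation $\sim$ is defined by quantifying over all $A \subseteq S$ and applying the \emph{external} star map to each, so ``$g$ preserves $\sim$'' is not an elementary property and cannot be transferred; the phrase ``transfer lifts this $\sim$-preservation property'' therefore does not produce an inference. The first link can be repaired: for fixed $A \subseteq S$ and $\beta \in \hS$, the set $C = \{ s \in S : s + \beta \in \hA \}$ is a genuine subset of $S$, the elementary sentence $\forall s \in S\ (s \in C \leftrightarrow s + \beta \in \hA)$ has all parameters in $\V$, and its transfer gives $\sigma + \hbeta \in \h\hA \iff \sigma \in \h C$ for $\sigma \in \hS$; since $C \subseteq S$, the hypothesis $\alpha \sim \alpha'$ finishes that link. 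The second link as you describe it (``one level up via left-translations'') breaks down: transferring $\forall t \in S\ (t \in D \leftrightarrow \alpha' + t \in \hA)$ yields a statement about $\halpha' + \tau$, not about $\alpha' + \h\tau$, and these are not $u$-equivalent in general (this asymmetry is precisely why the ultrafilter sum is non-commutative); meanwhile the preimage $\{ \tau \in \h\hS : \alpha' + \tau \in \h\hA \}$ is not of the form $\h\h B$ for any $B \subseteq S$, so the equivalence $\hbeta \sim \hbeta'$ cannot be applied to it. The correct fix is to note that the set $C$ above equals $\{ s \in S : A_s \in \U_\beta \}$ where $A_s = \{ t \in S : s+t \in A \}$, hence depends only on $A$ and $\U_\beta$; combined with $\alpha + \hbeta \in \h\hA \iff \alpha \in \h C$, this shows $\U_{\alpha + \hbeta}$ is determined by the pair $(\U_\alpha, \U_\beta)$, which yields both links of (iv) in one stroke.
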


There is a natural way to define a topology on $\hS$ as follows:

\begin{definition}
	For any semigroup $S$, equip $\hS$ with the \textit{$u$-topology}---that generated by the basic open sets $\hA$ for $A \subseteq S$. We say $\hS$ is a \textit{compact $u$-semigroup}, i.e.
	\begin{enumerate}
		\item $\hS$ is compact;
		\item For any $\alpha, \beta \in S$, there exists $\gamma \in S$ such that $\gamma\, \sim\, \alpha + \hbeta$;
		\item The map $\alpha\, \mapsto\, \alpha + \hbeta$ is continuous.
	\end{enumerate}
\end{definition}

\begin{proposition}
	$\hS / {\sim}$ is Hausdorff, i.e. two elements $\alpha, \beta \in \hS$ are inseparable by disjoint open sets exactly when $\alpha \sim \beta$.
\end{proposition}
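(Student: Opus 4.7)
The plan is to prove the biconditional, and then observe that the separating open sets can be chosen saturated under $\sim$, so that separation in $\hS$ passes to separation in the quotient $\hS/{\sim}$.

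First I would record the key property of the basis $\{\hA : A \subseteq S\}$: by transfer, $\hA \cap \hB = \h(A\cap B)$ and $\hS \setminus \hA = \h(S \setminus A)$. Thus the basis is closed under finite intersections, and every basic open set has a basic open complement.

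For the easy direction, assume $\alpha \sim \beta$. I would show any open neighbourhood of $\alpha$ contains $\beta$: given a basic $\hA \ni \alpha$, we have $A \in \U_\alpha = \U_\beta$, so $\beta \in \hA$. If $U, V$ were disjoint opens with $\alpha \in U$, $\beta \in V$, then picking a basic $\hA$ with $\alpha \in \hA \subseteq U$ forces $\beta \in \hA \subseteq U$, contradicting $U \cap V = \emptyset$.

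For the nontrivial direction, assume $\alpha \not\sim \beta$. Then $\U_\alpha \neq \U_\beta$, so without loss of generality there exists $A \subseteq S$ with $\alpha \in \hA$ and $\beta \notin \hA$. Set $B \defeq S \setminus A$; by transfer $\hB = \hS \setminus \hA$, so $\beta \in \hB$ and $\hA \cap \hB = \emptyset$. This gives disjoint basic open sets separating $\alpha$ and $\beta$ in $\hS$.

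Finally, to conclude that the \emph{quotient} $\hS/{\sim}$ is Hausdorff, I would check that the separating sets just constructed are $\sim$-saturated: if $\gamma \in \hA$ and $\gamma \sim \delta$, then $A \in \U_\gamma = \U_\delta$ gives $\delta \in \hA$, and similarly for $\hB$. Hence their images under the quotient map are disjoint open sets in $\hS/{\sim}$ separating $[\alpha]$ and $[\beta]$. The main (mild) subtlety is exactly this last step --- making sure that the topological separation in $\hS$ respects $\sim$ --- but this is immediate from the fact that every basic open set $\hA$ is a union of $\sim$-classes.
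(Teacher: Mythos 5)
Your proof is correct and complete: the observation that the basic open sets $\hA$ are closed under complementation (via transfer, $\hS \setminus \hA = \h(S\setminus A)$) and are $\sim$-saturated is exactly what makes both the separation and the passage to the quotient work. The paper states this proposition without proof, so there is no argument to compare against, but yours is evidently the intended routine verification.
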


\begin{corollary}\label{zero}
	For continuous functions $f,g: \hS \to \hS$, the set $\{ \alpha \in \hS : f(\alpha) \sim g(\alpha) \}$ is closed.
\end{corollary}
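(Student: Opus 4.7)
The plan is to use the standard trick that a map into a Hausdorff space agreeing with another continuous map is determined by a closed condition, applied to the quotient $\hS/{\sim}$. Concretely, I would let $\pi : \hS \to \hS/{\sim}$ denote the quotient map, and note first that by the very definition of the quotient topology, $\pi$ is continuous. By the immediately preceding proposition, $\hS/{\sim}$ is Hausdorff, so its diagonal
\begin{equation*}
	\Delta \defeq \{ (x,x) : x \in \hS/{\sim} \}
\end{equation*}
is closed in the product topology on $(\hS/{\sim})^2$.

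Next I would form the composite map $h : \hS \to (\hS/{\sim})^2$ given by
\begin{equation*}
	h(\alpha) \defeq \bigl( \pi(f(\alpha)),\ \pi(g(\alpha)) \bigr).
\end{equation*}
Since $f$ and $g$ are continuous by assumption, and $\pi$ is continuous, each coordinate of $h$ is continuous, so $h$ itself is continuous into the product.

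Finally, unfolding the definition of $\sim$ via $\pi$: for $\alpha \in \hS$, we have $f(\alpha) \sim g(\alpha)$ if and only if $\pi(f(\alpha)) = \pi(g(\alpha))$, i.e.\ if and only if $h(\alpha) \in \Delta$. Therefore
\begin{equation*}
	\{ \alpha \in \hS : f(\alpha) \sim g(\alpha) \} = h^{-1}(\Delta),
\end{equation*}
which is closed as the preimage of a closed set under a continuous map. The only real content here is the Hausdorffness of the quotient, which has already been established; the rest is a routine diagonal/preimage argument, so I do not expect any serious obstacle. If one wished to avoid forming the quotient explicitly, one could instead argue directly: given $\alpha_0$ with $f(\alpha_0) \not\sim g(\alpha_0)$, pick disjoint open $u$-neighbourhoods $U, V$ in $\hS$ separating $f(\alpha_0)$ and $g(\alpha_0)$ in the Hausdorff sense (which exist precisely because $\hS/{\sim}$ is Hausdorff), and take $f^{-1}(U) \cap g^{-1}(V)$ as an open neighbourhood of $\alpha_0$ disjoint from the set in question.
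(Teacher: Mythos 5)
Your proof is correct and is precisely the argument the paper intends: the corollary is stated as an immediate consequence of the preceding Hausdorffness proposition, via the standard closed-diagonal (equivalently, direct separation-by-disjoint-open-sets) argument. Your closing remark giving the direct version without forming the quotient is, if anything, the cleanest route here, since the basic open sets $\hA$ are saturated under $\sim$ and one can take $U = \hA$, $V = \h(S\setminus A)$ explicitly.
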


\begin{proposition}
	For any function $f: S \to S$, its nonstandard extension $\hf: \hS \to \hS$ is continuous with respect to the $u$-topology on $\hS$.
\end{proposition}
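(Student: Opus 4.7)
The plan is to prove continuity by checking that the preimage under $\hf$ of every subbasic open set is open. Since the $u$-topology on $\hS$ is generated by the sets $\hA$ for $A \subseteq S$, it suffices to show that $\hf^{-1}(\hA)$ is $u$-open for each $A \subseteq S$.

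The natural candidate is the following: set $B \defeq f^{-1}(A) = \{\, s \in S : f(s) \in A \,\}$, and argue that $\hf^{-1}(\hA) = \hB$, which is a basic $u$-open set by definition. To verify this equality, I would apply the transfer principle (Axiom \ref{ax:transfer}) to the elementary formula
\begin{equation*}
    \forall s \in S,\ \bigl( s \in B \iff f(s) \in A \bigr),
\end{equation*}
which holds by the definition of $B$. Transfer yields
\begin{equation*}
    \forall \alpha \in \hS,\ \bigl( \alpha \in \hB \iff \hf(\alpha) \in \hA \bigr),
\end{equation*}
i.e., $\hB = \hf^{-1}(\hA)$, as desired.

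There is essentially no obstacle here beyond correctly invoking transfer; the statement is immediate once one has set up the right elementary formula. The only mild care required is to note that $\hf^{-1}(\hA)$ genuinely denotes the preimage under the nonstandard extension $\hf$ (a function $\hS \to \hS$ by Axiom \ref{axs:basic}.\ref{ax:ext-func}), and that the transferred formula is applied with $B, f, A$ as parameters in $\V$—all of which is routine.
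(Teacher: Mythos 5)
Your proof is correct: the identity $\hf^{-1}(\hA) = \h\bigl(f^{-1}(A)\bigr)$ via transfer is exactly the standard argument, and since the family $\{\hA : A \subseteq S\}$ is closed under finite intersections (as $\hA \cap \hB = \h(A\cap B)$), checking these sets suffices. The paper states this proposition without proof, and your argument is precisely the routine one it leaves implicit.
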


Idempotent ultrafilters are key to most applications of infinitary methods in Ramsey theory---their existence follows from the Ellis--Numakura lemma. Throughout this paper, we will use a similar notion of idempotence for elements of $\hS$.

\begin{definition}
	Suppose $(S,+)$ is a semigroup. We say $\alpha \in \hS$ is \textit{$u$-idempotent} if $\alpha + \halpha\ \sim\ \alpha$.
\end{definition}

\begin{lemma}[Ellis--Numakura]\label{lem:u-id}
	If $T \subseteq \hS$ is a closed $u$-subsemigroup, then $T$ contains a $u$-idempotent element.
\end{lemma}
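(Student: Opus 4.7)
The proof will adapt the classical Ellis--Numakura argument for compact right-topological semigroups to the setting of closed $u$-subsemigroups. First, I apply Zorn's lemma to the family of closed $u$-subsemigroups of $T$, ordered by reverse inclusion, to obtain a minimal such subsemigroup $M$. To verify the Zorn hypothesis, note that if $(M_i)$ is a descending chain of closed $u$-subsemigroups, its intersection $M_\infty = \bigcap_i M_i$ is closed and nonempty (by compactness of $\hS$ and the finite intersection property). For the $u$-subsemigroup property of $M_\infty$: given $\alpha, \beta \in M_\infty$, the set $C = \{\gamma \in \hS : \gamma \sim \alpha + \hbeta\}$ is closed by Corollary \ref{zero}, each $C \cap M_i$ is nonempty since each $M_i$ is a $u$-subsemigroup, so $C \cap M_\infty \neq \emptyset$ by another finite-intersection argument.

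Next, fix any $\alpha \in M$ and set
$$L = \{\beta \in M : \beta + \halpha \sim \alpha\}.$$
The aim is to show $L$ is a nonempty closed $u$-subsemigroup of $M$, whence minimality forces $L = M$, and in particular $\alpha \in L$, i.e.\ $\alpha + \halpha \sim \alpha$. Closedness follows from Corollary \ref{zero} applied to the continuous map $\beta \mapsto \beta + \halpha$ and the constant $\alpha$. For the $u$-subsemigroup property of $L$: given $\beta_1, \beta_2 \in L$, the $u$-subsemigroup property of $M$ produces $\gamma \in M$ with $\gamma \sim \beta_1 + \h\beta_2$, and then associativity (transferred from $S$) together with Proposition \ref{prop:dinasso}(iv) gives $\gamma + \halpha \sim (\beta_1 + \h\beta_2) + \halpha \sim \beta_1 + \halpha \sim \alpha$, so $\gamma \in L$. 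For nonemptiness, consider the ``right translate'' $M \ast \halpha := \{\delta \in M : \delta \sim \beta + \halpha \text{ for some } \beta \in M\}$. A parallel argument shows this is itself a closed $u$-subsemigroup of $M$, so by minimality $M \ast \halpha = M$; in particular $\alpha \in M \ast \halpha$, yielding some $\beta \in M$ with $\beta + \halpha \sim \alpha$.

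The main obstacle I expect is delicate bookkeeping about iterated stars: the $u$-idempotence condition $\alpha + \halpha \sim \alpha$ already mixes $\hS$ and $\h\hS$, and the associativity manipulations appearing in the $u$-subsemigroup verifications rely implicitly on the compatibility of $\sim$ with the semigroup operation across nonstandard levels (Proposition \ref{prop:dinasso}(iv)) and on $\alpha \sim \halpha$ (Proposition \ref{prop:dinasso}(v)). Verifying that $M \ast \halpha$ really is a closed $u$-subsemigroup---rather than some object living one level higher in the nonstandard hierarchy---also requires care to descend from the image of right translation (a priori in $\h\hS$) back to $\hS$ via $\sim$. Once this machinery is applied cleanly, the proof structure itself mirrors the classical Ellis--Numakura argument step for step.
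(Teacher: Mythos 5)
Your argument is correct: it is the classical Ellis--Numakura proof (Zorn's lemma for a minimal nonempty closed $u$-subsemigroup $M$, then the right-translate $M\ast\halpha$ and the stabiliser $L$), correctly transported to the $u$-equivalence setting, and the subtleties you flag---descending from $\h\hS$ back to $\hS$ via $\sim$, and the associativity manipulations through Proposition \ref{prop:dinasso}---are exactly the points that need care and are handled adequately. The paper states Lemma \ref{lem:u-id} without proof, as a known result, so your write-up supplies precisely the standard argument the paper implicitly relies on.
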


Often, we will need a strengthening of Lemma \ref{lem:u-id}, as follows.

\begin{definition}
	Define a relation $\preccurlyeq$ on $\hS$ by
	\begin{equation*}
		\alpha \preccurlyeq \beta\ \iff\ \alpha + \hbeta\ \sim\ \beta + \halpha\ \sim\ \alpha
	\end{equation*}
	$\preccurlyeq$ is a partial order (up to $u$-equivalence) on the $u$-idempotents of $\hS$.
\end{definition}

\begin{corollary}[{\cite[Lemma 2.3]{todorcevicIntroductionRamseySpaces2010}}]\label{cor:u-id-min}
	Any closed $u$-subsemigroup of $\hS$ contains a $\preccurlyeq$-minimal $u$-idempotent element.
\end{corollary}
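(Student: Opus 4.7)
The plan is a direct adaptation of the standard Ellis-type argument: apply Zorn's Lemma to find a $\subseteq$-minimal closed $u$-subsemigroup $T_0 \subseteq T$, use the Ellis--Numakura lemma to produce a $u$-idempotent $\alpha \in T_0$, and then leverage the minimality of $T_0$ to show $\alpha$ is $\preccurlyeq$-minimal.

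First I would check that Zorn's Lemma applies to the collection $\mathcal{C}$ of closed $u$-subsemigroups of $T$, ordered by reverse inclusion. It is nonempty (it contains $T$), so I only need to show every descending chain $(T_i)_{i \in I}$ has an upper bound, i.e.\ that $T_\infty \defeq \bigcap_i T_i$ is again in $\mathcal{C}$. It is closed as an intersection of closed sets, and nonempty by compactness of $\hS$ together with the finite intersection property. To see it is a $u$-subsemigroup, take $\alpha, \beta \in T_\infty$; for each $i$ the set $T_i \cap \{\gamma : \gamma \sim \alpha + \hbeta\}$ is closed (by Corollary \ref{zero} applied to $\gamma \mapsto \gamma$ and the constant map $\alpha + \hbeta$) and nonempty (since $T_i \in \mathcal{C}$), so by compactness these sets intersect, giving a witness in $T_\infty$. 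Hence by Zorn we obtain a minimal $T_0 \in \mathcal{C}$, and by Lemma \ref{lem:u-id} it contains a $u$-idempotent $\alpha$.

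Next I would establish a ``left-absorption'' property inside $T_0$. Consider
\[
	L \defeq \{\, \gamma \in T_0 : \gamma + \halpha\ \sim\ \alpha \,\}.
\]
It is nonempty ($\alpha \in L$ since $\alpha$ is $u$-idempotent), closed by Corollary \ref{zero}, and a $u$-subsemigroup: for $\gamma, \delta \in L$ one gets a representative $\epsilon \in T_0$ with $\epsilon \sim \gamma + \hdelta$, and applying Proposition \ref{prop:dinasso}(iv) together with associativity in $\h\hS$ yields $\epsilon + \halpha \sim \gamma + (\hdelta + \halpha) \sim \gamma + \halpha \sim \alpha$. Minimality of $T_0$ then forces $L = T_0$; in particular, $\gamma + \halpha \sim \alpha$ for every $\gamma \in T_0$.

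Finally, to show $\alpha$ is $\preccurlyeq$-minimal, suppose $\beta$ is a $u$-idempotent with $\beta \preccurlyeq \alpha$; I want $\beta \sim \alpha$. I would produce such a $\beta$ inside $T_0$ by applying the whole construction to the closed $u$-subsemigroup $T' \defeq \{\gamma \in T_0 : \gamma + \hbeta \sim \beta,\ \beta + \hat\gamma \sim \beta\}$ (which contains $\beta$ after the obvious adjustment, using that $\beta \preccurlyeq \alpha$ and $\alpha \in T_0$); alternatively, one applies the dual right-absorption argument. Either way, once $\beta \in T_0$, the equality $L = T_0$ yields $\beta + \halpha \sim \alpha$, while $\beta \preccurlyeq \alpha$ gives $\beta + \halpha \sim \beta$, whence $\alpha \sim \beta$.

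The main obstacle I anticipate is bookkeeping with the iterated star map: expressions like $\alpha + \hbeta$ live in $\h\hS$ rather than $\hS$, and $\preccurlyeq$ and $\sim$ have to be applied in a way consistent with Proposition \ref{prop:dinasso}. The subsemigroup and closure checks for $L$ are mechanical once the right invocations of associativity (via transfer) and compatibility of $\sim$ with $+$ are in place, but writing them so that every term lies in an object where the relevant axiom has been stated is where the argument needs care.
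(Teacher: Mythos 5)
Your overall architecture (Zorn's lemma, then Ellis--Numakura, then an absorption argument) is the right family of ideas, but running Zorn over closed $u$-\emph{subsemigroups} rather than closed \emph{left ideals} is a fatal choice, and the final step cannot be repaired inside that framework. The problem is that a minimal closed $u$-subsemigroup carries no minimality information about its idempotent. Indeed, pushing your own Step 3 one notch further: besides $L = T_0$ (so $\gamma + \halpha \sim \alpha$ for all $\gamma \in T_0$), the set of $\delta \in T_0$ that are $u$-equivalent to some $\gamma + \halpha$ with $\gamma \in T_0$ is also a nonempty closed $u$-subsemigroup of $T_0$, hence equals $T_0$ by minimality; combining the two facts gives $T_0 = [\alpha]_\sim$. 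So the minimal closed $u$-subsemigroups of $T$ are precisely the $u$-equivalence classes of the $u$-idempotents of $T$, and \emph{every} idempotent of $T$ --- minimal or not --- arises from your construction. In particular the absorption identity $L = T_0$ is vacuous, and an idempotent $\beta \preccurlyeq \alpha$ will generically lie outside $T_0$, so your concluding computation never gets off the ground.

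Your attempted patch does not close this gap. The set $T' = \{\gamma \in T_0 : \gamma + \hbeta \sim \beta,\ \beta + \h\gamma \sim \beta\}$ is a subset of $T_0$, so it cannot ``contain $\beta$'' unless $\beta \in T_0$, which is exactly what you need to prove; there is no reason for $T'$ to be nonempty; and even granting $T' = T_0$ you would only recover $\alpha + \hbeta \sim \beta$, which is already part of the hypothesis $\beta \preccurlyeq \alpha$. Worse, the condition $\beta + \h\gamma \sim \beta$ constrains $\gamma$ through \emph{left} multiplication $\gamma \mapsto \beta + \h\gamma$, which is not continuous in this right-topological setting, so $T'$ need not be closed and Corollary~\ref{zero} does not apply; the same objection kills the ``dual right-absorption argument.'' The paper itself gives no proof, deferring to Todor\v cevi\'c's Lemma~2.3, and that proof runs Zorn over the closed \emph{left ideals} of $T$: take a minimal one $I_0$, obtain a $u$-idempotent $\alpha \in I_0$ by Lemma~\ref{lem:u-id}, and then for a $u$-idempotent $\beta \preccurlyeq \alpha$ in $T$ note first that $\beta \sim \beta + \halpha$ lies in $I_0$ (left-ideal property plus closedness of $I_0$), and second that $I_0 + \hbeta$ is a closed left ideal contained in $I_0$, so by minimality $\alpha \sim \gamma + \hbeta$ for some $\gamma$, whence $\alpha \sim \alpha + \hbeta \sim \beta$. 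The left-ideal structure is exactly the ingredient your subsemigroup version lacks.
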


\section{Layered semigroups}
\label{sec:lay-sg}

Our results concern the framework of \textit{layered semigroups}, as introduced by Farah, Hindman and McLeod in \cite{farahPartitionTheoremsLayered2002}. We will work with the following adaptation of their definition:

\begin{definition}\label{defn:lay-sg}
	A \textit{layered semigroup} is a (total) semigroup $S$, with a \textit{layering map} $\ell: S \to \N$ such that for all $s,t \in S$, $\ell(s+t) = \max \{ \ell(s), \ell(t) \}$.
\end{definition}

The map $\ell$ splits $S$ into layers $S_n = \ell^{-1}(n)$ for each $n \in \range(\ell)$. Without loss of generality, we will suppose\footnote{By shifting down values of $\ell$ as required.} that $\range(\ell)$ is an initial segment of $\N$. We will encounter situations where $\range(\ell)$ is finite (i.e.\ our semigroup has finitely many layers), but also cases when $\range(\ell) = \N$ (i.e.\ our semigroup has infinitely many layers. In theory, we could allow $\range(\ell) = \delta$ for ordinals $\delta > \omega$, but we will not pursue such generalisations here.

Definition \ref{defn:lay-sg} is equivalent to the following, which is more in the style of Farah, Hindman and McLeod's original definition:

\begin{proposition}
	A pair $\big( S,\ \ell: S \to \N \big)$ form a layered semigroup if and only if:
	\begin{enumerate}
		\item $S_{\leq n} \defeq \ell^{-1} \big( \{ 0, \ldots, n \} \big)$ is a subsemigroup of $S$;
		\item $S_n = \ell^{-1}(n)$ is a two-sided ideal of $S_{\leq n}$.
	\end{enumerate}
\end{proposition}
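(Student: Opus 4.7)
The plan is to prove both directions directly from the defining equation $\ell(s+t) = \max\{\ell(s), \ell(t)\}$, with essentially no machinery beyond elementary set-theoretic manipulation.

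For the forward direction, I would assume the layering condition and verify (i) and (ii) in turn. If $s, t \in S_{\leq n}$ then $\ell(s), \ell(t) \leq n$, so $\ell(s+t) = \max\{\ell(s), \ell(t)\} \leq n$, giving $s+t \in S_{\leq n}$; this yields (i). For (ii), take $s \in S_n$ and $t \in S_{\leq n}$: then $\ell(s+t) = \max\{n, \ell(t)\} = n$ since $\ell(t) \leq n$, so $s+t \in S_n$, and symmetrically $t+s \in S_n$. Hence $S_n$ absorbs $S_{\leq n}$ on both sides.

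For the converse, I would assume (i) and (ii), pick arbitrary $s, t \in S$, and set $m = \ell(s)$, $n = \ell(t)$. Without loss of generality assume $m \leq n$, so $s \in S_{\leq n}$ and $t \in S_n \subseteq S_{\leq n}$. Applying (i) gives $s+t \in S_{\leq n}$, so $\ell(s+t) \leq n$. Applying (ii) (with $t \in S_n$ playing the role of the ideal element and $s \in S_{\leq n}$) gives $s+t \in S_n$, so $\ell(s+t) = n = \max\{m, n\}$, as required.

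There is no real obstacle here: both directions are one-line verifications from the definitions, and the argument needs nothing more than the fact that $\max\{a,b\} \leq n$ iff $a, b \leq n$, together with the observation that $\max\{a,b\}$ equals the larger of the two values. The only mild subtlety is noticing, in the backward direction, that after reducing to $m \leq n$ one should invoke (ii) with the higher-layer element $t$ rather than $s$; this is what forces $\ell(s+t)$ to hit $n$ exactly rather than merely be bounded by it.
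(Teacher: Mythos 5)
Your proof is correct and is exactly the routine verification the paper has in mind (the paper states this proposition without proof). Both directions follow as you describe, and you correctly handle the one small point of care — using the two-sidedness of the ideal to justify the WLOG reduction to $\ell(s) \leq \ell(t)$ in the converse.
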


%

\begin{remark}
	Farah, Hindman and McLeod's original definition in \cite{farahPartitionTheoremsLayered2002} is given by only allowing finitely many layers, and further positing that $S_0 = \{ e \}$, where $e$ is a two-sided identity for any element of $S$. We have relaxed both conditions, since we will encounter layered semigroups for which neither holds.
\end{remark}

To illustrate Definition \ref{defn:lay-sg}, we present some examples of layered semigroups which naturally arise in Ramsey theory, and which will be relevant later in this paper.

\begin{example}\label{inf1}
	For each $i$, let $M_i$ be a monoid (semigroup with identity $e_i$), such that only the identity has an inverse. Let $S$ be the set of tuples $(m_0,m_1,m_2,\ldots) \in \prod_{i=0}^\infty M_i$ with finite support (i.e. $m_n \neq e_n$ for finitely many $n$). Then, $S$ is a layered semigroup under pointwise operations, and the layering map $\ell \big[ (m_i) \big] = \min \{ k:\, \forall i \geq k\ m_i \neq e_i \}$.
\end{example}

\begin{example}\label{exm:gr-sg}
	Fix a finite alphabet $A$. For $k \in \N$, a \textit{$k$-parameter word} is an element of $(A \cup \{ x_1, x_2, \ldots, x_k \})^{<\omega}$ such that all the variables $x_1, x_2, \ldots, x_k$ appear, and their \textit{first} appearances are in increasing order. Let $W_k$ be the set of all $k$-parameter words.\footnote{So $W_0$ is simply $A^{<\omega}$, the set of all words over $A$.} Then, $W = \bigcup_{k=0}^\infty W_k$ is a layered semigroup\footnote{The layering map $\ell: W \to \N$ here maps every parameter word to the number of variables it contains.} under concatenation, called the \textit{Graham--Rothschild semigroup.}
\end{example}


We will also consider \textit{partial} semigroups---those for which the operation is not always defined. To exclude trivial cases, such as when the operation is \textit{never} defined, we have the following notion of \textit{adequacy} for a partial semigroup.

\begin{definition}
	A partial semigroup $S$ is \textit{adequate} \cite[Defn 1.15.6]{farahPartitionTheoremsLayered2002, hindmanAlgebraStoneCechCompactification2012} or \textit{directed} \cite{todorcevicIntroductionRamseySpaces2010} if, for any finite subset $F \subseteq S$, there exists $y$ such that $x+y$ is defined for all $x \in F$.
\end{definition}

We now generalise Definition \ref{defn:lay-sg} to the case of partial semigroups. However, it is not enough for just $S$ to be adequate---we need each layer to be adequate also.

\begin{definition}
	An \textit{adequate partial layered semigroup} is a partial semigroup $S$, with a \textit{layering map} $\ell: S \to \N$ such that:
	\begin{enumerate}
		\item For all $s,t \in S$ such that $s+t$ is defined, $\ell(s+t) = \max \{ \ell(s), \ell(t) \}$;
		\item For all $s_1,\ldots,s_n \in S$ with $\ell(s_1) = \cdots = \ell(s_n)$, there is $t \in S$ such that $\ell(t) = \ell(s_i)$ and $s_i + t$ is defined for all $i \leq n$.
	\end{enumerate}
\end{definition}

\begin{example}\label{exm:gow-sg}
	Let $\FIN$ be the set of all functions $f: \N \to \N$ with finite support, i.e.\ $\supp(f) \defeq \{ n \in \N : f(n) \neq 0 \}$ is finite. For $f, g \in \FIN$, we define $f + g$ pointwise iff the pointwise product $f(n) g(n) = 0$ for all $n \in \N$.\footnote{Equivalently, if $f$ and $g$ have disjoint supports.} Then, $\FIN$ is an adequate partial layered semigroup under the layering map $\ell(f) = \max(\range(f))$, called the \textit{Gowers semigroup.}
\end{example}

\begin{example}
	Later, we will consider nonstandard extensions of these semigroups. We use transfer to deduce what these extensions look like. For example, $\h \FIN_k$ will consist\footnote{Not all such functions are in $\h \FIN_k$---only those that are \textit{internal}. \cite[\S 2.5]{dinassoNonstandardMethodsRamsey2019} gives a good overview of internal/external objects. An understanding of these will not be necessary in this paper.} of functions $\varphi: \h \N \to [0,k]$ with \textit{hyperfinite support} (i.e. $\supp(\varphi) \subseteq [0,\xi]$ for some $\xi \in \h \N$) and having $\max(\range(\varphi)) = k$. This is clear from writing the definition of $\FIN_k$ as an elementary formula, and applying transfer.
\end{example}

\begin{remark}
	In general, $\bigcup_{i<\delta} \hS_i \subseteq \hS$, but this containment may be strict when $S$ has infinitely many layers. For example, in the Graham--Rothschild semigroup $W$ of Example \ref{exm:gr-sg}, $\h W$ consists of parameter words having \textit{hyperfinite} length\footnote{This is clear by considering a $k$-parameter word $w \in W$ as a function $w: [1,n] \to A \cup \{ x_1, x_2, \ldots, x_k \}$ for some $n \in \N$, and applying transfer.} $\xi$ and $\zeta$-many variables for some $\xi,\zeta \in \h\N$. In contrast, $\bigcup_{i=0}^\infty \h W_i$ is the subset of $\h W$ consisting of words with only \textit{finitely} many variables. We will never need to consider \textit{all} of $\hS$; just the $\hS_i$ will be enough for our purposes.
\end{remark}

\subsection{The \texorpdfstring{$*$}{*}-product\texorpdfstring{ $\Pi_S$}{}}
\label{sec:pi}

We will consider sequences $(\alpha_i)$ of nonstandard elements, where each $\ell(\alpha_i) = i$. For notational convenience, we define the following:

\begin{definition}\label{defn:pi}
	Given a \textit{total} layered semigroup $S$, define
	\begin{equation*}
	\Pi_S \defeq \prod_{i < \delta} \hS_i
	\end{equation*}
	where $\delta$ is the number of layers.	If it is clear what layered semigroup we are referring to, we may just use the notation $\Pi$.
\end{definition}

We now generalise Definition \ref{defn:pi} to adequate partial semigroups.

\begin{definition}
	For a subsemigroup $A \subseteq S$, we define
	\begin{equation*}
		\gA \defeq \{ \alpha \in \hA: x + \alpha \text{ is defined for all } x \in A \}
	\end{equation*}
\end{definition}

\begin{example}
	Let $\FIN$ be the Gowers semigroup of Example \ref{exm:gow-sg}. Then, for $k \in \N$, $\g\FIN_k$ consists of the \textit{cofinite} functions $\varphi \in \h\FIN_k$---those whose supports are disjoint from $\N$.
\end{example}

\begin{remark}
	In the case of partial adequate semigroups $S$, generally the whole of $\beta S$ is not considered, but only a special subset, which is notated $\gamma S$ \cite{todorcevicIntroductionRamseySpaces2010} or $\delta S$ \cite{hindmanAlgebraStoneCechCompactification2012, farahPartitionTheoremsLayered2002}. For each $A \subseteq S$, $\gA$ is the preimage of $\gamma A \subseteq \beta A$ under the ultrafilter map (Definition \ref{defn:u-eq}), hence the notation.
\end{remark}

The sets $\gA$ are useful because they have the following property:

\begin{proposition}
	If $\alpha \in \hA$ and $\beta \in \gA$, then $\alpha + \hbeta$ is \textit{always} defined.
\end{proposition}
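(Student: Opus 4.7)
The plan is a direct application of the transfer principle. By the very definition of $\gA$, the element $\beta \in \gA \subseteq \hA$ satisfies
\begin{equation*}
    \forall x \in A,\ x + \beta \text{ is defined},
\end{equation*}
which is an elementary statement with parameters $A$, the partial operation $+$ (extended to $\hS$), and $\beta$. The atomic subformula ``$y + z$ is defined'' simply asserts $(y,z) \in \dom(+)$, which is a plain set membership and therefore elementary.

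Now I invoke Axiom \ref{ax:transfer}, starring every parameter: the set $A$ becomes $\hA$, the partial operation becomes its iterated nonstandard extension, and $\beta$ becomes $\hbeta$. This yields
\begin{equation*}
    \forall x \in \hA,\ x + \hbeta \text{ is defined},
\end{equation*}
and instantiating at $x = \alpha \in \hA$ gives the conclusion.

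There is essentially no obstacle. The only bookkeeping worth flagging is the iterated star map: the operation that computes $\alpha + \hbeta$ lives on $\h\hS \times \h\hS$, which is exactly why $\beta$ must be replaced by $\hbeta$ on the right-hand factor, and why the requirement ``$\beta \in \gA$'' (rather than merely $\beta \in \hA$) is the precise input needed for transfer to lift definedness from $A \times \{\beta\}$ up to $\hA \times \{\hbeta\}$. If one is uneasy about mixing levels of the nonstandard hierarchy, one instead instantiates at $\halpha \in {}^\sigma \hA \subseteq \h\hA$ and uses $\halpha \sim \alpha$ (Proposition \ref{prop:dinasso}\ref{prop:f-alpha} together with the final clause of that proposition) to recover the statement as written.
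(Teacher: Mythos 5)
Your proof is correct and follows essentially the same route as the paper: express the defining property of $\gA$ as an elementary formula in the parameters $A$, the (extended) operation, and $\beta$, apply transfer to obtain $\forall x \in \hA$, $x + \hbeta$ is defined, and instantiate at $x = \alpha$. The only cosmetic difference is that the paper formalises ``$x+\beta$ is defined'' as $\exists y \in \hA\ (x+\beta = y)$ rather than as membership in $\dom(+)$; both are valid elementary renderings, and your closing remark about instantiating at $\halpha$ is unnecessary since $\alpha$ already lies in the transferred quantifier's range $\hA$.
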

	
\begin{proof}
	Since $\beta \in \gA$, the elementary formula $$\varphi(A,\hA,\beta)\ =\ \forall x \in A\ \ \exists y \in \hA\ \ x+\beta=y$$ holds, so by transfer, $$\varphi(\hA,\h\hA,\hbeta)\ =\ \forall x \in \hA\ \ \exists y \in \h\hA\ \ x+\hbeta=y$$ also holds. Letting $x = \alpha$ gives the result.
\end{proof}

\begin{proposition}
	If $A \subseteq S$ is adequate, then $\gA$ is nonempty.
\end{proposition}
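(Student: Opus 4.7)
The plan is to produce an element of $\gA$ by a standard saturation argument applied to an appropriate family of sets with the finite intersection property. For each $x \in A$, define
\[
    B_x \defeq \{ y \in A : x+y \text{ is defined} \}.
\]
The adequacy hypothesis on $A$ is precisely the statement that the family $\{ B_x : x \in A \}$ has the finite intersection property: for any finite $F \subseteq A$, adequacy yields some $y$ with $x+y$ defined for every $x \in F$, so $y \in \bigcap_{x \in F} B_x$.

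Next, I would apply transfer to the elementary formula defining $B_x$. Since the domain of the partial operation $+$ is an object of $\V$, transfer gives
\[
    \hB_x = \{ \alpha \in \hA : x + \alpha \text{ is defined} \}
\]
for each $x \in A$. By transfer applied to finite intersections (or simply because nonstandard extension preserves nonemptiness of finitely indexed intersections), the family $\{ \hB_x : x \in A \}$ still has the finite intersection property.

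Finally, I would invoke the saturation property of the ambient nonstandard universe assumed in the construction of \cite{dinassoNonstandardMethodsRamsey2019} (which is implicit in the framework of \S\ref{sec:nonst}) to conclude
\[
    \bigcap_{x \in A} \hB_x \;\neq\; \emptyset.
\]
Any $\alpha$ in this intersection lies in $\hA$ and satisfies $x+\alpha$ is defined for every $x \in A$, so $\alpha \in \gA$ by definition.

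The only real obstacle here is invoking a saturation principle that is not explicitly listed among Axioms \ref{axs:basic} and \ref{ax:transfer}; in practice the relevant instance is countable saturation, since in all our applications the subsemigroup $A$ is countable (indeed $S$ itself will be), and this is built into the standard superstructure models cited. Apart from this background hypothesis on the star map, the argument is a direct translation of adequacy into the finite intersection property and then into a nonempty intersection of the $\hB_x$.
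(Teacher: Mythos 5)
Your proof is correct and follows essentially the same route as the paper: the paper defines the same sets $K_s = \{ y \in A : s+y \text{ is defined} \}$, writes $\gA = \bigcap_{s \in A} \hK_s$, derives the finite intersection property from adequacy via $\bigcap_{s \in F} \hK_s = \h\big[\bigcap_{s \in F} K_s\big]$, and concludes by compactness of $\hA$ in the $u$-topology. The only cosmetic difference is that you invoke saturation directly where the paper invokes compactness of $\hS$, but that compactness is itself just the enlargement/saturation property in topological clothing, so your closing remark correctly identifies the one background hypothesis both arguments rely on.
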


\begin{proof}
	For each $s \in A$, let $K_s = \{ y \in A: s+y \text{ is defined} \}$. Then we have
	\begin{equation*}
		\gA = \bigcap_{s \in A} \hK_s
	\end{equation*}
	By adequacy, each $K_s$ is nonempty, so each $\hK_s$ is nonempty, and closed in the $u$-topology on $A$. The collection $\{ \hK_s: s \in A \}$ has the finite intersection property, since for any finite $F \subseteq A$:
	\begin{equation*}
		\bigcap_{s \in F} \hK_s\ =\ \h\hspace{-0.3em} \left[\, \bigcap_{s \in F} K_s \right]
	\end{equation*}
	which is nonempty by adequacy of $A$. The result follows by compactness of $\hA$ (it is a closed subset of the compact space $\hS$).
\end{proof}

\begin{definition}\label{defn:pi2}
	Given an adequate layered semigroup $S$, define
	\begin{equation*}
		\Pi_S \defeq \prod_{i < \delta} \gS_i
	\end{equation*}
	where $\delta$ is the number of layers.	If it is clear what layered semigroup we are referring to, we may just use the notation $\Pi$.
\end{definition}

Notice that if $S$ is total, then $\gA = \hA$ for any $A \subseteq S$, so Definitions \ref{defn:pi} and \ref{defn:pi2} coincide.



\begin{proposition}\label{prop:pi-compact}
	For any layered semigroup $S$, $\Pi_S$ is a compact $u$-semigroup, with the product topology and operation defined componentwise.
\end{proposition}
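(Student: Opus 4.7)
The plan is to check the three defining clauses of a compact $u$-semigroup in turn, interpreting the $u$-topology, $u$-equivalence, operation and star map on $\Pi_S$ all componentwise. For $\alpha = (\alpha_i),\ \beta = (\beta_i) \in \Pi_S$ I would set $\h\beta \defeq (\h\beta_i)$ and $\alpha + \h\beta \defeq (\alpha_i + \h\beta_i)$; transfer of the layering identity $\ell(s+t) = \max\{\ell(s),\ell(t)\}$ keeps each coordinate in the correct layer $\hS_i$, so the componentwise operation genuinely lands in $\Pi_S$.

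For \textbf{compactness}, each $S_i \subseteq S$ makes $\hS_i$ a basic clopen set in the $u$-topology on $\hS$, hence closed in the compact space $\hS$ and therefore compact. In the partial case $\gS_i = \bigcap_{s \in S_i} \hK_s$ (the same presentation used above to prove $\gA \neq \emptyset$), which is a closed subset of $\hS_i$ and therefore compact; in the total case $\gS_i = \hS_i$. Tychonoff's theorem then yields compactness of $\Pi_S = \prod_{i<\delta} \gS_i$ in the product topology.

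For \textbf{closure up to $\sim$}, observe that each layer $S_i$ is itself a subsemigroup (resp.\ an adequate partial subsemigroup) of $S$: closure under the operation follows from the layering identity, and, in the partial case, clause (ii) of the definition of adequate partial layered semigroup provides adequacy of $S_i$. The theory of \S\ref{sec:u-semi} therefore makes each $\gS_i$ a compact $u$-semigroup in its own right, giving $\gamma_i \in \gS_i$ with $\gamma_i \sim \alpha_i + \h\beta_i$; the tuple $\gamma = (\gamma_i)$ lies in $\Pi_S$ and is $\sim$-equivalent to $\alpha + \h\beta$ under the natural componentwise product $\sim$-relation. \textbf{Left-continuity} of $\alpha \mapsto \alpha + \h\beta$ reduces similarly: in each coordinate this is right-translation by $\h\beta_i$ in $\hS_i$, which is $u$-continuous by the last proposition of \S\ref{sec:u-semi}, and a map into a product space is continuous iff each coordinate projection is.

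The main obstacle is organisational rather than computational: one must verify that the abstract machinery of \S\ref{sec:u-semi} does apply to each layer $\gS_i$ (i.e.\ that $S_i$ qualifies as a semigroup or adequate partial semigroup on its own), and that the product $u$-topology together with the componentwise $\sim$-relation on $\Pi_S$ really do satisfy the three compact-$u$-semigroup clauses. Once this dictionary is in place, every verification reduces to a per-coordinate fact already proved, together with Tychonoff for the compactness step.
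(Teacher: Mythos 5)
Your proof is correct and is essentially the argument the paper leaves implicit: the proposition is stated without proof, and the intended verification is exactly the componentwise one you give (each layer $S_i$ is a subsemigroup by the layering identity and is adequate by clause (ii) of the partial layered definition, each $\gS_i$ is a closed hence compact $u$-subsemigroup, and Tychonoff plus per-coordinate checks handle the three clauses). One small citation slip: the continuity of $\alpha_i \mapsto \alpha_i + \h\beta_i$ is not an instance of the proposition on continuity of $\hf$ for standard $f$ (right translation by a nonstandard element is not the star of a standard map); it is clause (iii) of the definition asserting that $\hS$ is a compact $u$-semigroup, so the fact you need is available, just from a different place.
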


\section{Regressive maps}
\label{sec:regr}

Our Ramsey-type results will be in the context of certain functions acting on layered semigroups. We will require our functions $f: S \to S$ to have the following properties.

\begin{definition}\label{defn:regr}
	A \textit{regressive map} is a function $f: S \to S$ such that for all $s, t \in S$:
	\begin{enumerate}
		\item $f(st) = f(s)f(t)$, i.e.\ $f$ is a semigroup homomorphism;
		\item $\ell(f(s)) \leq \ell(s)$;
		\item $\ell(s) \leq \ell(t) \implies \ell(f(s)) \leq \ell(f(t))$;
		\item $\abs{\ell(f(s)) - \ell(f(t))}\ \leq\ \abs{\ell(s) - \ell(t)}$;
	\end{enumerate}
\end{definition}

\begin{remark}
	Property (i) seems natural, since we are dealing with semigroups. The ``layering'' of our semigroups motivates property (ii). More specifically, in \S\ref{sec:main}, we will inductively construct ``coherent'' sequences $\alpha^{(0)} \in \gS_0,\ \alpha^{(1)} \in \gS_1,\ \ldots$ which are closed under all the functions $f$ under consideration. Property (ii) makes such a construction tractable, since we can build each $\alpha^{(k)}$ based only on the $\alpha^{(j)}$ for $j < k$.
	
	Properties (iii) and (iv) are harder to motivate, but they are essential to the proof of Lemma \ref{lem:seq-sum}, which plays a big part in the proof of Theorem \ref{thm:coh->ram}. We will see below that many natural examples of maps on layered semigroups satisfy these two properties.
\end{remark}

We will consider sequences which are ``well-behaved'' with respect to a \textit{collection} $\F$ of regressive maps on $S$. Generally, $\F$ will be closed under composition---however, this is not required for our arguments to work. To make effective use of the transfer principle, we do require the following:

\begin{definition}
Let $\F$ be a collection of regressive maps $f: S \to S$. $\F$ is \textit{locally finite} (l.f.) if for all $i \in \N$, the set $\F_i = \{ f\vert_{S_{\leq i}} : f \in \F \}$ is finite.
\end{definition}

\begin{example}\label{exm:gr-maps}
	Let $W$ be the Graham--Rothschild semigroup of Example \ref{exm:gr-sg}. Now, we consider infinite parameter words $\tilde{w}$ - elements of $(A \cup \{ x_1, x_2, \ldots \})^\omega$ such that all $x_i$ appear, with their initial appearances in increasing order.
	
	Every such $\tilde{w}$ defines a function $W \to W$, $u \mapsto u[\tilde{w}]$, called the \textit{substitution map}, by replacing each occurrence of $x_i$ in $u$ with the $i$th character in $\tilde{w}$. Each substitution map is a regressive map, and the collection $\F$ of all such maps is locally finite, and closed under composition.
\end{example}

\begin{example}\label{exm:gow-maps}
	Let $\FIN$ be the Gowers semigroup of Example \ref{exm:gow-sg}. Every $F: \N \to \N$ induces a map $\tilde{F}: \FIN \to \FIN$ by composition, i.e. $\tilde{F}(f) = F \circ f$. When $F$ is a nondecreasing surjection, $\tilde{F}$ is a regressive map, and the collection $\F$ of all such maps is locally finite, and closed under composition. These maps, first considered in \cite{bartosovaGowersRamseyTheorem2017}, are called \textit{(generalised) tetris operations.}
\end{example}

\subsection{Special sequences\texorpdfstring{ in $\Pi_S$}{}}
\label{sec:seq}

Throughout this section, fix a layered semigroup $S$ and a locally finite collection of regressive maps $\F$ on $S$. Essential to the proof of Theorem \ref{thm:coh->ram} is the following notion of \textit{coherence} for elements of $\Pi_S$.

\begin{definition}\label{defn:coh}
	An element $(\alpha_i)_{i < \delta} \in \Pi_S$ is \textit{$\F$-coherent} if for all $f \in \F$ and $j < \delta$, we have $f(\alpha_j) \sim \alpha_k$ for some\footnote{Such a $k$ is uniquely defined.} $k \leq j$.
\end{definition}

Intuitively, $(\alpha_i)$ is $\F$-coherent if it is closed under all functions $f \in \F$. Definition \ref{defn:coh} doesn't say anything about the product of elements in $(\alpha_i)$, so a stronger notion of coherence is needed to draw conclusions about products.

\begin{definition}\label{defn:f-ram}
	An $\F$-coherent element $(\alpha_i)_{i < \delta} \in \Pi_S$ is \textit{$\F$-Ramsey} if, for all $i \leq j < \delta$, $\alpha_i \succcurlyeq \alpha_j$.\footnote{For the case $i=j$, this implies $\alpha_i$ is $u$-idempotent.} We say $S$ itself is \textit{$\F$-Ramsey} if $\Pi_S$ contains an $\F$-Ramsey element. 
\end{definition}

\begin{remark}
	Sequences of ultrafilters satisfying the analogue of Definition \ref{defn:f-ram} (as well as being $\preccurlyeq$-minimal) are termed \textit{reductive} in \cite{hindmanCombiningExtensionsHalesJewett2019}.
\end{remark}

%
%

\section{Main results}
\label{sec:main}

Throughout this section, fix a layered semigroup $S$ and a locally finite collection of regressive maps $\F$ on $S$. The existence of an $\F$-Ramsey in $\Pi_S$ is a powerful statement---it implies that general Ramsey statements (Theorems \ref{thm:ram1} and \ref{thm:ram2}) are true of our layered semigroup $S$. When put into context, these general statements reduce to familiar results of Ramsey theory. Before proving these statements, we need the following concept:

\begin{definition}
	A sequence $(x_i) \subseteq S$ is called a \textit{block sequence} \cite{gowersLipschitzFunctionsClassical1992,lupiniGowersRamseyTheorem2017} or \textit{basic sequence} \cite[Thm 2.20]{todorcevicIntroductionRamseySpaces2010} if, for any $n_0 < \cdots < n_{\ell-1}$ and $f_0, \ldots, f_{\ell-1} \in \F$, $f_0(x_{n_0}) + \cdots + f_{\ell-1}(x_{n_{\ell-1}})$ is defined.
\end{definition}

\begin{remark}
	The above definition has content only when $S$ is partial - every sequence in a total semigroup is a block sequence.
\end{remark}

\begin{theorem}\label{thm:ram1}
	Suppose $S$ is $\F$-Ramsey with $\delta$ layers, and $n < \delta$. Then, for any finite colouring of $S$, there exists a block sequence $(x_i)_{i=1}^\infty \subseteq S_n$ such that for every $k \leq n$, the following set is monochromatic:
	\begin{equation*}
		S_k\ \cap\ \big\{\ f_1(x_{n_1}) + \cdots + f_\ell(x_{n_\ell})\ :\ 1 \leq n_1 < \cdots < n_\ell,\ f_1, \ldots, f_\ell \in \F\ \big\}
	\end{equation*}
\end{theorem}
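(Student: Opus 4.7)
Let $(\alpha_i)_{i<\delta} \in \Pi_S$ be $\F$-Ramsey. Given any finite colouring of $S$, for each $k \leq n$ the finite partition of $S_k$ into colour classes selects a unique class---call it $A_k$---with $A_k \in \U_{\alpha_k}$, i.e.\ $\alpha_k \in \hA_k$. By property (iv) of Definition \ref{defn:regr}, each $f \in \F$ maps $S_n$ into a single layer $S_{m(f,n)}$, so any sum $f_1(x_{n_1}) + \cdots + f_\ell(x_{n_\ell})$ with $x_i \in S_n$ lies in $S_k$ for $k = \max_i m(f_i, n)$. The goal is to build $(x_i)_{i \geq 1} \subseteq S_n$ inductively so that every such sum lies in $A_k$ for the appropriate $k$; monochromaticity of $S_k \cap \{\text{sums}\}$ then follows since $A_k$ is a single colour class.

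The engine is a Hindman-style ``starred'' version of each $A_k$. For $k \leq n$ set
\begin{equation*}
    A_k^\star \defeq \bigl\{ s \in A_k : s + \alpha_m \in \hA_{\max(k,m)} \text{ for all } m \leq n \bigr\}.
\end{equation*}
Unpacking the $\F$-Ramsey relations $\alpha_i \succcurlyeq \alpha_j$ ($i \leq j$) via the definition of $\preccurlyeq$ yields the key identity
\begin{equation*}
    \alpha_k + \halpha_m\ \sim\ \alpha_{\max(k,m)} \qquad (k, m \leq n);
\end{equation*}
when $k \leq m$, $\alpha_m \preccurlyeq \alpha_k$ gives $\alpha_k + \halpha_m \sim \alpha_m$, and the case $k \geq m$ is symmetric. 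Together with coherence ($f(\alpha_n) \sim \alpha_{m(f,n)}$) and Proposition \ref{prop:dinasso}(iv),(v), this immediately gives $\alpha_k \in \hA_k^\star$, and, crucially, the \emph{absorption property}: for $s \in A_k^\star$ and $m \leq n$, $s + \alpha_m \in \hA_{\max(k,m)}^\star$. Indeed, for any $m' \leq n$,
\begin{equation*}
    (s + \alpha_m) + \halpha_{m'}\ \sim\ s + (\alpha_m + \halpha_{m'})\ \sim\ s + \alpha_{\max(m, m')},
\end{equation*}
which lies in $\hA_{\max(k,m,m')}$ by the defining property of $A_k^\star$ applied to $s$.

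Now construct $(x_i)$ by induction, with invariant: every partial sum $y = f_1(x_{n_1}) + \cdots + f_r(x_{n_r})$ formed from $x_1, \ldots, x_j$ lies in $A_{\ell(y)}^\star$. Given $x_1, \ldots, x_j$, let $B_{j+1} \subseteq S_n$ be the intersection of the finitely many conditions $f(s) \in A_{m(f,n)}^\star$ ($f \in \F_n$) and $y + f(s) \in A_{\max(\ell(y), m(f,n))}^\star$ ($y$ a stage-$j$ sum, $f \in \F_n$). To show $\alpha_n \in \hB_{j+1}$, transfer each condition to a statement about $y + f(\alpha_n)$: since $\U_{y + f(\alpha_n)}$ depends only on $y$ and $\U_{f(\alpha_n)} = \U_{\alpha_{m(f,n)}}$ (by coherence), the condition reduces to $y + \alpha_{m(f,n)} \in \hA_{\max(\ell(y), m(f,n))}^\star$, which is exactly the absorption property applied to the inductive hypothesis $y \in A_{\ell(y)}^\star$. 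So $B_{j+1}$ is nonempty, and any $x_{j+1} \in B_{j+1}$ maintains the invariant.

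The main obstacle is the algebraic bookkeeping, especially the key identity $\alpha_k + \halpha_m \sim \alpha_{\max(k,m)}$ and the manipulation of sums of elements living at different iterated-star levels---this is where Proposition \ref{prop:dinasso}(iv),(v) and the identifications of u-classes through the hierarchy do the work. In the partial-semigroup case, one must additionally verify that the chosen $x_i$ form a \emph{block sequence}; this follows from $\alpha_n \in \gS_n$, which forces all the sums $y + f(\alpha_n)$ to be defined, so the definedness clauses transfer into extra (finite) conditions on $B_{j+1}$ without affecting the above argument.
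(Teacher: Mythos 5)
Your proposal is correct and follows essentially the same route as the paper: fix an $\F$-Ramsey sequence, take the colour classes $A_k$ determined by $c(\alpha_k)$, and build $(x_i)$ inductively by transfer, using the identity $\alpha_k+\halpha_m\sim\alpha_{\max(k,m)}$ (equivalently $f(\alpha_n)+\hf'(\alpha_n)\sim f(\alpha_n)$ or $\sim f'(\alpha_n)$) to verify that $\alpha_n$ witnesses each finite conjunction of conditions. The only cosmetic difference is that you package the inductive hypothesis via explicit Galvin--Glazer-style starred sets $A_k^\star$, whereas the paper carries the equivalent conditions ``$y+f(\alpha_n)$ is defined and in $\hA$'' directly in the induction.
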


We sketch the proof. Note that for an $\F$-Ramsey $(\alpha_i)_{i < \delta} \in \Pi$, the constant sequence $(\alpha_n, \alpha_n, \alpha_n, \ldots)$ effectively satisfies the conclusion of the theorem, except that it is in $\gS_n$ rather than $S_n$. Given the existence of $\alpha_n$, we repeatedly apply transfer to deduce the existence of each $x_i$.

\begin{proof}[Proof of Theorem \ref{thm:ram1}]
	Fix an $\F$-Ramsey sequence $(\alpha_i)_{i < \delta} \in \Pi$. For each $i < \delta$, let $A_i = \big\{ x \in S_i :\ c(x) = c(\alpha_i) \big\}$, and $A = \bigcup_{i < \delta} A_i$.
	
	$\alpha_n$ witnesses that the sentence\footnote{This is a finite sentence, since $\F$ is locally finite by assumption.}
	\begin{equation*}
		\exists\ \tau \in \hS_n:\ \bigwedge_{f_1,f_2 \in \F_n} \big( f_1(\tau) \in \hA\ \land\ f_1(\tau) + \hf_2(\alpha_n) \text{ is defined and in } \h\hA \big)
	\end{equation*}
	is true, so by transfer, there exists $x_1 \in S_n$ such that, for all $f_1, f_2 \in \F$, $f_1(x_1) \in A$, and $f_1(x_1) + f_2(\alpha_n)$ is defined and in $\hA$.
	
	We also have $f_1(x_1) + f_2(\alpha_n) + \hf_3(\alpha_n) \in \h\hA$ for any choice of $f_1, f_2, f_3 \in \F$. This is because $f_2(\alpha_n) \sim \alpha_i$ and $f_3(\alpha_n) \sim \alpha_j$ for some $i,j \leq n$, so either $f_2(\alpha_n) + \hf_3(\alpha_n) \sim \alpha_i \sim f_2(\alpha_n)$, or $f_2(\alpha_n) + \hf_3(\alpha_n) \sim \alpha_j \sim f_3(\alpha_n)$. Hence, $\alpha_n$ witnesses the truth of
	\begin{flalign*}
		\exists\ \tau \in \hS_n:\ \bigwedge_{f_1,f_2,f_3 \in \F_n} \big(\ f_1(\tau) \in \hA\ \land\ f_1(&\tau) + \hf_2(\alpha_n) \text{ is defined and in } \h\hA\ \\[-3mm]
		\land\ f_1(x_1) + f_2(&\tau) \text{ is defined and in } \hA\ \\
		\land\ f_1(x_1) + f_2(&\tau) + \hf_3(\alpha_n) \text{ is defined and in } \h\hA\ \big)
	\end{flalign*}
	whence by transfer, we get $x_2 \in S_n$ with similar properties.
	
	In general, suppose we have defined $x_1, \ldots, x_{j-1} \in S_n$ such that for all $\ell < j$, all $n_1 < \cdots < n_\ell < j$, and all $f_1, \ldots, f_\ell, f \in \F$, we have
	\begin{flalign*}
		&f_1(x_{n_1}) + \cdots + f_\ell(x_{n_\ell}) \text{ is defined and in } A \\
		&f_1(x_{n_1}) + \cdots + f_\ell(x_{n_\ell}) + f(\alpha_n) \text{ is defined and in } \hA
	\end{flalign*}
	Then, by a similar argument to before, it is also true that $f_1(x_{n_1}) + \cdots + f_\ell(x_{n_\ell}) + f(\alpha_n) + \hf'(\alpha_n)$ is defined and in $\h\hA$ for any choice of $f' \in \F$. Thus, $\alpha_n$ witnesses the truth of
	\begin{flalign*}
		\exists\ \tau \in \hS_n:\ \bigwedge_{\ell < j} \quad \bigwedge_{n_1 < \cdots < n_\ell < j} \quad &\bigwedge_{f_1, \ldots, f_\ell, f, f' \in \F} \\[1mm]
		\hspace*{10mm} \big(\ f_1(x_{n_1}) + \cdots + f_\ell(x_{n_\ell}) + f(&\tau) \text{ is defined and in } \hA \\
		\hspace*{20mm} \land\ f_1(x_{n_1}) + \cdots + f_\ell(x_{n_\ell}) + f(&\tau) + \hf'(\alpha_n) \text{ is defined and in } \h\hA\ \big)
	\end{flalign*}
	so by transfer, we get $x_j \in S_n$ with similar properties. Continue ad infinitum.
\end{proof}

\begin{theorem}\label{thm:ram2}
	Suppose $S$ is $\F$-Ramsey with $\delta$ layers. Then, for any finite colouring of $S$, there exists a block sequence $(x_i)_{i < \delta} \in \prod_{i < \delta} S_i$ such that for every $k < \delta$, the following set is monochromatic:
	\begin{equation*}
		S_k\ \cap\ \big\{\ f_1(x_{n_1}) + \cdots + f_\ell(x_{n_\ell})\ :\ 0 \leq n_1 < \cdots < n_\ell < \delta,\ f_1, \ldots, f_\ell \in \F\ \big\}
	\end{equation*}
\end{theorem}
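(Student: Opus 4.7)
The plan is to adapt the inductive scheme of Theorem \ref{thm:ram1}, the key difference being that at stage $j$ we construct $x_j \in S_j$ rather than in a fixed top layer, and correspondingly use $\alpha_j$ (rather than a single $\alpha_n$) as the nonstandard witness. I first fix an $\F$-Ramsey sequence $(\alpha_i)_{i<\delta} \in \Pi_S$ and set $A_i = \{ x \in S_i : c(x) = c(\alpha_i) \}$, $A = \bigcup_{i<\delta} A_i$, so that the monochromaticity claim in each layer $S_k$ reduces to showing all relevant sums lie in $A$.

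At stage $j$, having constructed $x_0 \in S_0, \ldots, x_{j-1} \in S_{j-1}$ satisfying (I) that every sum $f_1(x_{n_1}) + \cdots + f_\ell(x_{n_\ell})$ is in $A$ and (II) that $f_1(x_{n_1}) + \cdots + f_\ell(x_{n_\ell}) + f(\alpha_j) \in \hA$ for all increasing $n_1 < \cdots < n_\ell < j$ and $f_i, f \in \F_j$, I would verify at $\tau = \alpha_j$ the elementary sentence
\[ \exists\ \tau \in \hS_j :\ \bigwedge \bigl(\Sigma + f(\tau) \in \hA\ \land\ \Sigma + f(\tau) + \hf'(\alpha_{j+1}) \in \h\hA\bigr), \]
a finite conjunction over the tuples $(n_i), (f_i), f, f' \in \F_j$ by local finiteness (here $\Sigma = f_1(x_{n_1}) + \cdots + f_\ell(x_{n_\ell})$). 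The first conjunct at $\tau = \alpha_j$ is exactly invariant (II); the second follows from $\F$-coherence (writing $f(\alpha_j) \sim \alpha_k$ and $f'(\alpha_{j+1}) \sim \alpha_{k'}$) combined with the absorption $\alpha_k + \halpha_{k'} \sim \alpha_{\max(k,k')}$ supplied by the $\F$-Ramsey order, which reduces the tail to either $\Sigma + f(\alpha_j)$ or $\Sigma + f'(\alpha_{j+1})$ up to $\sim$, and hence back into $\hA$ via the corresponding case of (II). Transfer then yields $x_j \in S_j$ extending (I) and (II) to stage $j+1$; iterating through all $j<\delta$ produces the desired block sequence.

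The main obstacle I anticipate is propagating invariant (II) cleanly between consecutive stages, since at stage $j+1$ the first conjunct demands $\Sigma' + f(\alpha_{j+1}) \in \hA$ for sums $\Sigma'$ with indices $\leq j$, not merely those ending in $x_j$. The $x_j$-ending case is covered by transferring the second conjunct at stage $j$; for earlier-ending $\Sigma'$, one must argue that replacing $\alpha_j$ by $\alpha_{j+1}$ in previously-established assertions preserves $\hA$-membership, which again reduces to the same absorption principle but requires more bookkeeping. A clean implementation may call for including a finite tower of future witnesses $\alpha_{j+1}, \alpha_{j+2}, \ldots$ in each stage's formula so that the induction advances uniformly across the countably many stages when $\delta=\omega$.
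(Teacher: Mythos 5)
Your overall scheme is the paper's: the proof given there is literally the proof of Theorem \ref{thm:ram1} with $\alpha_n$ replaced by $\alpha_j$ and $S_n$ by $S_j$ at stage $j$, and your first two paragraphs reproduce that faithfully, including the correct use of the absorption $\alpha_k + \halpha_{k'} \sim \alpha_{\max(k,k')}$ supplied by the $\F$-Ramsey order. But the obstacle you flag at the end is a genuine gap, and neither of your proposed patches closes it. The problem is exactly the propagation of invariant (II): for a combination $\Sigma$ whose top index is $t$, stage $t$ only delivers $\Sigma + f(\alpha_{t+1}) \in \hA$ (from the second conjunct of the transferred sentence), and you must upgrade this to $\Sigma + f(\alpha_m) \in \hA$ for \emph{every} later $m$. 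The ``finite tower of future witnesses'' cannot do this when $\delta = \omega$: making stage $t$'s sentence self-propagating would require conjuncts referencing all of $\alpha_{t+1}, \alpha_{t+2}, \ldots$, which is no longer a finite conjunction, and $m \mapsto \alpha_m$ is an external sequence, so it cannot be folded into a single internal formula either.

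The absorption patch also falls short. Writing $f(\alpha_m) \sim \alpha_k$ and $f(\alpha_{m+1}) \sim \alpha_{k'}$, conditions (iii) and (iv) of Definition \ref{defn:regr} (transferred) give $k' \in \{k, k+1\}$, exactly as in Lemma \ref{lem:seq-sum}. The case $k'=k$ is fine, but when $k'=k+1$ you need $\Sigma + \alpha_{k+1} \in \hA$, and the only handle is $\alpha_{k+1} \sim \alpha_k + \halpha_{k+1}$, i.e.\ $\Sigma + \alpha_{k+1} \sim (\Sigma + \alpha_k) + \halpha_{k+1}$. Knowing $\Sigma + \alpha_k \in \hA$ does not yield $(\Sigma + \alpha_k) + \halpha_{k+1} \in \h\hA$: for that you would need $\Sigma + \alpha_k$ to lie in $\h B$ where $B = \{ y : y + \alpha_{k+1} \in \hA \}$, and while $B$ is $\U_{\alpha_k}$-large (because $\alpha_k + \halpha_{k+1} \sim \alpha_{k+1} \in \hA$), nothing in the construction places the particular point $\Sigma + \alpha_k$ inside $\h B$. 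In ultrafilter terms, each condition ``$\Sigma + \alpha_m \in \hA$'' asks that $x_t$ lie in a certain $\U_{\alpha_t}$-large set, one for each $m > t$, and an ultrafilter is closed only under finite intersections. So the induction as you (and the paper) set it up does not visibly close when $\delta = \omega$; repairing it seems to require extra input, e.g.\ finitely many layers, stabilisation of the reachable indices $\{k : f(\alpha_m) \sim \alpha_k,\ f \in \F\}$, or the $\preccurlyeq$-minimality used by Carlson--Hindman--Strauss. To be fair, the paper's own two-line proof elides exactly the same point, so you have identified a real issue rather than introduced one.
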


\begin{proof}
	Identical to that of Theorem \ref{thm:ram1}, but at each stage when defining $x_i$, replace $\alpha_n$ with $\alpha_i$, and $S_n$ with $S_i$. Continue up to stage $\delta$.
\end{proof}

Theorem \ref{thm:ram1} easily implies a finite version:

\begin{theorem}\label{thm:ram-fin-seq}
	Suppose $S$ is $\F$-Ramsey with $\delta$ layers, and $n < \delta$, $m \in \N$. Then, for any finite colouring of $S$, there exists a block sequence $(x_1,\ldots,x_m) \subseteq S_n$ of length $m$ such that for every $k \leq n$, the following set is monochromatic:
	\begin{equation*}
		S_k\ \cap\ \big\{\ f_1(x_{n_1}) + \cdots + f_\ell(x_{n_\ell})\ :\ n_1 < \cdots < n_\ell \leq m,\ f_1, \ldots, f_\ell \in \F\ \big\}
	\end{equation*}
\end{theorem}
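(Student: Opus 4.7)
The plan is to deduce the finite version directly from Theorem \ref{thm:ram1} by truncation. Given a finite colouring $c$ of $S$, Theorem \ref{thm:ram1} supplies an infinite block sequence $(x_i)_{i=1}^\infty \subseteq S_n$ such that, for every $k \le n$, the set
\begin{equation*}
    S_k \cap \bigl\{\, f_1(x_{n_1}) + \cdots + f_\ell(x_{n_\ell})\ :\ 1 \leq n_1 < \cdots < n_\ell,\ f_1, \ldots, f_\ell \in \F\, \bigr\}
\end{equation*}
is monochromatic under $c$.

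First, I would simply take $(x_1, \ldots, x_m)$ to be the initial segment of length $m$ of this infinite block sequence. Any initial segment of a block sequence is a block sequence (every combination it gives rise to is in particular a combination available in the full sequence, whose defined-ness is guaranteed). Hence $(x_1, \ldots, x_m) \subseteq S_n$ is itself a block sequence.

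Next, for each $k \leq n$, the set appearing in the conclusion of Theorem \ref{thm:ram-fin-seq},
\begin{equation*}
    S_k \cap \bigl\{\, f_1(x_{n_1}) + \cdots + f_\ell(x_{n_\ell})\ :\ n_1 < \cdots < n_\ell \leq m,\ f_1, \ldots, f_\ell \in \F\, \bigr\},
\end{equation*}
is a subset of the infinite set above, since the constraint $n_\ell \leq m$ only restricts the allowed tuples of indices. A subset of a monochromatic set is monochromatic, so we are done.

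\textbf{Main obstacle.} There is essentially no obstacle: the result is an immediate corollary of Theorem \ref{thm:ram1} once one observes that the finite combinations form a subfamily of the infinite ones. The only subtlety worth mentioning is the harmless check that truncating a block sequence leaves a block sequence, which follows directly from the definition.
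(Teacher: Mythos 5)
Your proposal is correct and matches the paper's intent exactly: the paper gives no separate proof, simply noting that Theorem \ref{thm:ram1} ``easily implies'' the finite version, and your truncation argument (an initial segment of a block sequence is a block sequence, and a subset of a monochromatic set is monochromatic) is precisely that easy implication.
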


\begin{corollary}[\textnormal{$m=1$}]\label{cor:ram-fin}
	Suppose $S$ is $\F$-Ramsey with $\delta$ layers, and $n < \delta$. Then, for any finite colouring of $S$, there exists $x \in S_n$ such that for every $k \leq n$, $S_k \cap \{ f(x): f \in \F \}$ is monochromatic.
\end{corollary}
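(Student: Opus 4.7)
The plan is to obtain Corollary \ref{cor:ram-fin} as the $m = 1$ specialisation of Theorem \ref{thm:ram-fin-seq}, as is suggested by the tag on the corollary. Applying Theorem \ref{thm:ram-fin-seq} with $m = 1$ to the given finite colouring of $S$ produces a block sequence $(x_1) \subseteq S_n$ consisting of a single element such that for every $k \leq n$, the set
\begin{equation*}
    S_k \cap \big\{\, f_1(x_{n_1}) + \cdots + f_\ell(x_{n_\ell})\ :\ n_1 < \cdots < n_\ell \leq 1,\ f_1, \ldots, f_\ell \in \F \,\big\}
\end{equation*}
is monochromatic. Setting $x = x_1$ will then give the required element of $S_n$.

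All that remains is to verify that the above set coincides with $S_k \cap \{ f(x) : f \in \F \}$. The combinatorial constraint $1 \leq n_1 < \cdots < n_\ell \leq 1$ admits only the solution $\ell = 1$, $n_1 = 1$, so every ``sum'' in the display collapses to a single term $f_1(x_1) = f_1(x)$. Letting $f_1$ range over $\F$ recovers exactly $\{ f(x) : f \in \F \}$, whence the monochromaticity provided by Theorem \ref{thm:ram-fin-seq} is exactly the conclusion of Corollary \ref{cor:ram-fin}. Moreover, the block-sequence hypothesis on $(x_1)$ is vacuous, since a sequence of length one imposes no definedness conditions on nontrivial sums, and each $f(x)$ is automatically defined because $f : S \to S$ is a (total) function by Definition \ref{defn:regr}.

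There is essentially no obstacle at the level of this corollary: its content has already been packaged into Theorem \ref{thm:ram-fin-seq} (which in turn follows from Theorem \ref{thm:ram1} by truncating the inductive construction of the $x_i$ at stage $m$). The only care needed is the index bookkeeping, to check that the partial-sum notation of the theorem reduces correctly to the single-function notation of the corollary when $m = 1$.
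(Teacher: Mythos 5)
Your proposal is correct and coincides with the paper's own treatment: the corollary is stated precisely as the $m=1$ instance of Theorem \ref{thm:ram-fin-seq}, and your index bookkeeping showing that the constraint $n_1 < \cdots < n_\ell \leq 1$ forces $\ell = 1$ so the sums collapse to single terms $f(x)$ is exactly the (implicit) reduction intended. Nothing further is needed.
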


Incredibly, given the existence of \textit{any} $\F$-coherent element in $\Pi_S$, we can construct an $\F$-Ramsey, and thus show that the general Ramsey statements above hold in $S$. The following lemma is essential to this construction.

\begin{lemma}\label{lem:seq-sum}
	Suppose that $(\alpha_i)_{i < \delta} \in \Pi_S$ is $\F$-coherent and $u$-idempotent. Then, for any $f \in \F$ and $k < \delta$, there is $j = j_k \leq k$ such that
	\begin{equation*}
		f \big[ \alpha_k + \halpha_{k-1} + \cdots + \khalpha{(k-1)}_1 + \khalpha{k}_0 \big]\ \sim\ \alpha_j + \halpha_{j-1} + \cdots + \khalpha{(j-1)}_1 + \khalpha{j}_0
	\end{equation*}
\end{lemma}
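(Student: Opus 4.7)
The plan is to expand the image of the sum using the homomorphism property of $f$, invoke $\F$-coherence termwise, and then use regressiveness together with $u$-idempotence to collapse consecutive repetitions down to a sum of the required shape.

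The homomorphism property of $f$ transfers to each iterated extension $\kh{n}\hspace{-0.15em}f$, so applying $f$ (i.e.\ $\kh{k+1}\hspace{-0.15em}f$) distributes over the sum. $\F$-coherence supplies $m_i \leq i$ with $f(\alpha_i) \sim \alpha_{m_i}$; since $\sim$ is preserved by $\h$ (Proposition \ref{prop:dinasso}(v) and transfer), we get $\kh{t}(f(\alpha_i)) \sim \khalpha{t}_{m_i}$ at every level. Applying Proposition \ref{prop:dinasso}(iv) (transferred to the appropriate levels) to substitute $\sim$-equivalents inside the sum then yields
\begin{equation*}
  f\big[\alpha_k + \halpha_{k-1} + \cdots + \khalpha{k}_0\big] \;\sim\; \alpha_{m_k} + \halpha_{m_{k-1}} + \cdots + \khalpha{k}_{m_0}.
\end{equation*}

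Next I exploit regressiveness. By transfer $\ell(\alpha_i) = i$, and $\sim$ preserves layer, so $\ell(f(\alpha_i)) = m_i$. Definition \ref{defn:regr}(ii) forces $m_0 = 0$; (iii) gives $m_0 \leq m_1 \leq \cdots \leq m_k$; and (iv) gives $m_{i+1} - m_i \in \{0,1\}$. Setting $j := m_k \leq k$, the sequence $(m_i)_{i=0}^{k}$ climbs from $0$ to $j$ in steps of $0$ or $1$, hitting every integer in $[0,j]$ in a sequence of maximal constant blocks.

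Finally I collapse. Reading the reduced sum left-to-right, position $t$ carries $\khalpha{t}_{m_{k-t}}$, and as $t$ grows the subscripts $m_{k-t}$ descend from $j$ to $0$. A block of constant value $v$ occupying positions $[a,b]$ contributes $\khalpha{a}_v + \khalpha{a+1}_v + \cdots + \khalpha{b}_v$; by $u$-idempotence $\alpha_v + \halpha_v \sim \alpha_v$ (iterated and transferred), this telescopes to $\khalpha{a}_v$. Collapsing every block and resubstituting via Proposition \ref{prop:dinasso}(iv) yields $\khalpha{s_j}_j + \khalpha{s_{j-1}}_{j-1} + \cdots + \khalpha{s_0}_0$ for some $0 = s_j < s_{j-1} < \cdots < s_0 \leq k$; Proposition \ref{prop:dinasso}(v) (transferred across levels) gives $\khalpha{s_i}_i \sim \khalpha{(j-i)}_i$ for each $i$, and a final substitution produces the target $\alpha_j + \halpha_{j-1} + \cdots + \khalpha{j}_0$. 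The main hurdle throughout is the bookkeeping across star levels; the substantive content is regressive conditions (iii) and (iv), which forbid jumps in $(m_i)$ and so enable the block-by-block collapse.
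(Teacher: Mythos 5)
Your proof is correct and rests on exactly the same ingredients as the paper's: distribute $f$ over the sum via the homomorphism property, use $\F$-coherence to land each term on some $\alpha_{m_i}$, use Definition \ref{defn:regr}(ii)--(iv) to force the sequence $(m_i)$ to climb from $0$ to $j$ in steps of $0$ or $1$, and use $u$-idempotence to absorb the repeated terms. The only difference is organizational --- the paper runs this as an induction on $k$ with a two-case split ($\ell = j_k$ or $\ell = j_k+1$) at each step, while you unroll the induction into a single global computation with a block-by-block collapse --- so the arguments are essentially the same.
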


\begin{proof}
	By induction on $k$. Given $f \in \F$, the $\F$-coherence of $(\alpha_i)$ implies that $f(\alpha_0) \sim \alpha_0$. Thus, the base case $k=0$ holds with $j_0 = 0$. Now, suppose the lemma holds for $k>0$---we prove the $k+1$ case. By $\F$-coherence, there is $\ell \leq {k+1}$ such that $f(\alpha_{k+1}) \sim \alpha_\ell$, and Definition \ref{defn:regr}.(iii) and \ref{defn:regr}.(iv) imply that either $\ell = j_k$ or $\ell = j_k+1$.
	\begin{description}
		\item[Case 1:] $\ell = j_k$. Then,
		\begin{align*}
			&\ f \Big[ \alpha_{k+1} + \big( \halpha_k + \h\halpha_{k-1} + \cdots + \khalpha{k}_1 + \khalpha{(k+1)}_0 \big) \Big] \\
			\sim\ &\ f(\alpha_{k+1}) + f \big[ \halpha_k + \h\halpha_{k-1} + \cdots + \khalpha{k}_1 + \khalpha{(k+1)}_0 \big] &&\text{$f$ homomorphism} \\
			\sim\ &\ \alpha_j + \halpha_j + \h\halpha_{j-1} + \cdots + \khalpha{j}_1 + \khalpha{(j+1)}_0 \\
			\sim\ &\ \alpha_j + \h\hspace{-0.1em} \big( \halpha_{j-1} + \cdots + \khalpha{(j-1)}_1 + \khalpha{j}_0 \big) &&\text{$u$-idempotence of $\alpha_j$} \\
			\sim\ &\ \alpha_j + \halpha_{j-1} + \cdots + \khalpha{(j-1)}_1 + \khalpha{j}_0 &&\text{Proposition \ref{prop:dinasso}}
		\end{align*}
		
		\item[Case 2:] $\ell = j_k+1$. We can prove similarly that
	\end{description}
	\begin{equation*}
		f \big[ \alpha_{k+1} + \halpha_k + \cdots + \khalpha{k}_1 + \khalpha{(k+1)}_0 \big]\ \sim\ \alpha_{j+1} + \halpha_j + \cdots + \khalpha{j}_1 + \khalpha{(j+1)}_0 \hfill
	\end{equation*}
	In either case, $j_{k+1} \defeq \ell$ is as required.
\end{proof}

\begin{theorem}\label{thm:coh->ram}
	Suppose $S$ has an $\F$-coherent element $(\alpha_i)_{i<\delta} \in \Pi$. Then, $S$ is $\F$-Ramsey.
\end{theorem}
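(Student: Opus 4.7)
My plan is to produce an $\F$-Ramsey sequence in three steps: (i) pass to a suitable closed $u$-subsemigroup of $\Pi_S$ and apply Ellis--Numakura to get a $u$-idempotent $\F$-coherent sequence; (ii) refine this sequence to one built from iterated sums via Lemma \ref{lem:seq-sum}; and (iii) verify that the refined sequence satisfies the Ramsey property.

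For step (i), since the layers $\hS_i$ are pairwise disjoint, $\F$-coherence defines, for each $(\alpha_i)$, a ``type function'' $\phi: \F \times \delta \to \delta$ with $f(\alpha_j) \sim \alpha_{\phi(f, j)}$. Let $T \subseteq \Pi_S$ be the set of $\F$-coherent sequences sharing the type $\phi$ of our given $(\alpha_i)$. Then $T$ is nonempty, and closed in $\Pi_S$ by Corollary \ref{zero} applied to the continuous maps $(\eta_i) \mapsto \hf(\eta_j)$ and $(\eta_i) \mapsto \eta_{\phi(f, j)}$. Further, $T$ is closed under the componentwise $u$-product: for $(\eta_i), (\zeta_i) \in T$,
\begin{equation*}
    f(\eta_j + \h\zeta_j) \;=\; f(\eta_j) + \h\big(f(\zeta_j)\big) \;\sim\; \eta_{\phi(f, j)} + \h\zeta_{\phi(f, j)}
\end{equation*}
by Proposition \ref{prop:dinasso}(iv). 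Thus $T$ is a closed $u$-subsemigroup of the compact $u$-semigroup $\Pi_S$ (Proposition \ref{prop:pi-compact}), and Corollary \ref{cor:u-id-min} yields a $\preccurlyeq$-minimal $u$-idempotent $(\gamma_i) \in T$. Componentwise, each $\gamma_i$ is then a $u$-idempotent in $\hS$.

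For step (ii), let $\beta_k \in \hS_k$ be a $u$-representative of $\gamma_k + \h\gamma_{k-1} + \cdots + \kh{k}\gamma_0$; this exists by saturation, with $\beta_k \in \hS_k$ by transfer of the layering rule. Applying Lemma \ref{lem:seq-sum} to $(\gamma_i)$ then gives, for each $f \in \F$ and $k$, an index $j_k \leq k$ with $f(\beta_k) \sim \beta_{j_k}$, so $(\beta_k) \in \Pi_S$ is $\F$-coherent. The main obstacle is step (iii): verifying $\beta_i + \h\beta_j \sim \beta_j + \h\beta_i \sim \beta_j$ for $i \leq j$. Expanding both sides as iterated-star sums of $\gamma_m$'s, the target identity reduces to one of the form ``the long sum $\sim$ the short sum $\gamma_j + \h\gamma_{j-1} + \cdots + \kh{j}\gamma_0$''. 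I expect this to follow by an induction paralleling the proof of Lemma \ref{lem:seq-sum}, using $u$-idempotence of each $\gamma_m$ to collapse adjacent terms and Proposition \ref{prop:dinasso}(iv, v) to substitute across star-levels. The subtlety is that componentwise $u$-idempotence alone does not yield cross-layer absorption like $\gamma_m + \h\gamma_j \sim \gamma_j$; here the $\preccurlyeq$-minimality of $(\gamma_i)$ within $T$ must be invoked, playing the role that minimal idempotents in the minimal ideal of a compact right-topological semigroup play in forcing extra absorption.
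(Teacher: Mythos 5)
Your steps (i) and (ii) track the paper's opening moves: the $\F$-coherent elements form a nonempty closed $u$-subsemigroup of $\Pi_S$ (the paper works with all of $\Pi_\F$ rather than fixing a type function $\phi$, but your variant is harmless), Ellis--Numakura yields a componentwise $u$-idempotent coherent sequence, and Lemma \ref{lem:seq-sum} shows the iterated sums $\beta_k \sim \gamma_k + \h\gamma_{k-1} + \cdots + \kh{k}\gamma_0$ again form an $\F$-coherent element of $\Pi_S$. The gap is exactly where you flag it, in step (iii), and the patch you propose does not close it. The order $\preccurlyeq$ on $\Pi_S$ is computed coordinatewise, so $\preccurlyeq$-minimality of $(\gamma_i)$ in $T$ only constrains each $\gamma_i$ relative to other $i$-th coordinates; since the componentwise product never mixes coordinates, no choice of idempotent in $T$, minimal or not, can force a cross-layer relation. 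But such relations are unavoidable: already $u$-idempotence of $\beta_1$ (the case $i=j=1$ of the Ramsey property) unwinds to $\gamma_1 + \h\gamma_0 + \h\gamma_1 + \h\h\gamma_0 \sim \gamma_1 + \h\gamma_0$, which needs $\gamma_0$ to absorb $\h\big(\gamma_1 + \h\gamma_0\big)$ on the right. Minimality of $\gamma_0$ only places $\gamma_0 + \h\gamma_1 + \h\h\gamma_0$ in a group with identity $\gamma_0$; it does not make it equal to $\gamma_0$. So a single application of Ellis--Numakura followed by forming iterated sums cannot produce an $\F$-Ramsey element.

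The paper's actual mechanism is an infinite iterated refinement, in which the cross-layer absorption is \emph{imposed as a constraint} rather than extracted from minimality. One builds sequences $\big(\alpha^{(k)}_i\big)_{i<\delta} \in \Pi_\F$ by induction on $k$: at stage $k$, let $Z_k$ be the set of coherent $(\beta_i)$ with $\beta_i \sim \alpha^{(k-1)}_i$ for $i \leq k$ and $\beta_j + \halpha^{(k-1)}_i \sim \beta_j$ for all $i \leq k$ and $j \geq i$. This $Z_k$ is a closed $u$-subsemigroup of $\Pi_\F$ by Corollary \ref{zero}, and it is nonempty precisely because it contains the iterated-sum element $\beta_i \sim \alpha^{(k-1)}_i + \halpha^{(k-1)}_{i-1} + \cdots + \kh{i}\alpha^{(k-1)}_0$, whose coherence is what Lemma \ref{lem:seq-sum} is for; one then picks a fresh $u$-idempotent $\big(\alpha^{(k)}_i\big) \in Z_k$ and continues. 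Thus each stage secures absorption of one more layer while preserving (up to $\sim$) the layers already fixed, and the $\F$-Ramsey element is obtained at the end as the diagonal $\alpha_i \sim \alpha^{(0)}_0 + \halpha^{(1)}_1 + \cdots + \kh{i}\alpha^{(i)}_i$. Your write-up is missing this entire inductive descent through the $Z_k$; replacing it by one appeal to $\preccurlyeq$-minimality is not a valid shortcut.
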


\begin{proof}
	The following argument is based on \cite{lupiniGowersRamseyTheorem2017}. Recall that $\Pi$ is a compact $u$-semigroup by Proposition \ref{prop:pi-compact}. We show that the subset $\Pi_\F \subseteq \Pi$ of $\F$-coherent elements is a closed $u$-subsemigroup.
	
	$\Pi_\F$ closed: apply Lemma \ref{zero}, writing $\Pi_\F$ as
	\begin{equation*}
		\Pi_\F\ =\ \bigcap_{j < \delta}\ \bigcup_{k \leq j}\ \big\{ (\alpha_i) \in \Pi : f(\alpha_j) \sim \alpha_k \big\}
	\end{equation*}
	
	
	$\Pi_\F$ $u$-subsemigroup: take $(\beta_i)_{i=0}^\infty, (\gamma_i)_{i=0}^\infty \in \Pi_\F$. Then, $(\beta_i) + \h (\gamma_i) = (\beta_i + \h \gamma_i)$ is $\F$-coherent, by Proposition \ref{prop:dinasso} and the fact that all $f \in \F$ are homomorphisms. By transfer, there exists $(\delta_i) \in \Pi_\F$ with $(\delta_i) \sim (\beta_i + \h \gamma_i)$, as required.
	
	It follows that $\Pi_\F$ is itself a compact $u$-semigroup. By induction on $k < \delta$, we will construct a sequence of elements $\big( \alpha^{(k)}_i \big)_{i<\delta} \in \Pi_\F$ with the following properties:
	\begin{enumerate}[label=(\roman*),itemsep=0mm]
		\item For all $i \leq k$, $\alpha^{(k)}_i \sim \alpha^{(k-1)}_i$;
		\item For all $i \in \N$, $\alpha^{(k)}_i$ is $u$-idempotent;
		\item For all $i \leq k$ and $j \geq i$, we have $\alpha^{(k)}_j + \halpha^{(k)}_i\ \sim\ \alpha^{(k)}_j$.
	\end{enumerate}\vspace*{2mm}
	
	To begin, take any $u$-idempotent $\big( \alpha'_i \big)_{i<\delta} \in \Pi_\F$ by Lemma \ref{lem:u-id}. Let
	\begin{equation*}
		Z_0 = \left\{ \big( \beta_i \big)_{i<\delta} \in \Pi_\F :\ \beta_0 \sim \alpha'_0,\ \forall j\ \beta_j + \halpha'_0\ \sim\, \beta_j \right\}
	\end{equation*}
	Then, $Z_0$ is a compact $u$-semigroup by Lemma \ref{zero}. Furthermore, it is nonempty, since it contains the element $\big( \beta_i \big)_{i<\delta}$ defined by
	\begin{equation*}
		\beta_i\ \sim\ \alpha'_i + \halpha'_0
	\end{equation*}
	
	To see this is in $\Pi_\F$, observe that for any $f \in \F$, $f \big[ \alpha'_0 \big] \sim \alpha'_0$, since $f$ is regressive and $\big( \alpha'_i \big)$ is $\F$-coherent. Then we have
	\begin{equation*}
		f(\beta_i)\ \sim\ f \big[ \alpha'_i + \halpha'_0 \big]\ \sim\ f \big[ \alpha'_i \big] + \hf \big[ \alpha'_0 \big]\ \sim\ \alpha'_j + \halpha'_0\ \sim\ \beta_j
	\end{equation*}
	for some $j \leq i$. It follows from the $u$-idempotence of $\alpha'_0$ that $\big( \beta_i \big)_{i<\delta} \in Z_0$. Picking some $u$-idempotent $\big( \alpha^{(0)}_i \big)_{i<\delta} \in Z_0$, we can verify that $\big( \alpha^{(0)}_i \big)$ satisfies properties (i)--(iii) above with $k=0$.
	
	Proceeding inductively, suppose sequences $\big( \alpha^{(0)}_i \big), \big( \alpha^{(1)}_i \big), \ldots, \big( \alpha^{(k-1)}_i \big)$ have been defined as required. Let
	\begin{equation*}
		Z_k = \left\{ \big( \beta_i \big)_{i<\delta} \in \Pi_\F :\ \forall i \leq k\ \Big( \beta_i \sim \alpha^{(k-1)}_i\ \wedge\ \forall j \geq i,\ \beta_j + \halpha^{(k-1)}_i\ \sim\, \beta_j \Big) \right\}
	\end{equation*}
	
	Then, $Z_k$ is a compact $u$-semigroup by Lemma \ref{zero}. Furthermore, it is nonempty, since it contains the element $\big( \beta_i \big)_{i<\delta}$ defined by
	\begin{equation*}
		\beta_i\ \sim\ \alpha^{(k-1)}_i\ +\ \halpha^{(k-1)}_{i-1}\ +\ \h\halpha^{(k-1)}_{i-2}\ +\ \cdots\ +\ \khalpha{i}^{(k-1)}_0
	\end{equation*}
	which is $\F$-coherent by Lemma \ref{lem:seq-sum}. Since $\big( \alpha^{(k-1)}_i \big)$ satisfies properties (i)--(iii), we have that $\big( \beta_i \big)_{i<\delta} \in Z_k$. Picking some $u$-idempotent $\big( \alpha^{(k)}_i \big)_{i<\delta} \in Z_k$, we can verify that $\big( \alpha^{(k)}_i \big)$ satisfies properties (i)--(iii) above.
	
	Finally, taking
	\begin{equation*}
		\alpha_i\ \sim\ \alpha^{0}_0\ +\ \halpha^{1}_1\ +\ \h\halpha^{2}_2\ +\ \cdots\ +\ \khalpha{i}^{i}_i
	\end{equation*}
	for each $i \in \N$, we get an $\F$-Ramsey sequence $\big( \alpha_i \big)_{i<\delta} \in \Pi_\F$.
\end{proof}

\begin{remark}
	In \cite[Thm 3.8]{farahPartitionTheoremsLayered2002}, assuming rather strong conditions on $S$ and $\F$, a direct construction of an $\F$-coherent is given. Along with Theorems \ref{thm:ram1} and \ref{thm:coh->ram}, this gives sufficient conditions for a Ramsey statement on $S$ and $\F$ to be true. Unfortunately, such conditions do not apply to many examples of layered semigroups which we consider.
\end{remark}

\section{Applications}
\label{sec:appl}

\subsection{Gowers' theorem}
\label{sec:gow}

Let $\FIN$ be the Gowers semigroup of Example \ref{exm:gow-sg}, and $\F$ be the collection of generalised tetris operations on $\FIN$, as in Example \ref{exm:gow-maps}. For any $\varphi \in \g\FIN_1$, the sequence $(\alpha_j)$ defined $\alpha_j(n) = j \cdot \varphi(n)$ is $\F$-coherent. Thus, by Theorem \ref{thm:coh->ram}, $\FIN$ is $\F$-Ramsey. Applying Theorem \ref{thm:ram1} gives us the generalised Gowers' theorem of Lupini \cite[Thm 1.1]{lupiniGowersRamseyTheorem2017}:

\begin{corollary}[Lupini]\label{cor:gow}
	For any finite colouring of $\FIN$, there exists a block sequence $(f_i)_{i=1}^\infty \subseteq \FIN_n$ such that for every $k \leq n$, the following set is monochromatic:
	\begin{equation*}
		\FIN_k\ \cap\ \big\{\ \tilde{F}_1(f_{n_1}) + \cdots + \tilde{F}_\ell(f_{n_\ell})\ :\ n_1 < \cdots < n_\ell,\ \tilde{F}_1, \ldots, \tilde{F}_\ell \in \F\ \big\}
	\end{equation*}
\end{corollary}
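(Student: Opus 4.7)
The plan is to verify the hypothesis of Theorem \ref{thm:coh->ram} for the candidate sequence $(\alpha_j)$ suggested by the author, so that $\FIN$ becomes $\F$-Ramsey and Theorem \ref{thm:ram1} delivers the corollary directly. The whole argument is modular: the general framework handles the passage from a single $\F$-coherent sequence to a monochromatic block sequence, and the application-specific work reduces to a one-line calculation about the action of tetris operations on $(\alpha_j)$.

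First I would note that $\g\FIN_1$ is nonempty. The layer $\FIN_1$ is adequate: given any finite $G \subseteq \FIN_1$, pick any $\{0,1\}$-valued function of finite support avoiding the union of the supports of elements of $G$. So the proposition from \S\ref{sec:pi} guarantees $\g\FIN_1 \neq \emptyset$. Fix $\varphi \in \g\FIN_1$; by transfer, $\varphi : \h\N \to \{0,1\}$ has hyperfinite support disjoint from $\N$ with $\max(\range(\varphi)) = 1$. Setting $\alpha_j \defeq j\varphi$, the function $\alpha_j$ takes values in $\{0,j\}$, shares the (hyperfinite, disjoint-from-$\N$) support of $\varphi$, and attains maximum $j$, so $\alpha_j \in \g\FIN_j$. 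Hence $(\alpha_j) \in \Pi_{\FIN}$.

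Next I would verify $\F$-coherence. Any tetris operation $\tilde F \in \F$ is induced by a nondecreasing surjection $F : \N \to \N$, which must satisfy $F(0) = 0$ and $F(j) \leq j$ for every $j$ (by surjectivity the $j+1$ nondecreasing values $F(0), \ldots, F(j)$ must cover $\{0, \ldots, F(j)\}$, giving $F(j) + 1 \leq j + 1$). By transfer, $\tilde F(\alpha_j)(n) = F(j \varphi(n))$, which is $0$ when $\varphi(n) = 0$ and $F(j)$ when $\varphi(n) = 1$. Thus $\tilde F(\alpha_j) = F(j)\varphi = \alpha_{F(j)}$ exactly, with $F(j) \leq j$. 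This is precisely the condition of Definition \ref{defn:coh}. Theorem \ref{thm:coh->ram} then makes $\FIN$ $\F$-Ramsey, and Theorem \ref{thm:ram1} applied to the given finite colouring and fixed $n$ yields the desired block sequence $(f_i)_{i=1}^\infty \subseteq \FIN_n$.

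There is no real obstacle here: the substantive combinatorics has been fully absorbed into the framework, and what remains is essentially mechanical. The only point requiring minor care is that a tetris operation $\tilde F$ still acts on a nonstandard $\alpha \in \h\FIN_j$ as $F \circ \alpha$ pointwise, but this is immediate from transfer applied to the defining identity $\tilde F(f)(n) = F(f(n))$.
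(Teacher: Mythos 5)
Your proposal is correct and is essentially the paper's own argument: fix $\varphi \in \g\FIN_1$, set $\alpha_j = j\varphi$, check $\F$-coherence via $\tilde F(\alpha_j) = \alpha_{F(j)}$ with $F(j) \leq j$, and then invoke Theorem \ref{thm:coh->ram} followed by Theorem \ref{thm:ram1}. The only difference is that you spell out the coherence verification and the nonemptiness of $\g\FIN_1$, which the paper asserts without proof.
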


The finite version (due to Barto\v sov\'a and Kwiatkowska \cite[Cor 2.7]{bartosovaGowersRamseyTheorem2017}) is obtained from Theorem \ref{thm:ram-fin-seq}:

\begin{corollary}[Barto\v sov\'a--Kwiatkowska]
	For any $m \in \N$ and finite colouring of $\FIN$, there exists a block sequence $(f_1,\ldots,f_m) \subseteq \FIN_n$ of length $m$ such that for every $k \leq n$, the following set is monochromatic:
	\begin{equation*}
		\FIN_k\ \cap\ \big\{\ \tilde{F}_1(f_{n_1}) + \cdots + \tilde{F}_\ell(f_{n_\ell})\ :\ n_1 < \cdots < n_\ell \leq m,\ \tilde{F}_1, \ldots, \tilde{F}_\ell \in \F\ \big\}
	\end{equation*}
\end{corollary}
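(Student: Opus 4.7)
The plan is to apply Theorem \ref{thm:ram-fin-seq} directly to the Gowers semigroup $\FIN$ together with the locally finite collection $\F$ of generalised tetris operations from Example \ref{exm:gow-maps}. This requires only that $\FIN$ be $\F$-Ramsey, and that fact was already established in the paragraph preceding Corollary \ref{cor:gow}: picking any $\varphi \in \g\FIN_1$ (nonempty by adequacy of $\FIN_1$), the sequence $(\alpha_j)_{j \in \N}$ defined by $\alpha_j(n) = j \cdot \varphi(n)$ lies in $\Pi_\FIN$ and is $\F$-coherent, since for every nondecreasing surjection $F: \N \to \N$ one has $\tilde F(\alpha_j) = F \circ \alpha_j = \alpha_{F(j)}$, with $F(j) \leq j$ forced by $F$ being a nondecreasing surjection. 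Then Theorem \ref{thm:coh->ram} upgrades this $\F$-coherent sequence to an $\F$-Ramsey one, so $\FIN$ is $\F$-Ramsey.

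With $\F$-Ramseyness of $\FIN$ secured, the corollary is a direct specialisation of Theorem \ref{thm:ram-fin-seq}: feed in the given $m \in \N$, the prescribed layer index $n$, the locally finite family $\F$, and the given finite colouring of $\FIN$, and the theorem outputs a block sequence $(f_1, \ldots, f_m) \subseteq \FIN_n$ of length $m$ satisfying the monochromaticity condition stated, layer by layer for each $k \leq n$. Here \textit{block sequence} unpacks to the condition that the supports $\supp(f_i)$ are pairwise disjoint, which is exactly the requirement that the partial sums involved in the displayed set are defined in $\FIN$.

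There is essentially no new obstacle: all the work has been absorbed into the general framework. The only substantive step --- producing a $u$-idempotent, $\preccurlyeq$-minimal $\F$-Ramsey sequence inside $\Pi_\FIN$ via the Ellis--Numakura lemma and Lemma \ref{lem:seq-sum} --- occurred inside the proof of Theorem \ref{thm:coh->ram}, while the reduction from $\F$-Ramsey to a finite colouring statement is exactly what Theorem \ref{thm:ram-fin-seq} packages. Thus the proof reduces to citing the explicit $\F$-coherent $(\alpha_j)$ and invoking the two main theorems.
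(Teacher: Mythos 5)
Your proposal is correct and follows exactly the paper's route: the $\F$-coherent sequence $\alpha_j(n) = j\cdot\varphi(n)$ for $\varphi \in \g\FIN_1$ gives $\F$-Ramseyness of $\FIN$ via Theorem \ref{thm:coh->ram}, and the corollary is then a direct instance of Theorem \ref{thm:ram-fin-seq}. (The paper itself states this in one line; your only cosmetic slip is attributing $\preccurlyeq$-minimality to the construction in Theorem \ref{thm:coh->ram}, which is not needed or claimed there.)
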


Gowers' original theorem \cite[Theorem 1]{gowersLipschitzFunctionsClassical1992} is obtained by taking the subset $\F' \subseteq \F$ consisting of iterates of the \textit{tetris operation} $T(n) = \max\{ n-1, 0 \}$. Since any $\F$-coherent sequence is also $\F'$-coherent, it follows that $\FIN$ is $\F'$-Ramsey, so Theorem \ref{thm:ram1} gives:

\begin{corollary}[Gowers]
	For any finite colouring of $\FIN$, there exists a block sequence $(f_i)_{i=1}^\infty \subseteq \FIN_n$ such that for every $k \leq n$, the following set is monochromatic:
	\begin{equation*}
		\FIN_k\ \cap\ \big\{\ \tilde{T}^{(m_1)}(f_{n_1}) + \cdots + \tilde{T}^{(m_\ell)}(f_{n_\ell})\ :\ n_1 < \cdots < n_\ell;\ m_1, \ldots, m_\ell \in \N\ \big\}
	\end{equation*}
	where $\tilde{T}^{(m)}$ denotes the $m$th iterate of the tetris operation.
\end{corollary}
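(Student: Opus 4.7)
The proof is essentially a direct application of the machinery already set up, with the observation that restricting the class of regressive maps can only make coherence easier to achieve. The plan is as follows.

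First, I would verify that $\F' = \{ \tilde{T}^{(m)} : m \in \N \}$ is a locally finite collection of regressive maps on $\FIN$. Since $T(n) = \max\{n-1,0\}$ is a nondecreasing surjection $\N \to \N$, each iterate $T^{(m)}$ is too, so Example \ref{exm:gow-maps} gives that each $\tilde{T}^{(m)}$ is regressive. Local finiteness of $\F'$ follows either directly or from the containment $\F' \subseteq \F$, since $\F'_i \subseteq \F_i$ is finite.

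Next, I would recall the $\F$-coherent sequence $(\alpha_j)_{j \in \N}$ constructed in the previous subsection from any $\varphi \in \g\FIN_1$ via $\alpha_j(n) = j \cdot \varphi(n)$. Because $\F' \subseteq \F$, this same sequence is automatically $\F'$-coherent: the defining condition $f(\alpha_j) \sim \alpha_k$ for some $k \leq j$ needs to be checked for fewer maps. Applying Theorem \ref{thm:coh->ram} to $(\FIN, \F')$ therefore yields that $\FIN$ is $\F'$-Ramsey.

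Finally, I would invoke Theorem \ref{thm:ram1} applied to the pair $(\FIN, \F')$. Spelling out the general monochromatic set for this particular $\F'$, every element of
\begin{equation*}
    \F'\text{-span}\,(f_{n_1}, \ldots, f_{n_\ell})\ =\ \big\{\ \tilde{T}^{(m_1)}(f_{n_1}) + \cdots + \tilde{T}^{(m_\ell)}(f_{n_\ell})\ :\ m_1,\ldots,m_\ell \in \N\ \big\}
\end{equation*}
is obtained by choosing $f_i \in \F'$ to be some iterate of the tetris operation, which is exactly the form appearing in the conclusion of the corollary. There is no real obstacle here: the only thing to double-check is that block sequences with respect to $\F'$ (needed to make the sums defined in the partial semigroup $\FIN$) are no harder to find than block sequences with respect to $\F$, which again follows from $\F' \subseteq \F$ since iterates of $T$ preserve support and cannot introduce overlaps beyond what the larger class $\F$ already avoids.
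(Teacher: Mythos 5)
Your proposal is correct and follows the same route as the paper: restrict to $\F' \subseteq \F$ consisting of iterates of the tetris operation, observe that the $\F$-coherent sequence $\alpha_j(n) = j\cdot\varphi(n)$ is automatically $\F'$-coherent, and then apply Theorem \ref{thm:coh->ram} followed by Theorem \ref{thm:ram1}. The only blemish is notational: you reuse $f_i$ both for the block sequence and for elements of $\F'$ near the end, but this does not affect the argument.
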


By identifying a set $A$ with its characteristic function $\mathbf{1}_A$, $\FIN_1$ is identified with $\Pow_\text{fin}(\N) = \{ A \subseteq \N: A \text{ finite} \}$. Then, the case $k=n=1$ gives Hindman's finite unions theorem \cite[Cor 3.3]{hindmanFiniteSumsSequences1974}:

\begin{corollary}[Hindman]\label{cor:hindman-fu}
	For any finite colouring of $\Pow_\text{fin}(\N)$, there exists a disjoint sequence $(A_i)_{i=1}^\infty$ of finite subsets of $\N$ such that the set $\{ A_{n_1} \cup \cdots \cup A_{n_\ell}:\  n_1 < \cdots < n_\ell \}$ of all finite unions of elements of $(A_i)_{i=1}^\infty$ is monochromatic.
\end{corollary}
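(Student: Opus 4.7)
The plan is to deduce this directly from the previous corollary (Gowers' theorem for the tetris operations) by specialising to $n = k = 1$ and translating through the bijection $A \leftrightarrow \mathbf{1}_A$ between $\Pow_\text{fin}(\N)$ and $\FIN_1$. First, a given finite colouring $c$ of $\Pow_\text{fin}(\N)$ transports to a finite colouring of $\FIN_1$ via $c'(f) = c(\supp(f))$, which I would extend arbitrarily to a finite colouring of all of $\FIN$ (say, by assigning one extra colour to $\FIN \setminus \FIN_1$). Applying the preceding Gowers corollary with $n = 1$ yields a block sequence $(f_i)_{i=1}^\infty \subseteq \FIN_1$ such that
\begin{equation*}
\FIN_1\ \cap\ \big\{\ \tilde{T}^{(m_1)}(f_{n_1}) + \cdots + \tilde{T}^{(m_\ell)}(f_{n_\ell})\ :\ n_1 < \cdots < n_\ell;\ m_1,\ldots,m_\ell \in \N\ \big\}
\end{equation*}
is monochromatic under $c'$.

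The next step, which carries the only real content, is to identify this Gowers-set with the Hindman-set of finite unions. Since every $f \in \FIN_1$ takes values only in $\{0,1\}$, and $T$ maps both $0$ and $1$ to $0$, we have $\tilde{T}^{(0)}(f) = f$ while $\tilde{T}^{(m)}(f) \equiv 0$ for every $m \geq 1$. Hence each tetris-sum above (which is defined because $(f_i)$ is a block sequence, so the $f_{n_j}$ have pairwise disjoint supports) collapses to $\sum_{j:\, m_j = 0} f_{n_j}$, and lies in $\FIN_1$ exactly when at least one $m_j$ is $0$. The monochromatic set therefore equals $\{ f_{p_1} + \cdots + f_{p_r} : p_1 < \cdots < p_r \}$, the set of all finite sums of elements of $(f_i)$.

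Finally, setting $A_i = \supp(f_i)$ produces a sequence of finite subsets of $\N$ which are pairwise disjoint (this is precisely the block-sequence condition in $\FIN_1$), and $f_{p_1} + \cdots + f_{p_r} = \mathbf{1}_{A_{p_1} \cup \cdots \cup A_{p_r}}$ because the supports are disjoint. Transporting monochromaticity back through $c'(f) = c(\supp(f))$ shows that $\{ A_{p_1} \cup \cdots \cup A_{p_r} : p_1 < \cdots < p_r \}$ is $c$-monochromatic, as required. The only point needing genuine verification is the collapse identity $\tilde{T}^{(m)}\!\restriction_{\FIN_1} \equiv 0$ for $m \geq 1$, which is immediate from the definition of $T$; there is no real obstacle, since all the Ramsey-theoretic work has already been done in establishing that $\FIN$ is $\F$-Ramsey.
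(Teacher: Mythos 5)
Your proposal is correct and follows essentially the same route as the paper, which obtains the result by identifying $\FIN_1$ with $\Pow_\text{fin}(\N)$ via $A \leftrightarrow \mathbf{1}_A$ and taking $k=n=1$ in Gowers' theorem. You have simply spelled out the details the paper leaves implicit (the collapse $\tilde{T}^{(m)}\restriction_{\FIN_1}\equiv 0$ for $m\geq 1$, the transport of the colouring, and the disjointness of supports coming from the block-sequence condition), all of which check out.
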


Gowers' theorem admits many variants. For $f, g \in \FIN$, say $f < g$ if $\max(\supp(f)) < \min(\supp(g))$. Then, we can further restrict the operation of pointwise sum $f+g$ to only be defined when $f < g$. The resulting semigroup is still adequate, and the $\g \FIN_k$ are identical to before. Therefore, exactly the same argument proves the \textit{strong Gowers' theorem}:

\begin{corollary}[Strong Gowers']\label{cor:str-gow}
	For any finite colouring of $\FIN$, there exists an increasing sequence $(f_1 < f_2 < \cdots) \subseteq \FIN_n$ such that for every $k \leq n$, the following set is monochromatic:
	\begin{equation*}
		\FIN_k\ \cap\ \big\{\ \tilde{F}_1(f_{n_1}) + \cdots + \tilde{F}_\ell(f_{n_\ell})\ :\ n_1 < \cdots < n_\ell,\ \tilde{F}_1, \ldots, \tilde{F}_\ell \in \F\ \big\}
	\end{equation*}
\end{corollary}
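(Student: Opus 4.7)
The plan is to follow the author's hint: verify that restricting the Gowers semigroup so that $f+g$ is defined only when $f < g$ still yields an adequate partial layered semigroup with the same sets $\g\FIN_k$, and then quote the argument used for Corollary \ref{cor:gow} verbatim.

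First I would check that the restricted partial operation still defines a layered semigroup. Associativity and the layering identity $\ell(f+g) = \max\{\ell(f),\ell(g)\}$ are inherited from $\FIN$ since the new operation is the restriction of the old one. For adequacy of each $\FIN_k$, given $f_1, \ldots, f_n \in \FIN_k$ take any $g \in \FIN_k$ whose support begins strictly above $\max_i \max(\supp(f_i))$; then $f_i + g$ is defined and lies in $\FIN_k$ for every $i$.

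Next I would verify that $\g\FIN_k$, computed with respect to the restricted operation, still coincides with the cofinite elements of $\h\FIN_k$ as in the original semigroup. If $\alpha \in \h\FIN_k$ is cofinite, then $\min(\supp(\alpha)) \in \h\N \setminus \N$ exceeds every standard integer, so $x < \alpha$ and $x + \alpha$ is defined for every $x \in \FIN_k$. Conversely, if $\alpha$ is not cofinite, some $n \in \supp(\alpha) \cap \N$ witnesses the failure via $x = k \cdot \mathbf{1}_{\{n\}} \in \FIN_k$, for which $x + \alpha$ is undefined in the restricted semigroup. This coincidence is really the only subtle point; the rest is bookkeeping.

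Since $\g\FIN_k$ is unchanged and the tetris operations in $\F$ do not alter supports (each $\tilde{F} = F \circ (\cdot)$ with $F$ a nondecreasing surjection $\N \to \N$ sends $0$ to $0$), they remain regressive maps in the restricted setting, and the sequence $\alpha_j(n) = j \cdot \varphi(n)$ for any $\varphi \in \g\FIN_1$ is still $\F$-coherent in $\Pi_S$. By Theorem \ref{thm:coh->ram} the restricted $\FIN$ is $\F$-Ramsey, so Theorem \ref{thm:ram1} yields a block sequence $(f_i)_{i=1}^\infty \subseteq \FIN_n$ with the required monochromatic sets. The payoff is that a block sequence in the \emph{restricted} semigroup, by definition, has all sums $\tilde{F}_1(f_{n_1}) + \cdots + \tilde{F}_\ell(f_{n_\ell})$ defined; since each $\tilde{F}_j$ preserves supports, this forces $\max(\supp(f_{n_j})) < \min(\supp(f_{n_{j+1}}))$, i.e.\ $f_1 < f_2 < \cdots$, delivering Corollary \ref{cor:str-gow}.
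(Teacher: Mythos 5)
Your proposal is correct and takes essentially the same route as the paper: the paper's own (very terse) justification is precisely that the sum restricted to $f < g$ leaves $\FIN$ adequate and leaves the $\g\FIN_k$ unchanged, so the argument for Corollary \ref{cor:gow} goes through verbatim, and you have simply filled in the verifications it leaves implicit. One small imprecision: generalised tetris operations do not ``preserve supports'' --- they can shrink them (e.g.\ $\tilde{T}$ kills the positions where $f(n)=1$) --- but $\supp(\tilde{F}(f)) \subseteq \supp(f)$ is all you need for $\F$ to remain a family of regressive maps on the restricted semigroup, and the increasing property of the resulting block sequence follows most cleanly by specialising the block-sequence condition to $\tilde{F}_j = \id \in \F$ for all $j$.
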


Again, $k=n=1$ gives a strengthening of Corollary \ref{cor:hindman-fu}---we can find an increasing sequence $(A_1 < A_2 < \cdots)$ with the same property, where for $A, B \subseteq \N$, $A < B \iff \max(A) < \min(B)$.

\subsection{The Graham--Rothschild theorem}
\label{sec:gr}

Let $W$ be the Graham--Rothschild semigroup of Example \ref{exm:gr-sg}, and $\F$ be the collection of substitution maps $\tilde{w}$ on $W,$ as in Example \ref{exm:gr-maps}.

\begin{theorem}[\cite{carlsonInfinitaryExtensionGraham2006,hindmanCombiningExtensionsHalesJewett2019}]\label{thm:gr-f-coh}
	There is an $\F$-coherent $(\alpha_i) \in \Pi_W$.
\end{theorem}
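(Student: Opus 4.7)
The goal is to construct, for each $j \in \N$, a hyperfinite $j$-parameter word $\alpha_j \in \h W_j$ such that for every substitution map $\tilde{w} \in \F$, $\tilde{w}(\alpha_j) \sim \alpha_{k(\tilde{w},j)}$, where $k(\tilde{w},j) \leq j$ is the number of distinct variables occurring among the first $j$ positions of $\tilde{w}$. Since variables in an infinite parameter word must first appear in increasing order, there are only finitely many possible restrictions $\tilde{w}\vert_{[1,j]}$, which is consistent with local finiteness of $\F$.

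I would build the $\alpha_j$ recursively, selecting each as a $u$-idempotent in a suitable closed $u$-subsemigroup of $\h W_j$. Having fixed $u$-idempotent $\alpha_0, \ldots, \alpha_{j-1}$, define
\begin{equation*}
    T_j \defeq \big\{\, \alpha \in \h W_j : f(\alpha) \sim \alpha_{k(f,j)} \text{ for all } f \in \F_j \,\big\}
\end{equation*}
where $k(f,j)$ is the predicted layer of $f(\alpha)$. By Corollary \ref{zero} and continuity of substitution maps, $T_j$ is closed; it is a $u$-subsemigroup because if $\alpha, \beta \in T_j$ then $f(\alpha + \h\beta) \sim f(\alpha) + \h f(\beta) \sim \alpha_{k(f,j)} + \h\alpha_{k(f,j)} \sim \alpha_{k(f,j)}$, using $u$-idempotence of $\alpha_{k(f,j)}$. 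Once nonemptiness of $T_j$ is established, the Ellis--Numakura lemma (Lemma \ref{lem:u-id}) produces a $u$-idempotent $\alpha_j \in T_j$. The base case $j = 0$ is trivial: any $u$-idempotent in $\h W_0$ works, since substitution maps act as the identity on $W_0$.

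The main obstacle is showing $T_j$ is nonempty for $j \geq 1$. This is the combinatorial heart of the theorem: one must exhibit a single hyperfinite $j$-parameter word whose specialization under every $f \in \F_j$ realizes the prescribed $u$-equivalence class of $\alpha_{k(f,j)}$. This is a nontrivial ``absorption'' statement, and it is where the genuine Ramsey-theoretic content enters. Following Carlson \cite{carlsonInfinitaryExtensionGraham2006}, one can derive it from the infinitary Graham--Rothschild theorem, producing infinite parameter words that absorb all finite substitutions and taking $\alpha_j$ to be a hyperfinite initial segment. Alternatively, following Hindman--Strauss \cite{hindmanCombiningExtensionsHalesJewett2019}, one constructs $\alpha_j$ directly by iteratively concatenating many permuted copies of previously built words interspersed with controlled occurrences of $x_j$, then verifies via transfer that each specialization lands in the correct $u$-class. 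Coordinating these absorption conditions simultaneously over all $f \in \F_j$ and all previously fixed $\alpha_k$ is the main technical hurdle, and is precisely what the cited results accomplish.
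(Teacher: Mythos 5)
First, note that the paper does not actually prove this statement: Theorem \ref{thm:gr-f-coh} is imported wholesale from Carlson--Hindman--Strauss and Hindman--Strauss--Zamboni, who directly construct an $\F$-Ramsey sequence of $\preccurlyeq$-minimal $u$-idempotents (which is in particular $\F$-coherent); Remark \ref{rmk:gr-ram} and the closing problems explicitly say that no simpler construction is known. So deferring the combinatorial core to those references is, by itself, the same move the paper makes---but the soundest version of that move is simply ``take the sequence they build'', full stop.

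The trouble is that your recursive wrapper introduces a genuine gap rather than organising the proof. Nonemptiness of $T_j$ depends on the particular $\alpha_0,\ldots,\alpha_{j-1}$ already fixed, and you give no mechanism guaranteeing that earlier choices remain extendable. Already the base case fails: it is not true that ``any $u$-idempotent in $\h W_0$ works''. Take $A=\{0,1\}$ and a $u$-idempotent $\alpha_0$ with $\{0^n : n\geq 1\}\in\U_{\alpha_0}$ (such exist, since the all-zero words form a subsemigroup, so $\h\{0^n : n\geq 1\}$ is a closed $u$-subsemigroup and Lemma \ref{lem:u-id} applies). Every $\alpha\in\h W_1$ contains an occurrence of $x_1$ by transfer, so $\alpha[1]$ contains an occurrence of the letter $1$ and cannot be $u$-equivalent to $\alpha_0$; hence your $T_1$ is empty. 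This is precisely why the paper's Lemma \ref{lem:hj-ns} insists on a $\preccurlyeq$-minimal $\nu$. The cited constructions coordinate minimality (and stronger absorption conditions) across all layers simultaneously---that global coordination is the ``extraordinarily involved'' content---and it cannot be recovered by freely picking an idempotent at each level and hoping the next $T_j$ is nonempty; nor do the references prove nonemptiness of your $T_j$ relative to an arbitrary prior choice of $\alpha_0,\ldots,\alpha_{j-1}$. Finally, deriving nonemptiness of $T_j$ ``from the infinitary Graham--Rothschild theorem'' would be circular in the context of this paper, since the sole purpose of Theorem \ref{thm:gr-f-coh} here is to derive that theorem.
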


As we have seen, Theorem \ref{thm:gr-f-coh} is enough to deduce a Ramsey result about $W.$ In fact, Theorems \ref{thm:ram1} and \ref{thm:ram2} give the main results from \cite{carlsonInfinitaryExtensionGraham2006}, infinitary versions of the Graham--Rothschild theorem:

\begin{corollary}[Carlson--Hindman--Strauss, Theorem 1.4]
	For any finite colouring of $W,$ there exists a sequence $(u_i)_{i=0}^\infty \in \prod_{i=0}^\infty W_i$ such that for every $k \in \N$, the following set is monochromatic:
	\begin{equation*}
		W_k\ \cap\ \big\{\ u_{n_1}[\tilde{w}_1] \ct \cdots \ct u_{n_\ell}[\tilde{w}_\ell]\ :\ n_1 < \cdots < n_\ell,\ \tilde{w}_1, \ldots, \tilde{w}_\ell \in \F\ \big\}
	\end{equation*}
\end{corollary}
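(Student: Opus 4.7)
The plan is to chain together the three pillars of the framework already assembled. The deep input is Theorem \ref{thm:gr-f-coh}, which is imported as a black box from \cite{carlsonInfinitaryExtensionGraham2006,hindmanCombiningExtensionsHalesJewett2019}: it furnishes an $\F$-coherent element $(\alpha_i) \in \Pi_W$. I would feed this directly into Theorem \ref{thm:coh->ram} to conclude that $W$ is $\F$-Ramsey, i.e.\ $\Pi_W$ contains an $\F$-Ramsey sequence.

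With $W$ shown to be $\F$-Ramsey, I would invoke Theorem \ref{thm:ram2} on an arbitrary finite colouring $c$ of $W$. Note that the Graham--Rothschild semigroup has $\delta = \omega$ many layers, since parameter words may contain arbitrarily many variables, so the index set in Theorem \ref{thm:ram2} is exactly $\N$. The theorem then produces a block sequence $(u_i)_{i=0}^\infty \in \prod_{i=0}^\infty W_i$ for which each intersection
\[ W_k \cap \big\{ f_1(u_{n_1}) + \cdots + f_\ell(u_{n_\ell}) : n_1 < \cdots < n_\ell,\ f_1, \ldots, f_\ell \in \F \big\} \]
is $c$-monochromatic. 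Translating notation --- the semigroup operation $+$ on $W$ is concatenation $\ct$, and the elements of $\F$ are precisely the substitution maps $u \mapsto u[\tilde w]$ of Example \ref{exm:gr-maps} --- gives the statement of the corollary verbatim.

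The only point to verify is essentially bookkeeping: $W$ is a \emph{total} semigroup under concatenation, so the block-sequence hypothesis in Theorem \ref{thm:ram2} is automatic and no adequacy argument is needed. All the substantive content is packed into Theorem \ref{thm:gr-f-coh}; the role of this corollary is precisely to illustrate that, once a coherent sequence is supplied from outside, the layered-semigroup machinery mechanically outputs the Carlson--Hindman--Strauss infinitary Graham--Rothschild theorem with no further combinatorial work. The main conceptual obstacle therefore lies upstream, in constructing the $\F$-coherent $(\alpha_i)$; here it is discharged by citation.
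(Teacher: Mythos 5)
Your proposal is correct and matches the paper's own derivation exactly: the paper also obtains this corollary by feeding the $\F$-coherent from Theorem \ref{thm:gr-f-coh} into Theorem \ref{thm:coh->ram} and then applying Theorem \ref{thm:ram2} (with $\delta=\omega$, $+$ read as concatenation, and $\F$ the substitution maps). Your observations that $W$ is total (so the block-sequence condition is vacuous) and that all the combinatorial content lives in the cited construction of the coherent sequence are both consistent with the paper's presentation.
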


\begin{corollary}[Carlson--Hindman--Strauss, Corollary 1.7]
	For any finite colouring of $W,$ there exists a sequence $(u_i)_{i=0}^\infty \subseteq W_n$ such that for every $k \leq n$, the following set is monochromatic:
	\begin{equation*}
		W_k\ \cap\ \big\{\ u_{n_1}[\tilde{w}_1] \ct \cdots \ct u_{n_\ell}[\tilde{w}_\ell]\ :\ n_1 < \cdots < n_\ell,\ \tilde{w}_1, \ldots, \tilde{w}_\ell \in \F\ \big\}
	\end{equation*}
\end{corollary}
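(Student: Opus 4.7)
The plan is a direct application of the paper's general framework, with essentially no new work required. By Theorem \ref{thm:gr-f-coh}, there exists an $\F$-coherent element $(\alpha_i) \in \Pi_W$; combined with Theorem \ref{thm:coh->ram}, this gives that $W$ is $\F$-Ramsey. Since $W$ has $\omega$ layers (Example \ref{exm:gr-sg}) and $\F$ is locally finite (Example \ref{exm:gr-maps}), every $n \in \N$ satisfies $n < \delta = \omega$, so all hypotheses of Theorem \ref{thm:ram1} are met.

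Given a finite colouring of $W$ and a fixed $n \in \N$, I invoke Theorem \ref{thm:ram1} to obtain a block sequence $(x_i)_{i=1}^\infty \subseteq W_n$ such that, for each $k \leq n$,
\[
W_k\ \cap\ \big\{\ f_1(x_{n_1}) \ct \cdots \ct f_\ell(x_{n_\ell})\ :\ n_1 < \cdots < n_\ell,\ f_1, \ldots, f_\ell \in \F\ \big\}
\]
is monochromatic. Re-indexing as $u_i := x_{i+1}$ and writing each substitution map $f_j \in \F$ as $\tilde{w}_j$ with $f_j(u) = u[\tilde{w}_j]$ translates this verbatim into the stated conclusion. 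Since $W$ is a total semigroup, the block-sequence condition holds automatically and imposes no further constraint.

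The only ``obstacle'' is a trivial matter of bookkeeping between the abstract notation of Theorem \ref{thm:ram1} (generic ``$+$'' for the semigroup operation and regressive maps $f$) and the concrete notation of the corollary (concatenation $\ct$ and substitution maps $u \mapsto u[\tilde{w}]$). No fresh construction or combinatorial argument is needed: the full content of the corollary is packaged into the existence statement of Theorem \ref{thm:gr-f-coh}, which is imported as a black box, together with the two abstract results \ref{thm:coh->ram} and \ref{thm:ram1} of this paper. The preceding corollary (the $\omega$-layer analogue) is handled identically, invoking Theorem \ref{thm:ram2} in place of Theorem \ref{thm:ram1}.
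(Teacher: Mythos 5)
Your proposal is correct and matches the paper's own (implicit) derivation exactly: the paper likewise obtains this corollary by chaining Theorem \ref{thm:gr-f-coh} (existence of an $\F$-coherent in $\Pi_W$), Theorem \ref{thm:coh->ram} ($W$ is $\F$-Ramsey), and Theorem \ref{thm:ram1}, with only notational translation between the abstract semigroup operation and concatenation/substitution. Your remark that the preceding corollary follows identically via Theorem \ref{thm:ram2} also agrees with the paper.
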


Corollary \ref{cor:ram-fin} gives Graham and Rothschild's original theorem \cite{grahamRamseyTheoremParameter1971}:

\begin{corollary}[Graham--Rothschild]\label{cor:gr}
	For every $m \geq k$ and every finite colouring of $W_k$, there exists $u \in W_m$ such that the set $\{ u[\tilde{w}] : \tilde{w}{\upharpoonright_m} \in W_k \}$ is monochromatic.
\end{corollary}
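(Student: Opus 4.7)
The proof is a direct application of Corollary \ref{cor:ram-fin} to the Graham--Rothschild semigroup $W$ and the family $\F$ of substitution maps. The plan is first to upgrade Theorem \ref{thm:gr-f-coh} --- the existence of an $\F$-coherent $(\alpha_i) \in \Pi_W$ --- to the statement that $W$ is $\F$-Ramsey, which is immediate from Theorem \ref{thm:coh->ram}. Given then a finite colouring $c: W_k \to [r]$ and an integer $m \geq k$, I would extend $c$ to a colouring $c': W \to [r+1]$ by assigning a single extra colour to every element of $W \setminus W_k$. Applying Corollary \ref{cor:ram-fin} with $n = m$ produces $u \in W_m$ such that $W_j \cap \{\tilde{w}(u) : \tilde{w} \in \F\}$ is $c'$-monochromatic for every $j \leq m$; specialising to $j = k$, where $c$ and $c'$ agree, the set $W_k \cap \{u[\tilde{w}] : \tilde{w} \in \F\}$ is $c$-monochromatic.

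It then remains only to identify this set with $\{u[\tilde{w}] : \tilde{w}{\upharpoonright_m} \in W_k\}$. Since $u \in W_m$ uses only the variables $x_1, \ldots, x_m$, the word $u[\tilde{w}]$ depends only on the initial segment $\tilde{w}{\upharpoonright_m}$, and the distinct variables appearing in $u[\tilde{w}]$ are precisely those appearing among $\tilde{w}_1, \ldots, \tilde{w}_m$. Hence $u[\tilde{w}] \in W_k$ exactly when $\tilde{w}{\upharpoonright_m}$ has $k$ distinct variables in increasing first-appearance order (the latter being automatic from the analogous property of $\tilde{w}$), i.e.\ when $\tilde{w}{\upharpoonright_m} \in W_k$. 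The two sets therefore coincide, and the corollary follows.

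The main obstacle --- and it is a mild one --- lies entirely in the bookkeeping between the ``action of $\F$ on a fixed $u$'' formulation used in Corollary \ref{cor:ram-fin} and the ``restriction of $\tilde{w}$'' formulation in the target statement, together with the small trick of extending the given partial colouring to all of $W$. All genuinely combinatorial work has already been done, in Theorem \ref{thm:gr-f-coh} (construction of the coherent sequence specific to the Graham--Rothschild semigroup) and in the general framework of \S\ref{sec:main} (passage from $\F$-coherence to $\F$-Ramseyness and from there to monochromatic patterns).
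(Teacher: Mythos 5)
Your proposal is correct and follows exactly the route the paper takes: the paper derives Corollary \ref{cor:gr} directly from Corollary \ref{cor:ram-fin} (with $W$ already known to be $\F$-Ramsey via Theorems \ref{thm:gr-f-coh} and \ref{thm:coh->ram}), leaving the colouring-extension and the identification of $W_k \cap \{u[\tilde{w}] : \tilde{w} \in \F\}$ with $\{u[\tilde{w}] : \tilde{w}{\upharpoonright_m} \in W_k\}$ as routine bookkeeping, which you have filled in accurately.
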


The case $k=0$, $m=1$ of Corollary \ref{cor:gr} gives the famed Hales--Jewett theorem:

\begin{corollary}[Hales--Jewett]\label{cor:hj}
	For every finite colouring of $W_0 = A^{<\omega}$, there is a variable word $u \in W_1$ such that $\{ u[a] : a \in A \}$ is monochromatic.
\end{corollary}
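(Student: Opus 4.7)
The plan is simply to specialize Corollary \ref{cor:gr} (Graham--Rothschild) to the case $k = 0$, $m = 1$ and unpack what the resulting monochromatic set looks like. There is no new combinatorial content; the entire content is interpretational.

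First I would apply Corollary \ref{cor:gr} with these parameters to a given finite colouring of $W_0 = A^{<\omega}$. This yields some $u \in W_1$ such that the set
\begin{equation*}
    \{ u[\tilde{w}] : \tilde{w}{\upharpoonright_1} \in W_0 \}
\end{equation*}
is monochromatic. The next step is to observe that a $1$-parameter word $u \in W_1$ contains only the single variable $x_1$ (possibly with multiple occurrences). Therefore, when we substitute an infinite parameter word $\tilde{w} \in (A \cup \{x_1, x_2, \ldots\})^\omega$ into $u$, the resulting word depends only on the first character $\tilde{w}(1)$, since no $x_i$ with $i \geq 2$ appears in $u$.

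Next I would note that the condition $\tilde{w}{\upharpoonright_1} \in W_0 = A^{<\omega}$ asserts that the length-$1$ prefix of $\tilde{w}$ is a (nonvariable) word, which is to say $\tilde{w}(1) \in A$. Conversely, every $a \in A$ is realised as $\tilde{w}(1)$ for some such $\tilde{w}$ (e.g.\ $\tilde{w} = a\, x_1\, x_2\, x_3 \cdots$). Consequently the indexing set above reduces exactly to
\begin{equation*}
    \{ u[a] : a \in A \},
\end{equation*}
which is therefore monochromatic, yielding the theorem.

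The only potential subtlety---and hence the single place requiring care---is in aligning the notation $u[\tilde{w}]$ of the substitution map (Example \ref{exm:gr-maps}) with the notation $u[a]$ used in the Hales--Jewett statement, which was originally defined for $u$ in the Hales--Jewett variable-word set and $a \in A$. Since this amounts to the identification of a $1$-parameter word $u \in W_1$ with a Hales--Jewett variable word over $A$ with variable symbol $x_1$, and the corresponding two notions of substitution agree character-by-character, no obstacle arises.
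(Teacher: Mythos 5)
Your proposal is correct and matches the paper exactly: the paper derives Corollary \ref{cor:hj} precisely as the case $k=0$, $m=1$ of Corollary \ref{cor:gr}, and your unpacking of the condition $\tilde{w}{\upharpoonright_1} \in W_0$ as $\tilde{w}(1) \in A$, so that $u[\tilde{w}]$ reduces to $u[a]$, is the intended (and only) content of that specialization. No gaps.
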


Corollary \ref{cor:hj} is known to imply van der Waerden's theorem \cite[p38]{grahamRamseyTheory1990}:

\begin{corollary}[van der Waerden]
	For every $k \in \N$ and finite colouring of $\N$, there is $a, d \in \N$ such that the arithmetic progression $\{ a + jd: 0 \leq j < k \}$ is monochromatic.
\end{corollary}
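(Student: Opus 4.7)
The plan is to derive van der Waerden's theorem from the Hales--Jewett theorem (Corollary \ref{cor:hj}) via the standard reduction in which one reads words over a numerical alphabet as integers under the "sum of letters" map. Fix $k \in \N$ and a finite colouring $c: \N \to [r]$. I would take the alphabet $A = \{0, 1, \ldots, k-1\}$ and consider the semigroup homomorphism $\sigma: A^{<\omega} \to \N$ that sends a word $w = (w_1, \ldots, w_n)$ to $w_1 + \cdots + w_n$. Pulling back $c$ along $\sigma$ gives a finite colouring $c' \defeq c \circ \sigma$ of $A^{<\omega} = W_0$.

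Next, I would apply Corollary \ref{cor:hj} to the colouring $c'$ to obtain a variable word $u \in W_1$ such that $\{u[a] : a \in A\}$ is $c'$-monochromatic. Let $s \in \N$ be the sum of the (fixed) letters of $u$ coming from $A$, and let $d \in \N$ be the number of occurrences of the variable $x$ in $u$; note $d \geq 1$ because $u$ is a variable word. Then, by direct computation, $\sigma(u[a]) = s + da$ for each $a \in A$, since substituting $a$ for each of the $d$ occurrences of $x$ adds $da$ to the total sum.

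Consequently, the set $\{s + da : a \in A\} = \{s,\ s+d,\ s+2d,\ \ldots,\ s+(k-1)d\}$ is $c$-monochromatic, which is exactly the desired monochromatic arithmetic progression of length $k$ with initial term $a \defeq s \in \N$ and common difference $d \in \N$. There is no real obstacle here beyond bookkeeping: one must only verify that the sum map $\sigma$ really is a semigroup homomorphism (so that $c'$ is well-defined as a colouring of the concatenation semigroup $A^{<\omega}$) and that $d \geq 1$, which follows from the definition of $W_1$. This is the reduction already hinted at in \cite[p38]{grahamRamseyTheory1990}.
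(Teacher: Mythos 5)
Your proof is correct and is precisely the standard reduction of van der Waerden's theorem to the Hales--Jewett theorem that the paper invokes by citation to \cite[p38]{grahamRamseyTheory1990}: alphabet $A=\{0,\ldots,k-1\}$, the letter-sum homomorphism $\sigma$, and the observation that $\sigma(u[a])=s+da$ with $d\geq 1$ the number of occurrences of the variable. Since the paper gives no proof beyond that citation, your write-up simply supplies the details of the same argument, and it checks out.
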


\begin{remark}\label{rmk:gr-ram}
	A construction satisfying Theorem \ref{thm:gr-f-coh} is given by Carlson, Hindman and Strauss \cite[Thm 2.12]{carlsonInfinitaryExtensionGraham2006}, and by Hindman, Strauss and Zamboni \cite{hindmanCombiningExtensionsHalesJewett2019}. In fact, in both cases, they directly construct an $\F$-Ramsey, all of whose elements are $\preccurlyeq$-minimal. Both arguments used to construct such a sequence are extraordinarily involved, long and complicated. The author believes that an $\F$-coherent could be constructed by a significantly simpler argument, from which an $\F$-Ramsey could be constructed via the general framework laid out in this paper, but has not been able to significantly simplify their argument, nor find an alternative method.
\end{remark}

\begin{remark}\label{rmk:str-vw}
	Nonetheless, if we restrict to the subset $\F' \subseteq \F$ of \textit{strong variable words}, i.e.\ those where all variables appear in order, a simple nonstandard argument can construct an $\F'$-coherent. We rely on the following result from the theory of compact semitopological semigroups:
\end{remark}

\begin{lemma}[{\cite[Lemma 2.3]{todorcevicIntroductionRamseySpaces2010}}]\label{lem:id-below}
	If $\alpha \in \hK$ is $u$-idempotent, and $I \subseteq \hK$ is a closed left ideal, then there is $u$-idempotent $\beta \in I + \halpha$ such that $\beta \preccurlyeq \alpha$.
\end{lemma}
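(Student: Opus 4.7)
The plan is to adapt the standard compact right-topological semigroup argument (essentially that of \cite[Lemma 2.3]{todorcevicIntroductionRamseySpaces2010}) to the $u$-semigroup setting. Writing $\alpha \cdot \beta$ for (a $\hK$-representative of) the $u$-product $\alpha + \hbeta$, the idea is to form a closed $u$-subsemigroup $I \cdot \alpha \subseteq \hK$ of representatives of products $\gamma \cdot \alpha$ with $\gamma \in I$, apply Ellis--Numakura (Lemma \ref{lem:u-id}) to extract a $u$-idempotent $\tau$ inside, and verify that $\beta := \alpha \cdot \tau$ is the desired $u$-idempotent in $I + \halpha$ with $\beta \preccurlyeq \alpha$.

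First I would check that $I \cdot \alpha$, the $\sim$-saturation in $\hK$ of the image of $I$ under the continuous right-multiplication map $\gamma \mapsto \gamma + \halpha$, is a closed $u$-subsemigroup. Closedness is immediate from continuity of this map, compactness of $I$, and Hausdorffness of $\hK/{\sim}$. For the subsemigroup property, given $\eta_1 \sim \gamma_1 \cdot \alpha$ and $\eta_2 \sim \gamma_2 \cdot \alpha$ in $I \cdot \alpha$, associativity of the $u$-product up to $\sim$ (a consequence of Proposition \ref{prop:dinasso}), combined with the left-ideal property of $I$ giving $\alpha \cdot \gamma_2 \sim \gamma_3 \in I$ and the fact that $I$ is itself a subsemigroup (as any left ideal is) giving $\gamma_1 \cdot \gamma_3 \sim \gamma_4 \in I$, yields $\eta_1 \cdot \eta_2 \sim \gamma_1 \cdot (\alpha \cdot \gamma_2) \cdot \alpha \sim \gamma_4 \cdot \alpha \in I \cdot \alpha$.

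Next, Lemma \ref{lem:u-id} produces a $u$-idempotent $\tau \in I \cdot \alpha$; say $\tau \sim \gamma \cdot \alpha$ for some $\gamma \in I$. The key identity $\tau \cdot \alpha \sim \tau$ follows from $\tau \cdot \alpha \sim (\gamma \cdot \alpha) \cdot \alpha \sim \gamma \cdot (\alpha \cdot \alpha) \sim \gamma \cdot \alpha \sim \tau$, using associativity and $u$-idempotence of $\alpha$. Define $\beta \in \hK$ as a representative of $\alpha \cdot \tau$. Then $\beta \sim \alpha \cdot \gamma \cdot \alpha \sim \gamma' \cdot \alpha$ for $\gamma' \sim \alpha \cdot \gamma \in I$ (left ideal), so $\beta \in I \cdot \alpha \subseteq I + \halpha$. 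For idempotence, $\beta \cdot \beta \sim \alpha \cdot (\tau \cdot \alpha) \cdot \tau \sim \alpha \cdot \tau \cdot \tau \sim \alpha \cdot \tau \sim \beta$, using $\tau \cdot \alpha \sim \tau$ and $u$-idempotence of $\tau$. Finally $\beta \preccurlyeq \alpha$ follows from $\beta \cdot \alpha \sim \alpha \cdot (\tau \cdot \alpha) \sim \alpha \cdot \tau \sim \beta$ and $\alpha \cdot \beta \sim (\alpha \cdot \alpha) \cdot \tau \sim \alpha \cdot \tau \sim \beta$.

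I expect the main obstacle to be the bookkeeping in the subsemigroup check: verifying that associativity of the $u$-product and the congruence properties of $\sim$ behave correctly through the iterated star-levels implicit in expressions like $\gamma_1 \cdot (\alpha \cdot \gamma_2) \cdot \alpha$, which in fully-starred form nests three levels of the star map. Once that algebra up to $\sim$ is in place, the extraction of $\tau$, the definition of $\beta = \alpha \cdot \tau$, and the checks of $u$-idempotence and $\beta \preccurlyeq \alpha$ all reduce to a routine chain of Proposition \ref{prop:dinasso}-style manipulations.
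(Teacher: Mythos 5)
The paper gives no proof of this lemma, simply citing Todor\v cevi\'c's Lemma 2.3, and your argument is precisely the standard one from that source transported to the $u$-semigroup setting: extract an idempotent $\tau$ from the closed $u$-subsemigroup $I+\halpha$, verify $\tau+\halpha\sim\tau$, and pass to $\beta\sim\alpha+\h\tau$ to get idempotence and $\beta\preccurlyeq\alpha$. The chains of $\sim$-manipulations all go through via Proposition \ref{prop:dinasso}, so the proposal is correct and matches the intended proof.
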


\begin{lemma}[{\cite[Lemma 7.9]{dinassoNonstandardMethodsRamsey2019}}]\label{lem:hj-ns}
	There are $u$-idempotents $\nu \in \h W_0$, $\om \in \h W_1$ such that $\om \preccurlyeq \nu$ and $\om[a] \sim \nu$ for every $a \in A$.
\end{lemma}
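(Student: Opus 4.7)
The plan is to construct $\nu$ by invoking $\preccurlyeq$-minimality inside the closed $u$-subsemigroup $\h W_0$, and then to locate $\om$ as a $u$-idempotent below $\nu$ inside the closed ideal $\h W_1$, using Lemma \ref{lem:id-below}. The key observation is that, for each $a \in A$, the substitution map $\sigma_a \colon W_{\leq 1} \to W_{\leq 1}$, $w \mapsto w[a]$, is a semigroup homomorphism that fixes $W_0$ pointwise; combined with minimality, this will force $\sigma_a(\om) \sim \nu$.

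Concretely, first I would apply Corollary \ref{cor:u-id-min} to $\h W_0$ (a closed $u$-subsemigroup of $\h W_{\leq 1}$) to obtain a $\preccurlyeq$-minimal $u$-idempotent $\nu \in \h W_0$. Since $W_1$ is a two-sided ideal of $W_{\leq 1}$, by transfer $\h W_1$ is a closed two-sided ideal of $\h W_{\leq 1}$, and in particular a closed left ideal. Applying Lemma \ref{lem:id-below} with $K = W_{\leq 1}$, idempotent $\nu$, and closed left ideal $\h W_1$ yields a $u$-idempotent $\om \in \h W_1 + \h\nu$ satisfying $\om \preccurlyeq \nu$. Because $\h W_1$ is also a right ideal, $\h W_1 + \h\nu \subseteq \h W_1$ (up to $\sim$), so $\om \in \h W_1$ as required.

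It remains to verify $\om[a] \sim \nu$ for every $a \in A$. By transfer, each $\sigma_a$ extends to a semigroup homomorphism $\h W_{\leq 1} \to \h W_{\leq 1}$ that acts as the identity on $\h W_0$; in particular $\sigma_a(\nu) = \nu$. Applying $\sigma_a$ to the defining relations $\om + \h\nu \sim \nu + \h\om \sim \om$ of $\om \preccurlyeq \nu$, and using the homomorphism property together with Proposition \ref{prop:dinasso}\,(ii), yields $\sigma_a(\om) + \h\nu \sim \nu + \h\sigma_a(\om) \sim \sigma_a(\om)$. Hence $\sigma_a(\om)$ is a $u$-idempotent in $\h W_0$ with $\sigma_a(\om) \preccurlyeq \nu$, and the $\preccurlyeq$-minimality of $\nu$ inside $\h W_0$ forces $\sigma_a(\om) \sim \nu$, which says exactly $\om[a] \sim \nu$.

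I do not foresee any serious obstacle. The main subtlety is that the $\preccurlyeq$-minimality in the last step must be read within $\h W_0$, not the ambient $\h W_{\leq 1}$, but this is automatic: the relation $\preccurlyeq$ depends only on $+$ and $\sim$, both of which restrict cleanly to $\h W_0$, and Corollary \ref{cor:u-id-min} is stated for an arbitrary closed $u$-subsemigroup. Tracking nonstandard levels when writing $\sigma_a(\h\nu) \sim \h\sigma_a(\nu) = \h\nu$ also requires a moment of care, but follows from Axiom \ref{axs:basic}.\ref{ax:f(*x)}.
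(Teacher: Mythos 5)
Your proposal is correct and follows essentially the same route as the paper: take a $\preccurlyeq$-minimal $u$-idempotent $\nu \in \h W_0$ via Corollary \ref{cor:u-id-min}, obtain $\om \preccurlyeq \nu$ in the closed left ideal $\h W_1$ via Lemma \ref{lem:id-below}, and use that substitution by $a$ is a homomorphism fixing $W_0$ to get $\om[a] \preccurlyeq \nu$, whence minimality forces $\om[a] \sim \nu$. The only cosmetic difference is that you expand the step ``a homomorphism preserves $\preccurlyeq$'' by applying $\sigma_a$ to the defining relations, which the paper leaves implicit.
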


\begin{proof}
	Let $T \defeq W_0 \cup W_1$. Pick any $\preccurlyeq$-minimal $u$-idempotent $\nu \in \h W_0$ by Lemma \ref{cor:u-id-min}. Note that $\h W_1$ is a closed left ideal of $\h T$, so by Lemma \ref{lem:id-below}, pick $u$-idempotent $\om \in \h W_1 \ct \h \nu \subseteq \h W_1$ such that $\om \preccurlyeq \nu$. Now, for any $a \in A$, the substitution map $v \mapsto v[a]$ is a homomorphism $T \to W_0$, hence $\om[a] \preccurlyeq \nu[a] = \nu$ since $\nu \in \h W_0$. Since $\nu$ is $\preccurlyeq$-minimal, it follows that $\om[a] \sim \nu$.
\end{proof}

\begin{theorem}\label{thm:gr-str-f-coh}
	There is an $\F'$-coherent $(\alpha_i) \in \Pi_W$.
\end{theorem}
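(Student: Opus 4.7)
The plan is to build the $\F'$-coherent sequence $(\alpha_k) \in \Pi_W$ inductively, iterating the argument of Lemma \ref{lem:hj-ns}. I would maintain the hypothesis that $\alpha_0 \succcurlyeq \alpha_1 \succcurlyeq \cdots \succcurlyeq \alpha_k$ are $u$-idempotents with each $\alpha_i \in \h W_i$ chosen to be $\preccurlyeq$-minimal in $\h W_i$, and that $\alpha_i[\tilde w] \sim \alpha_{j(\tilde w, i)}$ for every strong variable word $\tilde w$ and every $i \leq k$, where $j(\tilde w, i)$ denotes the number of distinct variables appearing among the first $i$ characters of $\tilde w$. The base case $(\alpha_0, \alpha_1) = (\nu, \om)$ is covered verbatim by Lemma \ref{lem:hj-ns}.

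For the inductive step, note that $W_{\leq k+1}$ is a total layered semigroup in which $W_{k+1}$ is a two-sided ideal, so $\h W_{k+1}$ is a closed left ideal of the compact $u$-semigroup $\h W_{\leq k+1}$. Applying Lemma \ref{lem:id-below} to the $u$-idempotent $\alpha_k$ and this left ideal yields $\alpha_{k+1}^{\circ} \in \h W_{k+1} \ct \halpha_k \subseteq \h W_{k+1}$, a $u$-idempotent with $\alpha_{k+1}^{\circ} \preccurlyeq \alpha_k$. Refining via Corollary \ref{cor:u-id-min} applied to the closed $u$-subsemigroup $\{\beta \in \h W_{k+1} : \beta \preccurlyeq \alpha_{k+1}^{\circ}\}$ gives a $u$-idempotent $\alpha_{k+1}$ that is additionally $\preccurlyeq$-minimal in $\h W_{k+1}$: any $u$-idempotent $\beta \preccurlyeq \alpha_{k+1}$ in $\h W_{k+1}$ automatically lies in the refining subsemigroup, and so $\beta \sim \alpha_{k+1}$.

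To verify coherence, fix a strong variable word $\tilde w$ and let $j = j(\tilde w, k+1)$. Since $v \mapsto v[\tilde w]$ is a semigroup homomorphism on $W$, it preserves $u$-idempotence and the partial order $\preccurlyeq$, so $\alpha_{k+1}[\tilde w]$ is a $u$-idempotent in $\h W_j$ with $\alpha_{k+1}[\tilde w] \preccurlyeq \alpha_k[\tilde w] \sim \alpha_{j(\tilde w, k)}$ by the inductive hypothesis. If $j = j(\tilde w, k)$ (the $(k+1)$-th character of $\tilde w$ is a letter or an already-seen variable), then $\alpha_{k+1}[\tilde w] \preccurlyeq \alpha_j$, and $\preccurlyeq$-minimality of $\alpha_j$ in $\h W_j$ forces $\alpha_{k+1}[\tilde w] \sim \alpha_j$.

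The main obstacle is the remaining case $j = j(\tilde w, k) + 1$, in which the $(k+1)$-th character of $\tilde w$ introduces a new variable $x_j$; here the chain only delivers $\alpha_{k+1}[\tilde w] \preccurlyeq \alpha_{j-1}$, and upgrading this to $\alpha_{k+1}[\tilde w] \preccurlyeq \alpha_j$ is delicate. The plan is to exploit the ideal form $\alpha_{k+1} = \gamma \ct \halpha_k$ given by Lemma \ref{lem:id-below}: writing $\alpha_{k+1}[\tilde w] \sim \sigma(\gamma) \ct \h(\alpha_k[\tilde w])$ for the induced substitution $\sigma$ on $W_{\leq k+1}$, applying $\alpha_k[\tilde w] \sim \alpha_{j-1}$, and then combining with Proposition \ref{prop:dinasso}, the $u$-idempotence of the $\alpha_i$, and the strongness condition on $\tilde w$ to verify the missing half of $\alpha_{k+1}[\tilde w] \preccurlyeq \alpha_j$, after which $\preccurlyeq$-minimality of $\alpha_j$ finishes the argument.
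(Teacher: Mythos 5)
Your construction is incomplete at exactly the point you flag, and the sketch you offer for closing it does not work. In the case where the $(k+1)$-th character of $\tilde w$ is a \emph{new} variable $x_j$, the chain of inequalities only yields $\alpha_{k+1}[\tilde w] \preccurlyeq \alpha_{j-1}$, and there is no mechanism forcing $\alpha_{k+1}[\tilde w]$ to be $\preccurlyeq \alpha_j$, let alone $u$-equivalent to it: $\alpha_j$ was fixed at an earlier stage of your induction with no reference to $\alpha_{k+1}$, and $\h W_j$ generally contains many pairwise inequivalent minimal idempotents below $\alpha_{j-1}$. Writing $\alpha_{k+1} \sim \gamma \ct \halpha_k$ and hence $\alpha_{k+1}[\tilde w] \sim \gamma[\tilde w] \ct \halpha_{j-1}$ does not help, because $\gamma$ is an arbitrary element supplied by Lemma \ref{lem:id-below} with no relation to $\alpha_j$. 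This is not a removable technicality: coherence is a joint constraint on the whole sequence, which is why a coordinate-by-coordinate construction via minimal idempotents is precisely the ``extraordinarily involved'' route of Carlson--Hindman--Strauss alluded to in Remark \ref{rmk:gr-ram}. Your minimality trick does handle the old-variable case correctly, but that was never the obstruction.

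The paper avoids the induction entirely by writing down the sequence explicitly: with $\nu, \om$ as in Lemma \ref{lem:hj-ns}, set $\alpha_0 = \nu$, $\alpha_1 = \om$, and
\begin{equation*}
	\alpha_i\ \sim\ \om[x_1] \ct \h\om[x_2] \ct \cdots \ct \kh{i}\om[x_i] \qquad (i \geq 2).
\end{equation*}
Strongness is what makes this formula self-reproducing: a strong variable word sends the $m$-th factor $\kh{(m-1)}\om[x_m]$ either to $\kh{(m-1)}\om[a] \sim \kh{(m-1)}\nu$ for a letter $a$, or to $\kh{(m-1)}\om[x_p]$ where the indices $p$ occur in consecutive nondecreasing blocks (because \emph{all} occurrences of the variables in $\tilde w$ appear in order, not just the first ones). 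The interpolated copies of $\nu$ are absorbed using $\om \preccurlyeq \nu$, and repeated adjacent copies of $\om[x_p]$ at successive star levels collapse by $u$-idempotence of $\om$ together with Proposition \ref{prop:dinasso}, leaving exactly $\om[x_1] \ct \cdots \ct \kh{j}\om[x_j] \sim \alpha_j$. In other words, the new-variable case is resolved not by minimality but by choosing every $\alpha_j$ from the start to be the canonical word that the substitutions are forced to produce. If you want to salvage your approach, you would need to replace your independently chosen $\alpha_j$'s by elements of this explicit form, at which point the induction becomes the paper's direct computation.
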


\begin{proof}
	Let $\nu$, $\om$ be as in Lemma \ref{lem:hj-ns}, and define $\alpha_i$ by $\alpha_0 = \nu$, $\alpha_1 = \om$, $\alpha_i \sim \om[x_1] \ct \h\om[x_2] \ct \cdots \ct \kh{i}\om[x_i]$ for $i \geq 2$. By $u$-idempotence of $\nu$, $\om$ and the fact that $\om \preccurlyeq \nu$, $(\alpha_i)$ is $\F'$-coherent.
\end{proof}

\subsection{Galvin--Glazer and Hindman's theorem}
\label{sec:gg}

Let $S$ be an adequate partial semigroup such that $s+s$ is never defined. $S$ is trivially layered by the layering map $\ell: S \to \N$, $s \mapsto 0$. Let $\F$ consist of only the identity map $\id: S \to S$, which is a regressive map. Then, any $\alpha \in \hS$ is trivially $\F$-coherent, thus $S$ is $\F$-Ramsey. Applying Theorem \ref{thm:ram1} with $n=0$ gives the \textit{Galvin--Glazer theorem} (see \cite[Thm 2.20]{todorcevicIntroductionRamseySpaces2010} or \cite{comfortUltrafiltersOldNew1977}):

\begin{corollary}[Galvin--Glazer]
	For any finite colouring of $S$, there exists an infinite sequence $(x_i)_{i=0} \subseteq S$ of distinct elements such that the set
	\begin{equation*}
		\big\{\ x_{n_0} + \cdots + x_{n_{\ell-1}}\ :\ n_0 < \cdots < n_{\ell-1},\ \big\}
	\end{equation*}
	is $c$-monochromatic.
\end{corollary}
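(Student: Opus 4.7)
The plan is to apply the framework of Theorems~\ref{thm:coh->ram} and~\ref{thm:ram1} to the trivial setup $\F = \{\id\}$. First, I would check that $(S,\ell)$ with $\ell \equiv 0$ is indeed an adequate partial layered semigroup: the condition $\ell(s+t) = \max\{\ell(s),\ell(t)\}$ is vacuous when both sides are $0$, and the layer-adequacy condition reduces to the adequacy of $S$ itself, which holds by hypothesis. The identity $\id: S \to S$ trivially satisfies all four clauses of Definition~\ref{defn:regr}, and $\F = \{\id\}$ is plainly locally finite.

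Since $S = S_0$ is adequate, the proposition established in~\S\ref{sec:pi} gives that $\gS$ is non-empty. Picking any $\alpha \in \gS_0 = \Pi_S$, we observe that $\id(\alpha) = \alpha \sim \alpha$, so the one-term ``sequence'' $(\alpha)$ is $\F$-coherent in the sense of Definition~\ref{defn:coh}. Theorem~\ref{thm:coh->ram} then asserts that $S$ is $\F$-Ramsey.

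Next I would invoke Theorem~\ref{thm:ram1} with $\delta = 1$ and $n = 0$. Given a finite colouring $c$ of $S$, this produces a block sequence $(x_i)_{i=1}^\infty \subseteq S_0 = S$ for which the set
\begin{equation*}
S_0 \cap \{f_1(x_{n_1}) + \cdots + f_\ell(x_{n_\ell}) : 1 \leq n_1 < \cdots < n_\ell,\ f_1,\ldots,f_\ell \in \F\}
\end{equation*}
is $c$-monochromatic. Because $\F = \{\id\}$, this collapses to the set $\{x_{n_1} + \cdots + x_{n_\ell} : n_1 < \cdots < n_\ell\}$ of all finite sums from the sequence, which is exactly the conclusion of the corollary (modulo the inessential reindexing $i \mapsto i-1$ to start at $0$).

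The one remaining loose end is distinctness of the $x_i$, which is where the hypothesis that $s+s$ is never defined gets used. If $x_i = x_j$ for some $i < j$, then by the definition of a block sequence $x_i + x_j$ must be defined, but this sum equals $x_i + x_i$, which is undefined by hypothesis; contradiction. So the $x_i$ are pairwise distinct. The argument is essentially a sanity check that the framework specialises correctly to the simplest layered setting, so I do not anticipate any real obstacle.
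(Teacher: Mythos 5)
Your proposal is correct and follows essentially the same route as the paper: layer $S$ trivially by $\ell \equiv 0$, take $\F = \{\id\}$, observe that any element of $\gS_0$ is $\F$-coherent, and apply Theorems \ref{thm:coh->ram} and \ref{thm:ram1} with $n=0$. Your explicit derivation of distinctness from the block-sequence property together with the hypothesis that $s+s$ is never defined is a detail the paper leaves implicit, but it is exactly the intended reading.
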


The special case $S = \N$ and $A = \{ x_i : i \in \N \}$ gives us Hindman's finite sums theorem \cite[Thm 3.1]{hindmanFiniteSumsSequences1974}:

\begin{corollary}[Hindman]
	For any finite colouring $c: \N \to r$, there exists an infinite set $A \subseteq \N$, such that the set $\FS(A)$ of all finite, nonrepeating sums from $A$ is $c$-monochromatic.
\end{corollary}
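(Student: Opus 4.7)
The main challenge is that the Galvin--Glazer theorem as stated requires the partial semigroup $S$ to satisfy ``$s+s$ is never defined'', which rules out directly applying it to $(\N,+)$. The plan is to realise $\N_{>0}$ instead as a partial semigroup under a restricted addition, so that this hypothesis holds while block sequences still correspond to sets of natural numbers whose finite subset sums reproduce the Galvin--Glazer sums.

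Concretely, I would equip $\N_{>0}$ with the partial operation ``$n+m$ defined iff the binary supports of $n$ and $m$ are disjoint'', with value given by the ordinary sum. Three things need checking. Associativity is inherited from $+$ on $\N$: if $(n+m)+k$ is defined then the binary supports of $n, m, k$ are pairwise disjoint, which makes $n+(m+k)$ defined with the same value, and this works symmetrically. Adequacy follows by taking $2^N$ for $N$ larger than the highest bit appearing in a given finite $F \subseteq \N_{>0}$. Finally, $s + s$ is never defined, since any positive $s$ has nonempty binary support, which cannot be disjoint from itself.

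Applying the Galvin--Glazer theorem to this partial semigroup, with $c$ restricted to $\N_{>0}$, would yield an infinite sequence $(x_i)$ of distinct positive integers forming a block sequence, such that
\begin{equation*}
    \big\{\ x_{n_0} + \cdots + x_{n_{\ell-1}}\ :\ n_0 < \cdots < n_{\ell-1}\ \big\}
\end{equation*}
is $c$-monochromatic. Being a block sequence, every pairwise sum $x_i + x_j$ (for $i \neq j$) must be defined, forcing the $x_i$ to have pairwise disjoint binary supports; consequently every longer sum is also defined and agrees with the ordinary sum in $\N$. Setting $A = \{x_i : i \in \N\}$ then gives an infinite subset of $\N$ whose finite nonrepeating sums are exactly the elements enumerated above, so $\FS(A)$ is $c$-monochromatic. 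The only real obstacle is the initial observation that a partial structure on $\N$ is needed; once the disjoint-binary-support operation is identified as the right one, the argument is routine verification and translation.
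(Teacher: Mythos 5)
Your proposal is correct and takes essentially the same route as the paper, which obtains Hindman's theorem as ``the special case $S = \N$'' of the Galvin--Glazer theorem. In fact you are more careful than the paper: the total semigroup $(\N,+)$ has $s+s$ always defined, so some restriction of the operation is needed before the paper's Galvin--Glazer theorem (stated only for adequate partial semigroups in which $s+s$ is never defined) applies, and your disjoint-binary-support partial operation --- equivalently, routing through the finite unions correspondence --- supplies exactly that missing verification while still recovering the full statement about $\FS(A)$.
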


\section{Located variable words}
\label{sec:lv-words}

In this section, we give a simple nonstandard proof of Bergelson, Blass and Hindman's partition theorem for \textit{located variable words} \cite[Thm 4.1]{bergelsonPartitionTheoremsSpaces1994}, using Theorem \ref{thm:gr-f-coh} and the general framework of layered semigroups developed herein. In fact, we obtain a multivariable generalisation of that result, related to a result of Solecki \cite[\S 4.3.1]{soleckiMonoidActionsUltrafilter2019}.

Fix a finite alphabet $A$. A \textit{located word} over $A$ \cite[\S4.2]{lupiniTrees} is a partial function $w: \N \to A$, where $\dom(w)$ is finite and nonempty. In the vein of Example \ref{exm:gr-sg}, a \textit{located $k$-parameter word} over $A$ is a partial function $w: \N \to A \cup \{ x_1, \ldots, x_k \}$, where $\dom(w)$ is finite, all the $x_i$ appear, and in increasing order.

For each $k \in \N$, we let $L_k$ be the set of all located $k$-parameter words over $A$ (where $L_0$ simply contains located words). Then, $L = \bigcup_{i=0}^\infty L_i$ is a layered semigroup, with the operation $+$ defined
\begin{equation*}
	(w+v)(n) = \begin{cases}
		w(n) & \text{ if } n \in \dom(w) \\
		v(n) & \text{ if } n \in \dom(v) \\
		\text{undefined } & \text{ otherwise} \\
	\end{cases}
\end{equation*}
only when $\dom(w) \cap \dom(v) = \varnothing$. Notice that $L$ is an adequate partial semigroup, and the $\gL_m$ are exactly the \textit{cofinite} located $m$-variable words $\alpha$---those whose domains $\dom(\alpha) \subseteq \h\N$ are disjoint from $\N$.

As in Example \ref{exm:gr-maps}, substitution maps $u \mapsto u[\tilde{w}]$ are defined by infinite (total/nonlocated) variable words $\tilde{w}$, in much the same way:
\begin{equation*}
	u[\tilde{w}](n) = \begin{cases}
		u(n) & \text{ if } n \in \dom(u), u(n) \neq x_i \text{ for all } i \\
		\tilde{w}_i & \text{ if } n \in \dom(u), u(n) = x_i \text{ for some } i \\
		\text{undefined } & \text{ if } n \notin \dom(u) \\
	\end{cases}
\end{equation*}
In fact, we can naturally identify the Graham--Rothschild semigroup $W$ as a subset of $L$, and the above definition extends the substitution maps on $W.$ As before, all such substitution maps are regressive, and the collection $\F$ of all of them is locally finite, and closed under composition.

Assuming the existence of an $\F$-coherent in $\Pi_W$, we can construct an $\F$-coherent in $\Pi_L$ by a simple nonstandard argument:

\begin{lemma}\label{thm:l-f-coh}
	$L$ has an $\F$-coherent.
\end{lemma}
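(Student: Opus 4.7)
I aim to construct an $\F$-coherent $(\beta_i) \in \Pi_L$ from the $\F$-coherent $(\alpha_i) \in \Pi_W$ provided by Theorem \ref{thm:gr-f-coh}. Viewed in $\h L_i$ via the natural inclusion $W \hookrightarrow L$, each $\alpha_i$ has domain $[1, |\alpha_i|]$, which meets $\N$ and so prevents $\alpha_i$ from lying in $\gL_i$. The idea is to shift each $\alpha_i$ into a cofinite domain using a \emph{standard} shift function, so that $u$-equivalence is preserved automatically via Proposition \ref{prop:dinasso}(ii)---an internal shift by some fixed infinite $N$ would be simpler to write down but would not give us $\sim$-preservation for free.

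Concretely, define $\phi: W \to L$ by setting $\phi(u)$ to be the located word with $\dom(\phi(u)) = [|u|+1,\, 2|u|]$ and values $\phi(u)(m) = u(m-|u|)$. Each $f \in \F$ is a substitution map that replaces every variable by a single character, so it preserves length and commutes with $\phi$ (a direct check gives $f \circ \phi = \phi \circ f$ on $W$, and by transfer the same holds between $\h\phi$ and the extended $f$ on $\h L$). Now set $\beta_i \defeq \h\phi(\alpha_i) \in \h L_i$. To verify $\F$-coherence, given $f \in \F$ and $j$, pick $k \leq j$ with $f(\alpha_j) \sim \alpha_k$ by $\F$-coherence of $(\alpha_i)$; the transferred commutativity and Proposition \ref{prop:dinasso}(ii) applied to $\phi$ then yield
\[
f(\beta_j) \;=\; \h\phi\bigl(f(\alpha_j)\bigr) \;\sim\; \h\phi(\alpha_k) \;=\; \beta_k,
\]
as required.

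\textbf{Main obstacle.} The remaining step is confirming that $\beta_i \in \gL_i$, equivalently that $|\alpha_i| \in \h\N \setminus \N$ for every $i$; otherwise $\dom(\beta_i) = [|\alpha_i|+1,\, 2|\alpha_i|]$ could meet $\N$. To handle this, first upgrade $(\alpha_i)$ to an $\F$-Ramsey sequence via Theorem \ref{thm:coh->ram}, so that each $\alpha_i$ is $u$-idempotent. Applying the length homomorphism $|\cdot|: W \to \N$, each $|\alpha_i|$ becomes a $u$-idempotent in $(\h\N, +)$, hence is either $0$ or infinite. The case $|\alpha_0| = 0$ is excluded by the Ramsey relation $\alpha_0 \succcurlyeq \alpha_j$: if $\alpha_0 = \varnothing$, then $\alpha_0 + \h\alpha_j = \h\alpha_j \sim \alpha_j$, forcing $\alpha_j \sim \varnothing$ and contradicting $|\alpha_j| \geq j \geq 1$ for any $j \geq 1$. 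Hence every $|\alpha_i|$ is infinite, placing each $\beta_i$ in $\gL_i$ and yielding the desired $\F$-coherent $(\beta_i) \in \Pi_L$.
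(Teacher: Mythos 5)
Your overall strategy mirrors the paper's: transport the $\F$-coherent of Theorem \ref{thm:gr-f-coh} from $\Pi_W$ into $\Pi_L$ by a \emph{standard} map commuting with all substitution maps (so that Proposition \ref{prop:dinasso}.\ref{prop:f-alpha} preserves $\sim$ for free), arranged so that the images have domains disjoint from $\N$. The paper uses the ``second half'' map $g$ where you use the shift $\phi$ onto $[\abs{u}+1,2\abs{u}]$; your commutation check and the coherence transfer $f(\beta_j)=\h\phi(f(\alpha_j))\sim\h\phi(\alpha_k)=\beta_k$ are correct, and your $\phi$ has the mild advantage of manifestly carrying $i$-parameter words to located $i$-parameter words. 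Both proofs then hinge on every $\abs{\alpha_i}$ being infinite. The paper gets this directly from coherence: substitution preserves length, so if some $\abs{\alpha_i}$ were a standard $n$, then all $\abs{\alpha_j}$ would equal $n$, contradicting the existence of $\alpha_{n+1}$.

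Your route to that fact contains a genuine error in its last step. Upgrading to an $\F$-Ramsey via Theorem \ref{thm:coh->ram} and concluding that each $\abs{\alpha_i}$ is a $u$-idempotent of $(\h\N,+)$, hence $0$ or infinite, is fine, and $\abs{\alpha_i}\geq i$ handles $i\geq 1$. But the exclusion of $\abs{\alpha_0}=0$ does not follow as you claim: by the definition of $\preccurlyeq$, the relation $\alpha_j\preccurlyeq\alpha_0$ says $\alpha_0+\halpha_j\ \sim\ \alpha_j+\halpha_0\ \sim\ \alpha_j$ --- the products are $u$-equivalent to the \emph{smaller} element $\alpha_j$, not to $\alpha_0$. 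If $\alpha_0=\varnothing$ then $\alpha_0+\halpha_j=\halpha_j\sim\alpha_j$ and $\alpha_j+\halpha_0=\alpha_j$, so the Ramsey relation is satisfied outright and nothing forces $\alpha_j\sim\varnothing$; you appear to have read the absorption in the wrong direction. The gap is easily repaired, and without the detour through Theorem \ref{thm:coh->ram}: take any full substitution $f$ (every variable replaced by a letter of $A$), so $f(\alpha_1)\in\h W_0$ and coherence forces $f(\alpha_1)\sim\alpha_0$; since $f$ preserves length, $\abs{\alpha_0}\sim\abs{f(\alpha_1)}=\abs{\alpha_1}\geq 1$, and Proposition \ref{prop:dinasso}.\ref{prop:u-eq-injec} then rules out $\abs{\alpha_0}=0$. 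With that substitution your proof goes through.
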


\begin{proof}
	Pick an $\F_W$-coherent sequence $(\alpha_i)_{i=0}^\infty$ in the Graham--Rothschild semigroup. Note that all substitution maps preserve the length of words, thus each $\alpha_i$ must have infinite length. If any $\alpha_i$ was finite, $\F$-coherence and Proposition \ref{prop:dinasso}.\ref{prop:u-eq-injec} would imply all the $\alpha_i$ are finite, and of the same length $\abs{\alpha_i}=n$. But then $\alpha_{n+1}$ cannot exist, since there are no $(n+1)$-parameter words of length $n$.
	
	Define $g: L \to L$ as the map which takes the ``second half'' of any located variable word:
	\begin{equation*}
		g(w)(n) = \begin{cases}
			w(n) & n \geq \floor{ \big( \abs{w}/2 \big) } \\
			\text{undefined} & n < \floor{ \big( \abs{w}/2 \big) }
		\end{cases}
	\end{equation*}
	Note that $g$ commutes with every substitution map, i.e.\ $g \big( u[\tilde{w}] \big) = g(u)[\tilde{w}]$ for all $u \in L$, $\tilde{w} \in \F$. By transfer, this is also true for nonstandard words $\alpha \in \h\hspace{-0.1em}L$.
	
	Now, define $\beta_i = g(\alpha_i)$ for each $i \in \N$. Since the lengths $\abs{\alpha_i} \in \h\N \setminus \N$ are infinite, it follows that each $\beta_i$ is undefined up to $\floor{ \big( \abs{\alpha_i}/2\big)} \in \h\N \setminus \N$, whence $\beta_i \in \gL_i$. Thus, $(\beta_i)_{i=0}^\infty \in \Pi_L$. The $\F$-coherence of $(\beta_i)$ follows from Proposition \ref{prop:dinasso}.\ref{prop:f-alpha}---for all $i \in \N$ and $\tilde{w} \in \F$, there is $j \leq i$ such that
	\begin{equation*}
		\beta_i[\tilde{w}]\ =\ g(\alpha_i)[\tilde{w}]\ =\ g \big( \alpha_i[\tilde{w}] \big)\ \sim\ g \big( \alpha_j \big)\ =\ \beta_j \qedhere
	\end{equation*}
	
\end{proof}

It follows by Theorem \ref{thm:coh->ram} that $L$ is $\F$-Ramsey. Applying Theorems \ref{thm:ram1} gives a multivariable generalisation of the Bergelson--Blass--Hindman theorem on located words \cite[Thm 4.1]{bergelsonPartitionTheoremsSpaces1994}:

\begin{corollary}\label{cor:solecki}
	For any $n$ and finite colouring of $L$, there exists a block sequence $(u_i)_{i=1}^\infty \subseteq L_n$ such that for every $k \leq n$, the following set is monochromatic:
	\begin{equation*}
		L_k\ \cap\ \big\{\ u_{n_1}[\tilde{w}_1] + \cdots + u_{n_\ell}[\tilde{w}_\ell]\ :\ n_1 < \cdots < n_\ell,\ \tilde{w}_1, \ldots, \tilde{w}_\ell \in \F\ \big\}
	\end{equation*}
\end{corollary}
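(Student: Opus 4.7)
The plan is to directly combine the results already assembled. Lemma \ref{thm:l-f-coh} gives an $\F$-coherent element $(\beta_i)_{i=0}^\infty \in \Pi_L$, and Theorem \ref{thm:coh->ram} then upgrades this to the existence of an $\F$-Ramsey element in $\Pi_L$, so that the layered semigroup $L$ is itself $\F$-Ramsey in the sense of Definition \ref{defn:f-ram}. With this in hand, Corollary \ref{cor:solecki} is nothing more than the conclusion of Theorem \ref{thm:ram1} specialised to $S=L$ and to the particular collection $\F$ of substitution maps, for the given $n < \delta = \omega$.

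Concretely, I would first verify the hypotheses of Theorem \ref{thm:ram1}: (a) $L$ is an adequate partial layered semigroup with $\delta = \omega$ layers; (b) $\F$ is a locally finite collection of regressive maps on $L$ (this is the located analogue of Example \ref{exm:gr-maps}, which is stated just before the corollary, and follows because restriction of a substitution map to $L_{\leq i}$ is determined by the first $i$ coordinates of $\tilde w$, of which there are only finitely many over a finite alphabet); (c) $L$ is $\F$-Ramsey by the two-line chain Lemma \ref{thm:l-f-coh} $\Rightarrow$ Theorem \ref{thm:coh->ram}. Once (a)--(c) are recorded, Theorem \ref{thm:ram1} applied with this $n$ produces a block sequence $(u_i)_{i=1}^\infty \subseteq L_n$ witnessing monochromaticity of
\begin{equation*}
L_k \cap \big\{\ u_{n_1}[\tilde w_1] + \cdots + u_{n_\ell}[\tilde w_\ell]\ :\ n_1 < \cdots < n_\ell,\ \tilde w_1,\ldots,\tilde w_\ell \in \F\ \big\}
\end{equation*}
for each $k \leq n$, which is exactly the statement of the corollary.

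There is essentially no obstacle beyond bookkeeping: the real work was done in proving Lemma \ref{thm:l-f-coh} (lifting a coherent in $W$ to one in $L$ via the ``second half'' map $g$) and in the general framework of \S\ref{sec:main}. The only minor point to double-check is that the ``block sequence'' conclusion of Theorem \ref{thm:ram1} is meaningful in $L$: since $L$ is only a partial semigroup, we must confirm that the $u_i$ produced by the transfer argument in the proof of Theorem \ref{thm:ram1} genuinely have pairwise disjoint domains after the application of arbitrary substitution maps $\tilde w_1,\ldots,\tilde w_\ell \in \F$. But this is built into the definition of block sequence and is guaranteed by the ``is defined'' clauses inside the elementary formulas that drive the inductive transfer step of that proof. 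Hence the proof reduces to a one-paragraph appeal: invoke Lemma \ref{thm:l-f-coh}, then Theorem \ref{thm:coh->ram}, then Theorem \ref{thm:ram1}.
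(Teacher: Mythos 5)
Your proposal is correct and follows exactly the paper's route: the paper likewise deduces that $L$ is $\F$-Ramsey from Lemma \ref{thm:l-f-coh} via Theorem \ref{thm:coh->ram}, and then reads off the corollary as an instance of Theorem \ref{thm:ram1}. The extra hypothesis-checking you include (local finiteness and regressivity of the substitution maps on $L$) is sound and matches what the paper establishes in the preceding discussion.
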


The original theorem of Bergelson, Blass and Hindman is the case $n=1$. If we instead take Corollary \ref{cor:solecki} with $n=k=1$ and $A=\varnothing$, we again obtain Hindman's finite unions theorem (Corollary \ref{cor:hindman-fu}).

\begin{remark}
	The proof of Lemma \ref{thm:l-f-coh} is also valid for the subcollection $\F' \subseteq \F$ of substitution maps corresponding to \textit{strong variable words} (see Remark \ref{rmk:str-vw}), so along with Theorem \ref{thm:gr-str-f-coh}, this gives a short nonstandard proof of the weaker version of Corollary \ref{cor:solecki} for strong variable words.
\end{remark}

\begin{remark}
	As with Corollary \ref{cor:str-gow}, we can also restrict the operation $w+v$ to only be defined when $w < v$, i.e.\ $\dom(w) < \dom(v)$. The proof then goes through unchanged, and we get a stronger version of Corollary \ref{cor:solecki}, where the block sequence $(u_1 < u_2 < \cdots) \subseteq L_n$ can be taken to be increasing.
\end{remark}

\subsection{A common generalisation}
\label{sec:gowers-bbh}

It is notable that Gowers' theorem and the Bergelson--Blass--Hindman theorem \cite[Thm 4.1]{bergelsonPartitionTheoremsSpaces1994} both generalise the finite unions theorem of Hindman \cite[Cor 3.3]{hindmanFiniteSumsSequences1974}. Here, we present a common generalisation of both theorems \cite{lupiniGowersRamseyTheorem2017,lupiniTrees}, and prove it using the general framework of layered semigroups previously described.

Let $A$ be a finite alphabet, and $X = \{ x_0, x_1, x_2, \ldots \}$ be a countably infinite set of variables. Note that, compared to the Graham--Rothschild case, we have added an extra variable symbol $x_0$, which we interpret to mean ``undefined''. Let $\FIN^A$ be the set of (total) $f: \N \to A \cup X$ which are eventually constant and equal to $x_0$.

For $f,g \in \FIN^A$, the sum $f+g$ is defined iff for every $n \in \N$, at least one of $f(n)$, $g(n)$ is $x_0$. In this case, $f+g$ is defined as
\begin{equation*}
	(f+g)(n) = \begin{cases}
		f(n) & \text{if } f(n) \neq x_0 \\
		g(n) & \text{if } f(n) = x_0
	\end{cases}
\end{equation*}
Under this operation, and the layering map $\ell(f) = \max\{ k: f(n)=x_k \text{ for some } n \}$, $\FIN^A$ is a commutative, adequate, partial layered semigroup.

Call a (total) map $F: \N \to A \cup X$ \textit{strong} if $F(0)=x_0$, and all $x_i$ appear as values of $F$ in increasing order (i.e.\ whenever $F(n)=x_i$, $F(m)=x_j$ for $n \leq m$, then $i \leq j$). The strong maps act on $\FIN^A$ by ``composition'':
\begin{equation*}
	\tilde{F}[f](n) = \begin{cases}
		F(k) & \text{if } f(n) = x_k \text{ for some } k \geq 1 \\
		f(n) & \text{if } f(n) \in A \cup \{ x_0 \}
	\end{cases}
\end{equation*}
Every strong $\tilde{F}$ is a regressive map, and the collection $\F$ of all such strong maps is locally finite, and closed under composition.

Identifying ``undefined'' with the symbol $x_0$, the semigroup $L^A$ of located words over $A$ is a subset of $\FIN^A$. As this containment is strict, we need to strengthen Theorem \ref{thm:ram1} to ensure that the sequence can be found inside a specified subsemigroup $T \subseteq \FIN^A$. Henceforth, we assume $T$ is layerwise adequate, and closed under all strong maps. We define $\Pi_T \defeq \prod_{i=0}^\infty \g T_i$, and say that $T$ is $\F$-Ramsey if $\Pi_T$ contains an $\F$-Ramsey element. Theorem \ref{thm:ram1} admits the following strengthening:

\begin{theorem}\label{thm:fin-a}
	Suppose $T \subseteq \FIN^A$ is $\F$-Ramsey, and $n \in \N$. Then, for any finite colouring of $\FIN^A$, there exists a block sequence $(f_i)_{i=1}^\infty \subseteq T_n \defeq T \cap \FIN^A_n$ such that for every $k \leq n$, the following set is monochromatic:
	\begin{equation*}
		\FIN^A_k\ \cap\ \big\{\ \tilde{F}_1(f_{n_1}) + \cdots + \tilde{F}_\ell(f_{n_\ell})\ :\ n_1 < \cdots < n_\ell,\ \tilde{F}_1, \ldots, \tilde{F}_\ell \in \F\ \big\}
	\end{equation*}
\end{theorem}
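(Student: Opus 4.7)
The plan is to follow the proof of Theorem \ref{thm:ram1} almost verbatim, making only one structural change: each bounded existential quantifier that, in the original proof, ranges over $\hS_n = \h\FIN^A_n$ is replaced by one ranging over $\h T_n$. Since the input to the construction is now an $\F$-Ramsey sequence $(\alpha_i) \in \Pi_T$ with each $\alpha_i \in \g T_i$, the element $\alpha_n$ still serves as a witness to the modified existential, and transfer will deliver a sequence $(x_j) \subseteq T_n$ rather than merely in $\FIN^A_n$.

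More explicitly, given the finite colouring $c$ of $\FIN^A$, I would define $A_i = \{\, x \in \FIN^A_i : c(x) = (\h c)(\alpha_i)\,\}$ and $A = \bigcup_{i \in \N} A_i$, exactly as in Theorem \ref{thm:ram1}. The inductive construction of the $x_j$ then proceeds identically: at stage $j$, one verifies that $\alpha_n$ witnesses an elementary sentence of the form $\exists\ \tau \in \h T_n$ such that, for every earlier partial sum $\sigma = f_1(x_{n_1}) + \cdots + f_\ell(x_{n_\ell})$ and every $f, f' \in \F_n$, the expressions $\sigma + f(\tau)$ and $\sigma + f(\tau) + \hf'(\alpha_n)$ are defined and lie in $\hA$ and $\h\hA$ respectively. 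The three key inputs are unchanged from the original: $\F$-coherence ($f(\alpha_n) \sim \alpha_k$ for some $k \leq n$, putting each sum in the correct colour class $A_k$); the Ramsey property ($\alpha_i + \halpha_j \sim \alpha_i$ for $j \leq i$, collapsing multi-term sums up to $u$-equivalence); and $\alpha_n \in \g T_n$ together with closure of $T$ under $\F$, which guarantees every sum in the conjunction is actually defined. Transfer then yields $x_j \in T_n$ with the analogous standard properties, and one iterates.

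The only obstacle is bookkeeping rather than any genuinely new idea: one must confirm that restricting the quantifier to $\h T_n$ still respects every clause in the conjunction. For definedness this uses closure of $T$ under $\F$ (so $f(\tau) \in \h T$) and layerwise adequacy of $T$ (so $\g T_n$ is nonempty and $\alpha_n \in \g T_n$ forces $\xi + \halpha_n$ defined for $\xi \in \h T_n$, by transfer of the formula witnessing $\alpha_n \in \g T_n$). For membership in $\hA$ there is nothing new to check, since $A \subseteq \FIN^A$ enters only as a parameter and the relevant sums automatically lie in $\h\FIN^A$. Consequently no argument beyond the one already used to prove Theorem \ref{thm:ram1} is needed, and the generalisation is essentially cost-free once the correct subsemigroup-valued witness is in hand.
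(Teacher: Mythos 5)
Your proposal is correct and takes essentially the same approach as the paper, whose entire proof reads ``As for Theorem \ref{thm:ram1}, but replace all occurrences of $S_n$ with $T_n$.'' The extra bookkeeping you supply---that $\alpha_n \in \g T_n$ still witnesses the transferred sentences, using layerwise adequacy of $T$ and its closure under $\F$ for definedness---is precisely the verification the paper leaves implicit.
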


\begin{proof}
	As for Theorem \ref{thm:ram1}, but replace all occurrences of $S_n$ with $T_n$.
\end{proof}

The question remains---when is $T$ $\F$-Ramsey?

\begin{definition}
	A subsemigroup $T \subseteq \FIN^A$ is \textit{complete} if for any $\alpha \in \g T_1$ and $k \in \N$, we have $\alpha + \halpha[x_2] + \cdots + \khalpha{(k-1)}[x_k] \in \kh{k}T$, where $\alpha[b]$ is the function obtained by setting $\alpha[b](n) = b$ whenever $\alpha(n) = x_1$.
\end{definition}

\begin{lemma}\label{lem:compl}
	Every complete $T \subseteq \FIN^A$ is $\F$-Ramsey.
\end{lemma}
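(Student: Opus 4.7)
The plan is to invoke Theorem \ref{thm:coh->ram}, so the task reduces to exhibiting an $\F$-coherent sequence $(\alpha_i)_{i \in \N} \in \Pi_T$. First I would fix a $\preccurlyeq$-minimal $u$-idempotent $\omega \in \g T_1$ via Corollary \ref{cor:u-id-min} applied to $\g T_{\leq 1}$ (with $\g T_1$ an ideal, so Lemma \ref{lem:id-below} places a minimal idempotent there), together with a $u$-idempotent $\nu \in \g T_0$ satisfying $\omega \preccurlyeq \nu$. Repeating the argument used in Lemma \ref{lem:hj-ns} then gives $\omega[a] \sim \nu$ for every $a \in A$.

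The definition of completeness lets me set
\[
\gamma_k \defeq \omega + \h\omega[x_2] + \cdots + \kh{(k-1)}\omega[x_k] \in \kh{k}T_k
\]
for each $k \geq 1$, with $\gamma_0 \defeq \nu$. Each $\gamma_k$ induces an ultrafilter on $T$ via $\{ A \subseteq T : \gamma_k \in \kh{k}A \}$, and by the standard realisation $\beta T \cong \h T /{\sim}$ this is represented by some $\alpha_k \in \h T$. The layer and cofiniteness data inherited from $\omega \in \g T_1$ force $\alpha_k \in \g T_k$, so $(\alpha_i) \in \Pi_T$.

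To verify $\F$-coherence, I would fix a strong $\tilde F \in \F$; since $\tilde F$ is a homomorphism, $\tilde F(\gamma_k) = \sum_{i=1}^k \kh{(i-1)}\omega[F(i)]$. Strongness of $F$ forces the $F(i)$ lying in $X \setminus \{x_0\}$ to form a weakly increasing sequence whose distinct nonzero values are $x_1, \ldots, x_j$ for some $j \leq k$, and this $j$ is the layer of $\tilde F(\gamma_k)$. Iterated $u$-idempotence $\kh{a}\omega[x_m] + \kh{b}\omega[x_m] \sim \kh{a}\omega[x_m]$ (for $a < b$) collapses each group of summands sharing a target variable, while $\omega \preccurlyeq \nu$ together with $\omega[a] \sim \nu$ absorbs the $A$- and $x_0$-contributions into adjacent $\omega$-terms. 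The surviving expression has shape $\kh{b_1}\omega[x_1] + \cdots + \kh{b_j}\omega[x_j]$ with $b_1 < \cdots < b_j$, which Proposition \ref{prop:dinasso}(iv),(v) lets me renormalise to $\gamma_j$ at the level of ultrafilters on $T$; this yields $\tilde F(\alpha_k) \sim \alpha_j$.

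The main obstacle I anticipate is the bookkeeping in the last paragraph: $u$-equivalence is declared at the level of ultrafilters on $T$, while the arithmetic of $\gamma_k$ naturally unfolds in iterated star-extensions, so every collapse must be justified by transfer at the appropriate level; the $A$- and $x_0$-contributions lying between $\omega$-terms (rather than at the boundary of the sum) require especially careful absorption arguments.
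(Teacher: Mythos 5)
Your proof follows the same route as the paper's: obtain $\nu \in \g T_0$ and $\omega \in \g T_1$ with $\omega \preccurlyeq \nu$ and $\omega[a] \sim \nu$ as in Lemma \ref{lem:hj-ns}, use completeness to form the telescoping sums $\omega[x_1] + \h\omega[x_2] + \cdots + \kh{(k-1)}\omega[x_k]$ exactly as in Theorem \ref{thm:gr-str-f-coh}, verify $\F$-coherence, and conclude via Theorem \ref{thm:coh->ram}; you in fact supply more of the coherence bookkeeping than the paper, which simply cites the argument of Theorem \ref{thm:gr-str-f-coh}. One slip to correct: you place the $\preccurlyeq$-minimality on $\omega \in \g T_1$, but the step $\omega[a] \sim \nu$ needs $\nu$ to be a $\preccurlyeq$-minimal idempotent of $\g T_0$ --- the right order is to pick minimal $\nu$ first, then use Lemma \ref{lem:id-below} with the closed left ideal $\g T_1$ of $\g T_{\leq 1}$ to find an idempotent $\omega \preccurlyeq \nu$ there, so that $\omega[a] \preccurlyeq \nu[a] = \nu$ upgrades to $\sim$ by minimality of $\nu$ (and this same minimality is what absorbs the $x_0$-substitution alongside the $A$-substitutions). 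With that ordering fixed, your argument is the paper's.
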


\begin{proof}
	Pick $u$-idempotent $\nu \in \g T_0$, $\om \in \g T_1$ such that $\om \preccurlyeq \nu$ and $\om[a] \sim \nu$ for every $a \in A$ (see Lemma \ref{lem:hj-ns}). Since $T$ is complete, the argument in Theorem \ref{thm:gr-str-f-coh} works, giving an $\F$-coherent in $\Pi_T$. Theorem \ref{thm:coh->ram} now shows $T$ is $\F$-Ramsey, by restricting to $\Pi_T$ rather than $\Pi_S$.
\end{proof}

Note that $\FIN^A$ is trivially complete, hence it is $\F$-Ramsey. Applying Theorem \ref{thm:fin-a} with $A = \varnothing$ and $T = \FIN^\varnothing = \FIN$ gives Gowers' theorem (Corollary \ref{cor:gow}, where we interpret each variable symbol $x_k$ as the number $k \in \N$). $L_A \subseteq \FIN^A$ is also complete, and Theorem \ref{thm:fin-a} with $T = L_A$ gives the Bergelson--Blass--Hindman theorem for strong variable words (Corollary \ref{cor:solecki}, interpreting $x_0$ to mean ``undefined'').

\begin{remark}
	Theorem \ref{thm:fin-a} doesn't recover the full strength of Corollary \ref{cor:solecki}---we could attempt to do so by considering a wider class $\F^+$ of regressive maps $F: \N \to A \cup X$, where only the \textit{first} occurrences of each $x_i$ must appear in increasing order. It is unclear whether $\FIN^A$ is $\F^+$-Ramsey. If it were, we would expect that the argument to construct an $\F^+$-coherent would be more complicated, as per Remark \ref{rmk:gr-ram}. This would give an even more generalised version of Gowers' theorem, where some layers of a function $f \in \FIN$ can be ``reversed''.
\end{remark}

In the same way as Lupini \cite[\S 3]{lupiniGowersRamseyTheorem2017} and Dodos--Panellopoulos \cite[Thm 2.21]{dodosRamseyTheoryProduct2016}, we can generalise Theorem \ref{thm:fin-a} to cover the Milliken--Taylor theorem. For $m \in \N$, let $\FIN^{A[m]}$ be the collection of block sequences $(f_1,\ldots,f_m)$ of elements of $\FIN^A$.

\begin{theorem}
	Suppose $T \subseteq \FIN^A$ is $\F$-Ramsey, and $n \in \N$. Then, for any finite colouring of $\FIN^A$, there exists a block sequence $(x_i)_{i=1}^\infty \subseteq T_n$ such that for every $k \leq n$, the following set is monochromatic:
	\begin{align*}
		\bigg\{\ \Big( \tilde{F}_1(x_{n_1}) + &\cdots + \tilde{F}_{\ell_1}(x_{n_{\ell_1}}),\ \ldots,\ \tilde{F}_{\ell_{m-1}+1}(x_{n_{\ell_{m-1}+1}}) + \cdots + \tilde{F}_{\ell_m}(x_{n_{\ell_m}}) \Big) :\\
		&0<\ell_1 < \cdots < \ell_m; \ n_1 < \cdots < n_{\ell_m};\ \tilde{F}_1, \ldots, \tilde{F}_{\ell_m} \in \F\ \bigg\}\ \cap\ \FIN^{A[m]}_k
	\end{align*}
\end{theorem}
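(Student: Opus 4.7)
The plan is to adapt the inductive transfer argument of Theorem~\ref{thm:ram1} to this multivariable Milliken--Taylor setting. Since $T$ is $\F$-Ramsey, Theorem~\ref{thm:coh->ram} supplies an $\F$-Ramsey sequence $(\alpha_i)_{i<\delta} \in \Pi_T$. Given the finite colouring with classes $A^{(1)}, \ldots, A^{(r)} \subseteq \FIN^{A[m]}$, each layer vector $\vec{k} = (k_1,\ldots,k_m) \in \{0,\ldots,n\}^m$ determines a ``prototype colour'' $c(\vec{k}) \in \{1,\ldots,r\}$, namely the unique $j$ such that
\begin{equation*}
  \big(\alpha_{k_1},\ \halpha_{k_2},\ \ldots,\ \khalpha{(m-1)}_{k_m}\big)\ \in\ \kh{(m-1)}\hspace{-0.15em}A^{(j)}.
\end{equation*}
Well-definedness of $c(\vec{k})$ follows from iterated transfer applied to the finite partition, exactly as in the $m=1$ case.

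Next, build $(x_i)_{i=1}^\infty \subseteq T_n$ inductively in the style of the proof of Theorem~\ref{thm:ram1}. At stage $j$, the goal is to choose $x_j \in T_n$ such that every $m$-tuple formed from $x_1,\ldots,x_j$, by selecting split points $0 < \ell_1 < \cdots < \ell_m$, indices $n_1 < \cdots < n_{\ell_m} \leq j$ with $j \in \{n_1,\ldots,n_{\ell_m}\}$, and maps $\tilde{F}_1,\ldots,\tilde{F}_{\ell_m} \in \F$, lies in the colour class $A^{(c(\vec{k}))}$ for its layer vector $\vec{k}$. The witness for the corresponding existence statement in $\hS_n$ is $\alpha_n$ itself: $\F$-coherence guarantees that each $\tilde{F}_s(\alpha_n)$ is $u$-equivalent to some $\alpha_{k_s}$ with $k_s \leq n$, while $u$-idempotence of the $\alpha_i$'s ensures that adjoining trailing copies $\khalpha{(s-1)}_n$ to the $s$-th block of the tuple preserves $u$-equivalence and hence the colour class. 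Local finiteness of $\F$ makes the requirement on $x_j$ a finite conjunction of elementary assertions, so the transfer principle delivers the required $x_j \in T_n$.

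The main obstacle will be the bookkeeping across the $m$ blocks: when a partial $m$-tuple is ``completed'' by attaching trailing copies of $\alpha_n$ at iterated star levels to each of the $m$ blocks, one must verify that the colour of the completed tuple really matches $c(\vec{k})$, where $\vec{k}$ is the layer vector of the completed tuple. This is where the $m$-fold iterated nonstandard extension plays its role: the $s$-th block absorbs $\khalpha{(s-1)}_n$, in direct analogy with the role of $\halpha_n$ in Theorem~\ref{thm:ram1}, but now iterated $m$ times. Once this absorbing-by-blocks bookkeeping is handled and the elementary formula at stage $j$ is written out explicitly, the remainder of the argument---namely the transfer step producing $x_j$ and the final verification that $(x_i)$ satisfies the required monochromaticity---proceeds by direct analogy with the $m=1$ case.
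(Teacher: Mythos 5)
Your proposal takes essentially the same route as the paper, whose proof is a direct appeal to the iterated-star Milliken--Taylor argument of Di Nasso--Goldbring--Lupini (Thm 8.16): fix an $\F$-Ramsey sequence, assign each layer vector a prototype colour via joint membership of the tuple $\big(\alpha_{k_1}, \halpha_{k_2}, \ldots, \kh{(m-1)}\hspace{-0.15em}\alpha_{k_m}\big)$ in the appropriate iterated star-extension of a colour class, and extract the block sequence by the same inductive transfer as in Theorem~\ref{thm:ram1}. The one point to keep explicit is that the absorption step must preserve the \emph{joint} $u$-equivalence class of the whole $m$-tuple (i.e.\ membership in the iterated star-extension of $A^{(j)}$), not merely componentwise $u$-equivalence of the blocks; your prototype-colour setup already encodes this correctly, so the argument goes through.
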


\begin{proof}
	Identical to \cite[Thm 8.16]{dinassoNonstandardMethodsRamsey2019}.
\end{proof}

Theorem \ref{thm:fin-a} is the case $m=1$, while the Milliken--Taylor theorem is the case $A=\varnothing$, $T = \FIN^\varnothing = \FIN$, $n=1$.

\section{Closing remarks}

This paper outlines a general framework for proving Ramsey-type results about layered semigroups. We have seen in \S\ref{sec:appl} and \S\ref{sec:lv-words} that many old results in Ramsey theory can be expressed and proven via our framework. Since layered semigroups form a rich class of structures, we also expect that one could use this framework to prove new Ramsey-theoretic results. Unfortunately, we are not aware of any immediate applications to known open problems.

\begin{problem}
    Use the framework of this paper to prove new Ramsey-theoretic results of interest.
\end{problem}

We note that there are still Ramsey statements about semigroups which we have not been able to recover. In particular, we saw that van der Waerden's theorem is a corollary of the Graham--Rothschild theorem, which can be proven in our framework, but a direct formulation remains elusive. We could attempt to obtain it from Theorem \ref{thm:ram2} by letting $S = \N^+ \times \{0,1\}$ and $\ell: (n,i) \mapsto i$. Then, consider maps $f_c: S \to S_0$, where $f_c: (n,0) \mapsto (n,0)$, $(m,1) \mapsto (cm,0)$ for all $c$ up to some fixed $k$. Taking the operation $+$ on $S_0$ to be usual addition, Theorem \ref{thm:ram2} obtains the structure of van der Waerden's theorem. However, it may not be possible to extend $+$ to $S$ so that $(S,\ell)$ is layered and the $f_c$ are homomorphisms.

\begin{question}
	Is there a layered semigroup $S$, and regressive maps $\F$ on $S$, such that Theorem \ref{thm:ram1} or Theorem \ref{thm:ram2} reduce to van der Waerden's theorem?
\end{question}

Another result notably missing is Ramsey's theorem itself. The most sensible attempt seems to be taking $S = \Pow_\text{fin}(\N) = \{ F \subseteq \N: F \text{ finite} \}$, and layering it by $\ell: F \mapsto \abs{F}$. Then, given a sequence $(F_i)_{i=1}^\infty$ of finite sets, an infinite homogeneous set $B$ could be obtained by $B = \bigcup_{i=1}^\infty F_i$. The challenge is how to define a semigroup operation and regressive maps that allow us to generate any finite subset of $B$.

\begin{question}
	Is there a layered semigroup $S$, and regressive maps $\F$ on $S$, such that Theorem \ref{thm:ram1} or Theorem \ref{thm:ram2} reduce to Ramsey's theorem?
\end{question}

Definition \ref{defn:regr} imposed strong conditions on the types of maps considered, particularly conditions (iii) and (iv). While all examples considered satisfied all these conditions, perhaps a different argument is possible which doesn't require these conditions. This would require a new construction in the proof, since conditions (iii) and (iv) are necessary to Lemma \ref{lem:seq-sum}.

\begin{question}
	Can Theorem \ref{thm:coh->ram} be proven when conditions (iii) and/or (iv) are weakened or removed from Definition \ref{defn:regr}?
\end{question}

Our framework allows us to deduce Ramsey statements about $(S,\F)$ from the existence of an $\F$-coherent in $S$. However, we have not found a general way to construct $\F$-coherents in arbitrary layered semigroups $S$, or ensure they exist. The most general construction we have given of an $\F$-coherent is Lemma \ref{lem:compl}, but this still depends crucially on the structure of $\FIN^A$. Farah--Hindman--McLeod \cite[Thm 3.8]{farahPartitionTheoremsLayered2002} give a fairly general construction, but it requires very strong conditions on $\F$, which do not hold in many natural examples.

\begin{problem}
	Find a general way to construct an $\F$-coherent in a layered semigroup $S$, making as few assumptions about $S$ and $\F$ as possible.
\end{problem}

We did not present a construction of an $\F$-coherent in the Graham--Rothschild semigroup $W,$ instead deferring to a result of Carlson--Hindman--Strauss \cite[Thm 2.12]{carlsonInfinitaryExtensionGraham2006}. They directly construct a $\preccurlyeq$-minimal $\F$-Ramsey, and as a result, their argument is extremely complicated. Morally, if we only need to construct an $\F$-coherent, there should be a simpler argument---then our framework would imply the Graham--Rothschild theorem.

\begin{problem}
	Find a simpler construction of an $\F$-coherent in the Graham--Rothschild semigroup $W.$
\end{problem}

Our results generalise many Ramsey-type or partition results on layered semigroups. However, there is another thread of Ramsey theory dealing with \textit{density} results, having the form that any set $A \subseteq \M$ of positive ``density'' contains a substructure $\Nn \subseteq \M$ with certain properties. Often, we take $\M = \N$ and interpret density to mean \textit{upper density}; $d(A) = \limsup_{n \to \infty} \abs{A \cap [0,n)} / n$ for each $A \subseteq N$. For example, Szemer\'edi's theorem is the density version of van der Waerden's theorem:

\begin{theorem}[Szemer\'edi]
	For any $k \in \N$ and $A \subseteq \N$ with $d(A)>0$, there are $a,d \in \N^+$ such that the arithmetic progression $\{ a+cd: 0 \leq c < k \} \subseteq A$.
\end{theorem}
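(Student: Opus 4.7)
The plan is to prove Szemer\'edi's theorem via Furstenberg's correspondence principle, reducing it to a multiple-recurrence statement in ergodic theory. First, I would invoke Furstenberg's correspondence: viewing $A$ as a subset of $\mathbb{Z}$ (extending by the empty set on negatives), set $x_A = \mathbf{1}_A \in \{0,1\}^{\mathbb{Z}}$, let $T$ be the shift, and take $X$ to be the orbit closure of $x_A$. Choosing $n_i \to \infty$ along which $\abs{A \cap [0,n_i)}/n_i \to d(A)$, the empirical measures $\frac{1}{n_i} \sum_{j=0}^{n_i - 1} \delta_{T^j x_A}$ have a weak$^*$ subsequential limit $\mu$ on $X$ by compactness. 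This $\mu$ is $T$-invariant, and the cylinder $B = \{y \in X : y(0) = 1\}$ satisfies $\mu(B) = d(A) > 0$, with the correspondence inequality
\[
    d\big(A \cap (A - n_1) \cap \cdots \cap (A - n_k)\big) \ \geq \ \mu\big(B \cap T^{-n_1}B \cap \cdots \cap T^{-n_k}B\big)
\]
for any $n_1, \ldots, n_k \in \mathbb{Z}$, which follows from unpacking the construction of $\mu$.

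Second, I would apply Furstenberg's multiple-recurrence theorem to $(X,\mu,T)$ and $B$: since $\mu(B) > 0$, there exists $d \geq 1$ such that
\[
    \mu\big(B \cap T^{-d}B \cap T^{-2d}B \cap \cdots \cap T^{-(k-1)d}B\big) \ > \ 0.
\]
Combining with the correspondence inequality applied to $n_c = cd$ for $1 \leq c \leq k-1$ yields
\[
    d\big(A \cap (A - d) \cap (A - 2d) \cap \cdots \cap (A - (k-1)d)\big) \ > \ 0,
\]
so in particular this intersection is nonempty. Picking any $a$ in it gives $a + cd \in A$ for $0 \leq c < k$; a translation argument (using that positive density is preserved under shifts, and that the intersection is infinite) ensures $a \in \N^+$.

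The main obstacle, and indeed the heart of the matter, is Furstenberg's multiple-recurrence theorem itself, which is logically equivalent to Szemer\'edi's theorem and requires substantial ergodic-theoretic machinery---most notably Furstenberg's structure theorem decomposing an arbitrary measure-preserving $\mathbb{Z}$-system as a tower of compact and weakly mixing extensions, together with a delicate lifting of the recurrence statement through each rung. This machinery lies well outside the layered-semigroup framework developed in this paper, because density Ramsey theory requires genuinely measure-theoretic input rather than pigeonhole-type partition arguments, and there is no known ``$\F$-coherent'' style construction that yields Szemer\'edi's theorem. As the closing remarks of this paper observe, incorporating density results into the coherent-sequence framework would be a substantial extension beyond the present work, rather than a routine application of Theorems \ref{thm:ram1} or \ref{thm:ram2}.
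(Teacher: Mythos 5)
The paper does not prove this statement, so there is no ``paper's proof'' to compare yours against: Szemer\'edi's theorem is quoted in the closing remarks purely as a known classical result, to motivate the open problem of developing a density analogue of the layered-semigroup framework. The author explicitly concedes that density results of this kind lie outside the partition-regularity machinery of Theorems \ref{thm:ram1}--\ref{thm:ram2}, which is exactly the point you make in your final paragraph. So your diagnosis of the situation is accurate.

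As a proof, however, your proposal is a reduction rather than an argument: everything of substance is deferred to Furstenberg's multiple recurrence theorem, which (as you yourself note) is logically equivalent to Szemer\'edi's theorem. Unless that theorem is admissible as a black-box citation, you have proved nothing; if it is admissible, then the correspondence-principle scaffolding you describe is the standard and essentially correct way to pass between the two statements. Within that scaffolding the details are right, with two small points worth tightening. First, the weak$^*$ limit $\mu$ of the empirical measures $\frac{1}{n_i}\sum_{j<n_i}\delta_{T^j x_A}$ is $T$-invariant only because the difference between the averages at $x_A$ and at $Tx_A$ telescopes to $O(1/n_i)$; and one should pass to a further subsequence so that the measures actually converge and so that $\mu(B)\geq d(A)$ (the correspondence inequality then bounds the \emph{upper} density of $A\cap(A-d)\cap\cdots\cap(A-(k-1)d)$ from below, which is all you need). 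Second, the final step is simpler than you suggest: a set of positive upper density is infinite, so it contains some $a\geq 1$ directly; no translation argument is required. None of this changes the verdict that the genuine mathematical content---the structure theorem for measure-preserving systems and the lifting of recurrence through compact and weakly mixing extensions---is being cited, not supplied.
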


By defining a suitable notion of density on $W,$ Furstenberg and Katznelson obtained a density version of the Hales--Jewett theorem \cite{furstenbergDensityVersionHales1991}. Nonstandard methods have also been applied successfully to prove density-type results \cite[Part III]{dinassoNonstandardMethodsRamsey2019}. Thus, it may be possible to develop a similar framework for proving density theorems in layered semigroups. This would require a suitable notion of density on layered semigroups---McLeod has already generalised some combinatorial notions of size in $\N$ to this setting \cite{mcleodNotionsSizePartial2000}.

\begin{problem}
	Develop an analogous framework for proving density results on layered semigroups.
\end{problem}

Variations on the basic structure of Theorem \ref{thm:ram1} also appear in Ramsey theory. For example, take the Graham--Rothschild semigroup $W,$ and let $LV \subseteq W$ be the subset consisting of \textit{left-variable words}---those whose first character is the variable $x_1$. The \textit{Hales--Jewett theorem for left-variable words} states:

\begin{theorem}[{\cite[Theorem 2.37]{todorcevicIntroductionRamseySpaces2010}}]
	For any finite colouring of $W_0 = A^{<\omega}$, there exists a word $w \in W_0$ and a sequence $(u_i)_{i=0}^\infty \subseteq LV \cap W_1$ of left-variable words such that the following set is monochromatic:
	\begin{equation*}
		\big\{\ w \ct u_{n_1}[a_1] \ct \cdots \ct u_{n_\ell}[a_\ell]\ :\ n_1 < \cdots < n_\ell,\ a_1, \ldots a_\ell \in A\ \big\}
	\end{equation*}
\end{theorem}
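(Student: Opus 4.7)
The plan is to adapt the approach of Lemma \ref{lem:hj-ns} and Theorem \ref{thm:gr-str-f-coh} to the left-variable setting, producing nonstandard idempotents with appropriate absorption and then building $w$ and $(u_i)$ by iterated transfer as in the proof of Theorem \ref{thm:ram1}. First, pick a $\preccurlyeq$-minimal $u$-idempotent $\nu \in \h W_0$ via Corollary \ref{cor:u-id-min}. Since $LV \cap W_1$ is closed under right-concatenation by $W_0$, the set $\h(LV \cap W_1) \ct \h\nu$ is a closed $u$-subsemigroup of $\h(LV \cap W_1)$, and Lemma \ref{lem:u-id} yields a $u$-idempotent $\omega \in \h(LV \cap W_1)$ with $\omega \ct \h\nu \sim \omega$.

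The core step is to derive $\nu \ct \h\omega[a] \sim \nu$ for every $a \in A$. Let $\pi' \in \h W_1$ satisfy $\pi' \sim \nu \ct \h\omega$. A direct computation using $\omega \ct \h\nu \sim \omega$, the $u$-idempotences of $\omega$ and $\nu$, and Proposition \ref{prop:dinasso} shows $\pi'$ is a $u$-idempotent with $\pi' \ct \h\nu \sim \pi' \sim \nu \ct \h\pi'$, hence $\pi' \preccurlyeq \nu$. Applying the continuous substitution homomorphism $\sigma_a : \alpha \mapsto \alpha[a]$ gives $\pi'[a] \sim \nu \ct \h\omega[a]$, a $u$-idempotent with $\pi'[a] \preccurlyeq \nu$; the $\preccurlyeq$-minimality of $\nu$ then forces $\pi'[a] \sim \nu$. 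An analogous computation in the $u$-semigroup yields the chain absorption $\omega[a] \ct \h\omega[b] \sim \omega[a]$ for all $a, b \in A$, factoring through $\omega[a] \ct \h\nu \ct \h\h\omega[b]$ and using $\nu \ct \h\omega[b] \sim \nu$.

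Set $r_0 := c(\nu)$ and $A_{r_0} := c^{-1}(r_0)$, so $\nu \in \h A_{r_0}$. Then $\nu$ witnesses the elementary formula ``$\exists y \in \h W_0:\ y \in \h A_{r_0}\ \wedge\ \bigwedge_{a \in A} y \ct \h\omega[a] \in \h\h A_{r_0}$,'' and transfer yields a standard $w \in W_0 \cap A_{r_0}$ with $w \ct \omega[a] \in \h A_{r_0}$ for every $a \in A$. Proceeding inductively, at stage $i+1$ the element $\omega$ witnesses an analogous finite existential in $\h(LV \cap W_1)$ encoding each condition $w \ct u_{n_1}[a_1] \ct \cdots \ct u_{n_{\ell-1}}[a_{\ell-1}] \ct \tau[a_\ell] \in \h A_{r_0}$ for $n_1 < \cdots < n_{\ell-1} \leq i$ and $a_j \in A$; the chain absorption $\omega[a] \ct \h\omega[b] \sim \omega[a]$ ensures each such membership holds nonstandardly, and transfer produces the standard $u_{i+1} \in LV \cap W_1$, giving the desired infinite sequence.

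The principal obstacle is the derivation $\nu \ct \h\omega[a] \sim \nu$: since $\h(LV \cap W_1)$ is not a left ideal of $\h W$, we cannot invoke Lemma \ref{lem:id-below} directly to obtain a $\preccurlyeq$-comparable idempotent inside $\h(LV \cap W_1)$, and must instead introduce the auxiliary element $\pi' \sim \nu \ct \h\omega \in \h W_1$, verify its absorption and idempotence properties by hand, and descend via $\sigma_a$ before exploiting minimality of $\nu$. Once this identity is established, the inductive construction of $w$ and $(u_i)$ is a direct adaptation of the argument in the proof of Theorem \ref{thm:ram1}, with $\nu$ playing the role of the nonstandard witness in the base step and $\omega$ in each inductive step.
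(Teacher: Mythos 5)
Your proposal is correct, but there is no proof in the paper to compare it against: the theorem appears only in the closing remarks, quoted from Todor\v{c}evi\'{c}, precisely in order to motivate the \emph{open} problem of extending the coherent/Ramsey framework to ``right layered semigroups''. What you have written is a direct nonstandard rendering of the classical idempotent argument, and it works. The one genuinely delicate point is the one you single out: since $LV \cap W_1$ is a right ideal rather than a left ideal of $W_0 \cup W_1$, Lemma \ref{lem:id-below} cannot be invoked to produce an idempotent of $\h(LV \cap W_1)$ lying $\preccurlyeq$-below $\nu$, and indeed none need exist there. Your substitute --- take any $u$-idempotent $\omega$ in the compact $u$-subsemigroup $\h(LV \cap W_1) \ct \h\nu$ (which automatically satisfies $\omega \ct \h\nu \sim \omega$, by idempotence of $\nu$), pass to the auxiliary idempotent $\pi' \sim \nu \ct \h\omega$ with $\pi' \ct \h\nu \sim \nu \ct \h\pi' \sim \pi'$, and push it into $\h W_0$ via the substitution homomorphism $\sigma_a$ before invoking $\preccurlyeq$-minimality of $\nu$ --- is exactly the right fix, and the computations establishing $\nu \ct \h\omega[a] \sim \nu$ and the chain absorption $\omega[a] \ct \h\omega[b] \sim \omega[a] \ct \h\nu \ct \h\h\omega[b] \sim \omega[a]$ all check out using Proposition \ref{prop:dinasso} and transfer. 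The concluding induction is then a faithful copy of the proof of Theorem \ref{thm:ram1}, with $\nu$ supplying the fixed word $w$ and $\omega$ supplying each $u_i$; the needed nonstandard memberships at each stage follow from $\nu \ct \h\omega[a] \sim \nu$ (base step) and the chain absorption (inductive step), since left multiplication by a standard element preserves $u$-equivalence. The only caveat is that, while this proves the theorem in the paper's nonstandard language (essentially reproducing the cited argument of Todor\v{c}evi\'{c}), it does not answer the problem the paper actually raises, namely exhibiting the result as an instance of Theorems \ref{thm:ram1} and \ref{thm:coh->ram} for a suitable notion of right $\F$-coherence.
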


Effectively, we ensure that all products of subsequences have the same nonvariable part. $LV \subseteq W$ is a right ideal, so we could generalise this idea to \textit{right layered semigroups} $(S,\ell,R)$, with $R \subseteq S$ a distinguished right ideal meeting every layer except $S_0$. Then, we instead consider sequences in $\Lambda_S \defeq \hS_0 \times \prod_{1 \leq i < \delta} \h\hspace{-0.15em} R_i$, and define analogous notions of right $\F$-coherent and right $\F$-Ramsey sequences. Further investigation is required to see if the proofs of Theorems \ref{thm:ram1} and \ref{thm:coh->ram} translate to this setting. Problems may arise in translating the proof of Theorem \ref{thm:coh->ram}, if we require conditions such as $\kappa + \hf(\alpha_i) \sim \kappa + \halpha_j$, which are not continuous in $\alpha_i$.

\begin{problem}
	Translate our ideas to the setting of right layered semigroups.
\end{problem}

\section*{Acknowledgements}

Thanks go to:
\begin{itemize}
    \item Martino Lupini, for teaching me all I know about Ramsey theory and nonstandard analysis, and for first noticing the abstract connection between Gowers' theorem and the Graham--Rothschild theorem, eventually leading to the ideas in this paper.
    
    \item Valentino Vito, for thoroughly reading the first draft, and providing invaluable suggestions and corrections.
    
    \item The anonymous reviewer, for their thorough peer review and useful comments.
\end{itemize}


\begin{thebibliography}{99}
%
\bibitem{bergelsonPartitionTheoremsSpaces1994}
Vitaly Bergelson, Andreas Blass, and Neil Hindman. \textit{Partition theorems
for spaces of variable words}. Proceedings of the London Mathematical Society
\textbf{s3-68}\textbf{(3)}, pp. 449–476, 1994.
%
\bibitem{bartosovaGowersRamseyTheorem2017}
Dana Barto\v sov\' a and Aleksandra Kwiatkowska. \textit{Gowers' Ramsey theorem
with multiple operations and dynamics of the homeomorphism group of the Lelek
fan}. Journal of Combinatorial Theory, Series A \textbf{150}, pp. 108–136, 2017.
%
\bibitem{barrettRadoConditionsNonlinear2019}
Jordan Mitchell Barrett, Martino Lupini, and Joel Moreira. \textit{On Rado conditions for nonlinear Diophantine equations.} European Journal of Combinatorics \textbf{94C}, \#103277, 2021. 

\bibitem{carlsonInfinitaryExtensionGraham2006}
Timothy J. Carlson, Neil Hindman, and Dona Strauss. \textit{An infinitary
extension of the Graham–Rothschild parameter sets theorem}. Transactions of the
American Mathematical Society \textbf{358}\textbf{(7)}, pp. 3239–3262, 2006.
%
\bibitem{comfortUltrafiltersOldNew1977}
W. W. Comfort. \textit{Ultrafilters: some old and some new results}. Bulletin of
the American Mathematical Society \textbf{83}\textbf{(4)}, pp. 417–455, 1977.
%
\bibitem{dinassoNonstandardMethodsRamsey2019}
Mauro Di Nasso, Isaac Goldbring, and Martino Lupini. \textit{Nonstandard
Methods in Ramsey Theory and Combinatorial Number Theory}. Lecture Notes
in Mathematics \textbf{2239}, Springer, 2019.
%
\bibitem{dinassoHypernaturalNumbersUltrafilters2015}
Mauro Di Nasso. \textit{Hypernatural numbers as ultrafilters}. In: Peter A. Loeb
and Manfred P. H. Wolff (eds.), \textit{Nonstandard Analysis for the Working
Mathematician}. Springer, 2015.
%
\bibitem{dodosRamseyTheoryProduct2016}
Pandelis Dodos and Vassilis Kanellopoulos. \textit{Ramsey Theory for Product
Spaces}. Mathematical Surveys and Monographs \textbf{212}, American
Mathematical Society, 2016.
%
\bibitem{dinassoRamseyPropertiesNonlinear2018}
Mauro Di Nasso and Lorenzo Luperi Baglini. \textit{Ramsey properties of
nonlinear Diophantine equations}. Advances in Mathematics \textbf{324},
pp. 84–117, 2018.
%
\bibitem{dinassoFermatlikeEquationsThat2018}
Mauro Di Nasso and Maria Riggio. \textit{Fermat-like equations that are not
partition regular}. Combinatorica \textbf{38}\textbf{(5)}, pp. 1067–1078, 2018.
%
\bibitem{farahPartitionTheoremsLayered2002}
Ilijas Farah, Neil Hindman, and Jillian McLeod. \textit{Partition theorems for
layered partial semigroups}. Journal of Combinatorial Theory, Series A
\textbf{98}\textbf{(2)}, pp. 268–311, 2002.%
\bibitem{furstenbergDensityVersionHales1991}
H. Furstenberg and Y. Katznelson. \textit{A density version of the Hales--Jewett
theorem}. Journal d'Analyse Math\' ematique \textbf{57}\textbf{(1)}, pp. 64–119,
1991.
%
\bibitem{farmakiRamseyTheoryMixed}
V Farmaki and S Negrepontis. \textit{Ramsey theory with mixed types of
substitution}. \url{http://users.uoa.gr/~vfarmaki/CV25.pdf}
%
\bibitem{gowersLipschitzFunctionsClassical1992}
W. T. Gowers. \textit{Lipschitz functions on classical spaces}. European Journal
of Combinatorics \textbf{13}\textbf{(3)}, pp. 141–151, 1992.
%
\bibitem{grahamRamseyTheoremParameter1971}
R. L. Graham and B. L. Rothschild. \textit{Ramsey's theorem for n-parameter
sets}. Transactions of the American Mathematical Society \textbf{159},
pp. 257–292, 1971.
%
\bibitem{grahamRamseyTheory1990}
Ronald L. Graham, Bruce L. Rothschild, and Joel H. Spencer. \textit{Ramsey
Theory}. 2nd ed. Wiley, 1990.
%
\bibitem{hindmanFiniteSumsSequences1974}
Neil Hindman. \textit{Finite sums from sequences within cells of a partition of
$\N$}. Journal of Combinatorial Theory, Series A \textbf{17}\textbf{(1)}, pp. 1–11,
1974.
%
\bibitem{hindmanAlgebraStoneCechCompactification2012}
Neil Hindman and Dona Strauss. \textit{Algebra in the Stone--\v Cech Compactification: Theory and Applications}. 2nd ed. De Gruyter, 2012.
%
\bibitem{hindmanCombiningExtensionsHalesJewett2019}
Neil Hindman, Dona Strauss, and Luca Q. Zamboni. \textit{Combining extensions
of the Hales--Jewett theorem with Ramsey theory in other structures}. The
Electronic Journal of Combinatorics \textbf{26}\textbf{(4)}, P4.23, 2019.
%
\bibitem{lupiniGowersRamseyTheorem2017}Martino Lupini. \textit{Gowers' Ramsey theorem for generalized tetris
operations}. Journal of Combinatorial Theory, Series A \textbf{149}, pp. 101–114,
2017.

\bibitem{lupiniTrees} Martino Lupini. \textit{Actions on semigroups and an infinitary Gowers--Hales--Jewett Ramsey theorem}. Transactions of the American Mathematical Society \textbf{371(5)}, pp. 3083--3116, 2019.

%
\bibitem{mcleodNotionsSizePartial2000}
Jillian McLeod. \textit{Some notions of size in partial semigroups}. Topology
Proceedings \textbf{25}, pp. 317–332, 2000.
%
\bibitem{soleckiMonoidActionsUltrafilter2019}
S\l awomir Solecki. \textit{Monoid actions and ultrafilter methods in Ramsey
theory}. Forum of Mathematics Sigma \textbf{7}, e2, 2019.
%
\bibitem{todorcevicIntroductionRamseySpaces2010}
Stevo Todor\v cevi\' c. \textit{Introduction to Ramsey Spaces}. Annals of
Mathematics Studies \textbf{174}, Princeton University Press, 2010.
\end{thebibliography}
\end{document}